\newtheorem{theorem}{Theorem}[section]
\newtheorem{corollary}[theorem]{Corollary}
\newtheorem{lemma}[theorem]{Lemma}
\newtheorem{example}[theorem]{Example}
\theoremstyle{definition}
\newtheorem{definition}[theorem]{Definition}
\newtheorem{remark}[theorem]{Remark}
\numberwithin{equation}{section}
\numberwithin{figure}{section}
\newcommand\Hcal{\mathcal{H}}
\def\Rcal{\mathcal{R}}
\newcommand\rcal{\mathcal{R}}
\newcommand\Fcal{\mathcal{F}}
\def\Ascr{\mathscr{A}}
\newcommand\Cscr{\mathscr{C}}
\newcommand\Oscr{\mathscr{O}}
\newcommand\C{\mathbb{C}}
\newcommand\D{\overline{\mathbb D}}
\renewcommand\D{\mathbb D}
\newcommand\K{\mathbb{K}}
\newcommand\N{\mathbb{N}}
\renewcommand\P{\mathbb{P}}
\newcommand\R{\mathbb{R}}
\newcommand\Z{\mathbb{Z}}
\def\c{\mathbb{C}}
\def\n{\mathbb{N}}
\renewcommand\r{\mathbb{R}}
\def\z{\mathbb{Z}}
\renewcommand\k{\mathbb{K}}
\newcommand\pgot{\mathfrak{p}}
\newcommand\Agot{\mathfrak{A}}
\def\Sgot{\mathfrak{S}}
\newcommand\wt{\widetilde}
\newcommand\wh{\widehat}
\def\dist{\rm{dist}}
\def\length{\rm{length}}
\def\Flux{\mathrm{Flux}}
\begin{document}
	
\fancyhead[LO]{Carleman approximation by minimal surfaces and directed holomorphic curves} 
\fancyhead[RE]{I.\ Castro-Infantes and B.\ Chenoweth}
\fancyhead[RO,LE]{\thepage}

\thispagestyle{empty}

\begin{center}
	{\LARGE \begin{center}
			\textbf{Carleman approximation by conformal minimal immersions and directed holomorphic curves}
		\end{center}}
	
	\vspace*{3mm}
	
	{\large\bf  Ildefonso Castro-Infantes \; and \; Brett Chenoweth}
\end{center}

\begin{quote}
	{\small
	\noindent {\bf Abstract} 	}
		Let $\Rcal$ be an open Riemann surface. 
		In this paper we prove that every continuous function \( M \to \R^n \), $n\ge 3$, defined on a divergent Jordan arc \( M
		\subset 
		\Rcal \) can be approximated in the Carleman sense by conformal minimal immersions; thus providing a new generalization of Carleman's theorem.
		In fact, we prove that this result remains true for null curves and many other classes of directed holomorphic immersions for which the directing variety satisfies a certain flexibility property.
		Furthermore, the constructed immersions may be chosen to be complete or proper under natural assumptions on the variety and the continuous map.
		
		As a consequence we give an approximate solution to a Plateau problem for divergent Jordan curves in the Euclidean spaces.
		
		{\small
			\noindent {\bf Keywords} 	} Minimal surface, Riemann surface, Carleman theorem, directed holomorphic curve, Oka manifold.
		
		{\small
			\noindent {\bf MSC (2010)} 	} 53A10, 53C42, 30E10, 32E30, 32E10, 32H02.
\end{quote}

\section{Introduction}
Carleman's theorem \cite{carleman1927} is a result from the early 20th century that asserts: given a complex valued function \( f\colon \r\to\c \) and a strictly positive function \(\epsilon\colon\r\to\r_+\), there exists an entire function \( g \colon\c\to\c \) such that
\( 
|f(x)-g(x) | < \epsilon(x)
\)
for every $x\in \r$.
Observe that the conclusion of this theorem is better than uniform approximation on $\R$ by entire functions.
In this paper we are interested in whether there are analogues of Carleman's theorem in the context of minimal surface theory; a classical and important area of geometry research.
This is a timely question because  recent  connections between complex analysis and minimal surface theory have seen classical approximation results, in particular those of Runge and Mergelyan, proven in the context of minimal surface theory, see \cite{AlarconLopez2012JDG,AlarconForstneric2014IM}.
In this paper we prove the following.

\begin{theorem}[Carleman theorem for conformal minimal immersions]\label{th:simpleminimal}

	Let \( S \) be a smooth properly embedded image of $\r$ in an open Riemann surface \( \Rcal \). 
	Given a continuous map $X\colon S\to\r^n$, $n\ge 3$, 
	and a positive function $\epsilon\colon S\to\r_+$, there exists a complete conformal minimal immersion $\wt X\colon \Rcal\to \r^n$ such that
	\[
	\| \wt X(p)- X(p) \| < \epsilon(p), \quad p\in S.
	\]
	If in addition \( n \geq 5 \), then we may ensure that \( \wt X \) is also injective.
\end{theorem}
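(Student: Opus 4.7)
The natural strategy is to derive Carleman approximation from a Mergelyan-type theorem for conformal minimal immersions, applied iteratively along a normal exhaustion of $\Rcal$. Fix a smoothly bounded Runge exhaustion $K_0 \subset K_1 \subset K_2 \subset \cdots$ of $\Rcal$ with $K_{j-1} \subset \mathrm{Int}(K_j)$ and $\bigcup_j K_j = \Rcal$, arranged so that $S$ meets each $\partial K_j$ transversally at a single point; then $S_j := S \cap K_j$ is a compact sub-arc and $\bigcup_j S_j = S$ by the proper embeddedness of $S$. Because $\epsilon$ is continuous and strictly positive on a closed subset of $\Rcal$, the quantities
\[
\epsilon_j := \min_{p \in S \cap (K_j \setminus \mathrm{Int}(K_{j-1}))} \epsilon(p) > 0
\]
are well defined, and a telescoping geometric-series argument shows that the Carleman bound $\| \wt X - X \|_S < \epsilon$ follows once one produces a sequence of conformal minimal immersions $X_j$, defined on neighbourhoods of $K_j$, satisfying (a) $\| X_j - X \|_{S \cap (K_j \setminus \mathrm{Int}(K_{j-1}))} < \epsilon_j / 2$ and (b) $\| X_j - X_{j-1} \|_{K_{j-1}} < 2^{-j} \min(\epsilon_1, \ldots, \epsilon_j)$, from which the uniform limit $\wt X := \lim_j X_j$ is the desired conformal minimal immersion $\Rcal \to \r^n$.

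The inductive step is where the main work lies. Given $X_j$ on a neighbourhood of $K_j$, the set $C_{j+1} := K_j \cup (S \cap K_{j+1})$ is an \emph{admissible set} in the sense of Alarc\'on--L\'opez and Alarc\'on--Forstneri\v{c}, namely a Runge compact set with a Jordan arc attached along its boundary. The continuous map that equals $X_j$ on $K_j$ and $X$ on the newly added arc piece is promoted (after a preparation step regularising it along the arc) to a generalised conformal minimal immersion on a neighbourhood of $C_{j+1}$, and the admissible-set Mergelyan theorem for conformal minimal immersions is then invoked to produce $X_{j+1}$ on a neighbourhood of $K_{j+1}$ that is close to $X_j$ on $K_j$ and to $X$ on the new arc in the tolerances (a)--(b). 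The principal obstacle is precisely the simultaneous prescription on $K_j$ and along the attached arc with the arc values being an \emph{arbitrary} continuous map, not merely the restriction of a pre-existing conformal minimal immersion; this is the point at which the Mergelyan machinery has to be pushed into genuinely Carleman-type territory, and where the smooth properly embedded nature of $S$ and the freedom in the choice of exhaustion become essential.

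To obtain completeness of $\wt X$, I would refine the inductive step by a labyrinthine perturbation supported in a thin shell inside $K_j \setminus \mathrm{Int}(K_{j-1})$, designed to push the intrinsic distance from a fixed base point $p_0 \in K_0$ to $\partial K_j$ (with respect to the induced metric of $X_j$) above $j$, while changing $X_j$ on $K_{j-1}$ by less than $2^{-j-1} \min(\epsilon_1, \ldots, \epsilon_j)$; such modifications are available from the Jorge--Xavier / Nadirashvili-type constructions that are by now standard in the Alarc\'on--Forstneri\v{c} program, and they are absorbed into (b). Finally, for injectivity when $n \ge 5$, a transversality argument suffices: the self-intersection locus of an immersion of the real $2$-manifold $\Rcal$ into $\r^n$ is generically of dimension $4 - n < 0$, so an arbitrarily small perturbation of $X_{j+1}$ within the class of conformal minimal immersions makes it injective on $K_{j+1}$, and summing these perturbations along the induction yields a globally injective limit $\wt X$.
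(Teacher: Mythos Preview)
Your inductive framework---exhaust $\Rcal$ by Runge compacts, at each step apply an admissible-set Mergelyan theorem for conformal minimal immersions, and glue the old map on $K_j$ to the given data on the new arc segment---is essentially the scheme the paper follows (their Lemma~3.1 is precisely the ``preparation/regularisation'' step you allude to at the trouble points where the arc meets $bK_j$). One small slip: a properly embedded copy of $\R$ cannot meet each $\partial K_j$ in a \emph{single} point---it must exit and re-enter, so you will have at least two transverse intersections---but this is cosmetic and does not affect the argument.

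The genuine gap is in your completeness step. You propose to insert a Jorge--Xavier/Nadirashvili labyrinth in the shell $K_j\setminus\mathrm{Int}(K_{j-1})$ and treat this as routine. It is not: the arc $S$ passes through every such shell, and on $S$ you are committed to staying within $\epsilon$ of the \emph{given} continuous map $X$, which you have no control over. A divergent path can simply follow $S$ across the shell, bypassing any labyrinth pieces you place away from $S$, and since the values along $S$ are essentially prescribed there is no reason the intrinsic length of that path should be large. The paper flags exactly this obstruction (``$M$ impedes our ability to construct such a labyrinth'') and replaces the labyrinth by a different mechanism: first perturb the map along $S\cap(K_j\setminus\mathring L)$ so that its image, while still $\Cscr^0$-close to $X$, has large arclength (one can wind a lot in one coordinate without moving far in sup norm); this forces any path staying near $S$ to be long. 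Then, in simply connected discs $D$ disjoint from $S$, shift the remaining coordinates by a large constant, so that any path that strays away from $S$ and meets $D$ is also long. The two cases together give $\mathrm{dist}_{X_j}(p_0,bK_j)>j-1$. Without some device of this kind your completeness argument does not go through.
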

\noindent This result is the first Carleman type theorem for conformal minimal immersions in  $\R^n$.
%
%
In fact we prove that the previous result remains true not only for minimal surfaces, but also for directed immersions: a more general family of holomorphic immersions which includes {\em null curves}. A null curve is a holomorphic immersion $F\colon \Rcal\to\c^n$ directed by the null quadric
\begin{equation}\label{eq:pnullquadric}
	\Agot:=\{ z\in\c^n : z_1^2+\ldots+z_n^2=0\}.
\end{equation}
This family of holomorphic immersions is closely related to conformal minimal surfaces in the sense that the real and imaginary part of a null curve is a conformal minimal immersion. Conversely, any conformal minimal immersions defined over a simply connected domain is the real (or imaginary) part of a null curve. Recent techniques coming from complex analysis have been used in the study of minimal surfaces, see \textsection \ref{connections} or the survey \cite{AlarconForstneric2017Survey} for more details.

\begin{theorem}[Carleman theorem for directed immersions]\label{th:easyCarleman}
	Let \( \Rcal \) and $S\subset \Rcal$ be as in Theorem \ref{th:simpleminimal}.
	Let \( \Sgot \) be an irreducible closed conical subvariety in \( \C^n \), \( n \geq 3\), which is contained in no hyperplane and such that \( \Sgot \setminus \{ 0 \}  \) is an Oka manifold.
	For every \( \Sgot\)-immersion \( F\colon S \to \C^n \) (see Def.\,\ref{def:Simmersion}) and every positive continuous function \( \epsilon \colon S\to\r_+\), there is an {injective} \( \Sgot\)-immersion \( \wt F\colon \Rcal \to \C^n \) such that
	\[
	\| \wt F(p) - F(p) \|< \epsilon(p),\quad \text{ for every } p \in S.
	\]
\end{theorem}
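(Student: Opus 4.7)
The plan is to reduce Theorem~\ref{th:easyCarleman} to an iterative application of a Mergelyan-type approximation theorem for $\Sgot$-immersions on \emph{admissible sets} in $\Rcal$ (finite unions of $\Ocal(\Rcal)$-convex compacta together with smooth arcs meeting only at endpoints). Such a Mergelyan theorem for Oka-directed holomorphic immersions is by now standard in this circle of ideas and will serve as the workhorse; the Carleman strengthening is then obtained by the classical bootstrap, iterating Mergelyan along an exhaustion of $\Rcal$ and telescoping the errors.

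\textbf{Exhaustion and inductive construction.} Choose a normal exhaustion $K_1 \subset K_2 \subset \cdots$ of $\Rcal$ by smoothly bounded, $\Ocal(\Rcal)$-convex compact domains with $K_j \subset \mathrm{int}\,K_{j+1}$, arranged so that each intersection $S_j := S \cap K_j$ is a single compact subarc with endpoints on $\partial K_j$; then the set $A_j := K_j \cup S_{j+1}$ is admissible in $\Rcal$, and $\bigcup_j S_j = S$ since $S$ is divergent. We build inductively injective $\Sgot$-immersions $F_j\colon \Rcal \to \C^n$ and positive constants $\delta_j$ so that
\begin{enumerate}
\item[(i)] $\|F_{j+1}-F_j\|_{\mathcal{C}^1(K_j)} < \delta_j < 2^{-j}$;
\item[(ii)] $\|F_j - F\|_{S_j} < \tfrac{1}{2}\epsilon|_{S_j}$.
\end{enumerate}
The step $j \mapsto j+1$ goes as follows: glue $F_j$ on $K_j$ to a smooth extension of $F|_{S_{j+1}\setminus K_j}$ along the two newly attached arcs, matching values at the endpoints of $S_j$ to within the tolerance provided by (ii); then apply the Mergelyan theorem for $\Sgot$-immersions on the admissible set $A_j$ to obtain a global $\Sgot$-immersion $F_{j+1}\colon \Rcal \to \C^n$ that is close to this data in $\mathcal{C}^1$ on $K_j$ and in $\mathcal{C}^0$ on $S_{j+1}$.

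\textbf{Injectivity and passage to the limit.} Because $n \geq 3$ and $\Sgot \setminus \{0\}$ is Oka, the usual general-position argument in the directed-immersion framework (with the dimension count $2\dim_{\mathbb{C}}\Rcal = 2 < n$) shows that injective $\Sgot$-immersions are dense in the space of all $\Sgot$-immersions; we therefore arrange at each step that $F_{j+1}$ is injective on $K_{j+1}$. To propagate injectivity to the limit, we further shrink each $\delta_j$ below a compactness-based lower bound depending on $F_j|_{K_j}$ guaranteeing that every $\mathcal{C}^1$-perturbation of $F_j$ within $\delta_j$ on $K_j$ remains injective on $K_{j-1}$. Condition (i) then forces $\{F_j\}$ to converge locally uniformly in $\mathcal{C}^1$ to a holomorphic map $\wt F\colon \Rcal \to \C^n$; $\Sgot$-directedness and non-vanishing of the derivative pass to the limit, and injectivity holds on every $K_j$, hence globally on $\Rcal$. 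Summing (i) and combining with (ii) yields $\|\wt F - F\| < \epsilon$ on $S$, provided the $\delta_j$ are also chosen so that $\sum_{k\geq j}\delta_k < \tfrac{1}{2}\min_{S_j}\epsilon$ for every $j$ (possible by a standard diagonal selection since each $\min_{S_j}\epsilon > 0$).

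\textbf{Main obstacle.} The crux is the admissible-set Mergelyan step performed in concert with injectivity: one must correct an $\Sgot$-valued derivative along the newly attached arcs of $S_{j+1}\setminus K_j$ while barely moving the already-built $F_j$ on $K_j$ and keeping the result injective. This is exactly where the Oka property of $\Sgot \setminus \{0\}$ and the non-degeneracy assumption that $\Sgot$ spans $\C^n$ are used in full force. The secondary technical point is ensuring the general-position perturbation for injectivity is compatible with, and does not destroy, the Mergelyan approximation achieved at each stage.
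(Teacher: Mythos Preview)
Your approach is essentially the paper's: iterate Mergelyan-for-$\Sgot$-immersions along an exhaustion, gluing the global approximant back to the original data on the protruding arcs at each step, then pass to the limit. The paper isolates your ``matching values at the endpoints'' step as a stand-alone gluing lemma (Lemma~\ref{lem:gluing}, proved via \cite[Lemma~3.3]{AlarconCastro2018}) and keeps the intermediate maps $F_j$ defined on $S\cup K_j$ rather than globally, which makes the error bookkeeping marginally cleaner but is otherwise equivalent.

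One small slip to fix: condition~(ii) as written cannot be maintained inductively on the old part $S_j\subset K_j$, since after perturbing to $F_{j+1}$ you only get $\|F_{j+1}-F\|<\tfrac12\epsilon+\delta_j$ there. Your final estimate really only uses (ii) on the \emph{newly attached} arcs $S_j\setminus S_{j-1}$; either restate (ii) that way, or---as the paper does---track the increments $\|F_{j+1}-F_j\|<\epsilon/2^{j+1}$ on all of $S$ and telescope. With that correction the argument is complete.
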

\noindent Strictly speaking, by applying \cite[Lemma 3.3]{AlarconCastro2018} infinitely often, it is enough to assume that $F$ is a continuous map. 
However, adding the assumption that $F$ is $\Cscr^1$ means our notion of $\Sgot$-immersion is consistent with the notion introduced in \cite{AlarconForstneric2014IM,AlarconForstnericLopez2016MZ}.

Theorem \ref{th:easyCarleman} is  a consequence of the more general  Theorem \ref{th:Carleman} where the approximating set $S$ is a Carleman admissible subset, see Definition \ref{def:Cadmissible}, and the approximating map  interpolates to a finite order along a discrete subset of $\Rcal$.
Carleman admissible sets naturally generalize the admissible sets on which Mergelyan results for conformal minimal immersions and directed holomorphic curves have been proven.
An arbitrary open Riemann surface has many Carleman admissible sets $S$ for which $\mathring S$ has infinitely many connected components and  is not relatively compact.
Such  sets are not covered by previous theorems but are useful to consider because they allow us to construct  interesting examples of minimal surfaces, see  Corollary \ref{cor:oneminimalsurfacetorulethemall}.

Furthermore, we may prove global properties for the solutions that we construct.
The next result shows that the approximation may be done by complete $\Sgot$-immersions under natural assumptions on $\Sgot$.
\begin{theorem}\label{th:completeness}
	The $\Sgot$-immersion $\wt F\colon \Rcal\to\c^n$, $n\ge 3$, constructed in  Theorem \ref{th:easyCarleman} may be chosen to be complete provided that
	$\Sgot\cap\{z_1=1\}$ is an Oka manifold and the coordinate projection $\pi_1\colon \Sgot\to\c$ onto the $z_1$-axis admits a local holomorphic section $h$ near $z=0\in\c$ with $h(0)\neq 0$.
\end{theorem}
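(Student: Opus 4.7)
The strategy is to interleave two kinds of moves in a single inductive exhaustion of $\Rcal$: Carleman-type approximation moves on (a neighborhood of) the divergent arc $S$, exactly as in Theorem \ref{th:easyCarleman}, and length-enlargement moves that force the intrinsic distance to infinity, using the Jorge--Xavier labyrinth technique adapted to directed holomorphic immersions by Alarc\'on--Forstneri\v c--L\'opez.

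Fix $p_0\in S$ and a normal exhaustion $\emptyset=K_0\subset K_1\subset K_2\subset\cdots$ of $\Rcal$ by smoothly bounded Runge compacts with $p_0\in\mathring K_1$ and $\bigcup_n K_n=\Rcal$, chosen so that $S$ meets each $bK_n$ transversely in finitely many points and each connected component of $K_{n+1}\setminus\mathring K_n$ is a topological disk or annulus cleanly split by $S$ into finitely many rectangles. At stage $n$ we produce an $\Sgot$-immersion $F_n$ defined near the Carleman admissible set $\Omega_n:=K_n\cup S$ such that
\begin{enumerate}[(i)]
\item $\|F_n-F_{n-1}\|<\delta_n$ on $\Omega_{n-1}$, with $\sum_n\delta_n<\infty$ and $\delta_n$ much smaller than $\min_{K_{n-1}}\epsilon$;
\item $\|F_n(p)-F(p)\|<(1-2^{-n})\,\epsilon(p)$ for every $p\in S$;
\item $\dist_{F_n}(p_0,bK_n)>n$.
\end{enumerate}
Items (i) and (ii) are supplied by Theorem \ref{th:easyCarleman} applied with initial data $F_{n-1}$ on $\Omega_{n-1}$. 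The new ingredient is (iii).

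For the length enlargement, starting from $F_{n-1}$ defined on a neighborhood of $\Omega_{n-1}$, place in each component of $K_n\setminus\mathring K_{n-1}$ a Jorge--Xavier-type labyrinth $L_n$ of short narrow compact arcs, arranged uniformly away from $S$, such that every path in $K_n\setminus\mathring K_{n-1}$ joining $bK_{n-1}$ to $bK_n$ must traverse a prescribed (arbitrarily large) number of arcs of $L_n$. The hypothesis on the local section $h$ of $\pi_1$ with $h(0)\neq 0$ provides a holomorphic chart of $\Sgot$ near its apex: it allows us to add a large perturbation to the $\Sgot$-valued derivative $dF_{n-1}$ along $L_n$ while controlling the periods and the integrals along paths. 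The Oka property of $\Sgot\cap\{z_1=1\}$ is used, via the conical structure $\Sgot\setminus\{0\}=\C^*\cdot(\Sgot\cap\{z_1=1\})$, to realize the continuous labyrinth-defined perturbation by a global holomorphic $\Sgot$-valued map on a neighborhood of $K_n\cup L_n$. This produces an $\Sgot$-immersion $\wt F_n$ on a neighborhood of $K_n\cup L_n\cup S$ whose derivative has huge $\Sgot$-modulus on $L_n$, is close to $dF_{n-1}$ on $\Omega_{n-1}$, and satisfies $\dist_{\wt F_n}(p_0,bK_n)>n$. A final application of Theorem \ref{th:easyCarleman} to $\wt F_n$ on $\Omega_n$, with tolerance small compared to the length already built in, yields $F_n$ satisfying (i)--(iii).

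The main obstacle is that the length-enlargement step must coexist with the Carleman control on $S$: the narrow strips supporting $L_n$ must be placed so as not to cross or shadow $S\cap(K_n\setminus\mathring K_{n-1})$, and the subsequent Mergelyan--Carleman step must preserve both the intrinsic length added on $L_n$ and the prescribed closeness to $F$ on the divergent arc. This is arranged by routing $L_n$ through the topological rectangles into which $S$ cuts each component of $K_n\setminus\mathring K_{n-1}$ and by adding length before, rather than after, the Carleman approximation at each stage, so that the final approximation only slightly perturbs the distances already produced. Passing to the limit, $\sum_n\delta_n<\infty$ guarantees that $\wt F:=\lim_n F_n$ exists as an $\Sgot$-immersion on $\Rcal$, (ii) yields $\|\wt F-F\|<\epsilon$ on $S$, and (iii) together with $K_n\nearrow\Rcal$ gives completeness of $\wt F$.
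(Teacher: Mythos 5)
There is a genuine gap, and it sits exactly where the paper says it does: the Jorge--Xavier labyrinth method ``does not seem applicable here because $M$ impedes our ability to construct such a labyrinth.'' Your labyrinth $L_n$ is ``arranged uniformly away from $S$'' and confined to the rectangles into which $S$ cuts each annulus $K_n\setminus\mathring K_{n-1}$. But then a divergent path can simply travel along $S$ itself (or in an arbitrarily thin neighbourhood of it) from $bK_{n-1}$ to $bK_n$, meeting no arc of $L_n$; your claim that \emph{every} such path must traverse a prescribed number of labyrinth arcs is false. Along that corridor the map is pinned by the Carleman condition to be $\epsilon$-close to $F$ on $S$, and $F|_S$ need not be proper or even unbounded (Theorem \ref{th:completeness} makes no such assumption), so nothing forces the image of that path to be long. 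Closing the corridor by pushing labyrinth arcs up against $S$ is not an option either, since the large perturbation supported on them would, by continuity, destroy the approximation on $S$.

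The paper's actual argument replaces the labyrinth by a different dichotomy. First, one pre-stretches $F_{j-1}$ \emph{on $M$ itself}: since arc length is not controlled by the $\Cscr^0$-norm, one may replace $F_{j-1}|_{M\cap A}$ by a $\Cscr^0$-close map whose first coordinate projection has image of length $>1$ on every crossing arc, then glue and globalize. By continuity this persists on a neighbourhood $U$ of $bL\cup(M\cap A)$. One then places finitely many disjoint closed disks $D\subset A\setminus M$ so that any crossing arc avoiding $D$ must lie in $U$. On $D$ the coordinates $z_2,\dots,z_n$ are shifted by a huge constant while $\pi_1$ is kept \emph{globally unchanged} -- this is precisely where the Oka property of $\Sgot\cap\{z_1=1\}$ and the local section of $\pi_1$ enter, via \cite[Proposition 5.2]{AlarconCastro2018}. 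A crossing arc then either meets $D$ (long image because of the shift) or stays in $U$ (long image because of the preserved, pre-stretched first coordinate). Your proposal contains neither the pre-stretching along $M$ nor any substitute for it, so the inductive estimate $\dist_{F_n}(p_0,bK_n)>n$ cannot be established as written.
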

\noindent On the other hand, for proper immersions we have the following result.
\begin{theorem}\label{th:properness}
	The $\Sgot$-immersion $\wt F\colon \Rcal\to\c^n$, $n\ge 3$, constructed in Theorem \ref{th:easyCarleman} may be chosen to be proper provided that
	the restricted map $F|_{S}$ is proper and that
	$\Sgot\cap\{z_j=1\}$ is an Oka manifold and the coordinate projection $\pi_j\colon \Sgot\to\c$ onto the $z_j$-axis admits a local holomorphic section $h_j$ near $z=0\in\c$ with $h_j(0)\neq 0$ for all $j=1,\ldots,n$. 
\end{theorem}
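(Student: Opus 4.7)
The plan is to enhance the inductive construction in the proof of Theorem~\ref{th:easyCarleman} with properness-pushing steps, in the spirit of the Alarc\'on--Forstneri\v c scheme for proper directed immersions (compare \cite{AlarconForstneric2014IM,AlarconForstnericLopez2016MZ}). I fix a normal exhaustion $L_1 \Subset L_2 \Subset \cdots$ of $\Rcal$ by $\Ocal(\Rcal)$-convex, smoothly bounded compact domains such that each $L_n\cup S$ is Carleman admissible and $S\setminus L_n^\circ$ consists of two divergent sub-arcs of $S$. Using the properness of $F|_S$ (and passing to a subsequence of the exhaustion if necessary), I arrange that $\|F(p)\|>r_n$ for every $p\in S\setminus L_n^\circ$, where $r_n\nearrow\infty$ is a prescribed sequence of radii. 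Then I choose $\epsilon_n\searrow 0$ with $\sum_n\epsilon_n<\infty$ and $\epsilon_n<\tfrac{1}{2}\inf_{S\cap L_n}\epsilon$.

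I construct inductively $\Sgot$-immersions $F_n\colon \Rcal\to\C^n$ satisfying: (a) $\|F_n-F_{n-1}\|_{L_{n-1}}<\epsilon_n$; (b) $\|F_n(p)-F(p)\|<\tfrac{1}{2}\epsilon(p)-\sum_{k>n}\epsilon_k$ for $p\in S\cap L_n$; and (c) $\|F_n(p)\|>r_n$ for $p\in\Rcal\setminus L_n^\circ$. The passage from $F_{n-1}$ to an auxiliary map $G_n$ satisfying (a) and (b) is the Carleman--Mergelyan step already supplied by the proof of Theorem~\ref{th:easyCarleman}, since $L_n\cup S$ is Carleman admissible. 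To upgrade $G_n$ to $F_n$ satisfying also (c), I work separately on each connected component of the shell $L_n\setminus L_{n-1}^\circ$, splitting into noncritical and critical sub-cases in the standard way. On each component I apply a \emph{coordinate-raise}: pick an index $j\in\{1,\dots,n\}$ for which the $j$-th coordinate of $G_n$ is smallest in modulus on the outer boundary of the component, then use the local section $h_j$ of $\pi_j$ together with the Oka property of $\Sgot\cap\{z_j=1\}$ to manufacture $\Sgot$-valued perturbations of $G_n$ that add an arbitrarily large multiple to the $j$-th coordinate on the outer boundary while remaining $\epsilon_n$-small on $L_{n-1}$. This is exactly the mechanism used for proper directed immersions in \cite{AlarconForstneric2014IM,AlarconForstnericLopez2016MZ}, with the new requirement that the perturbation respect the Carleman tolerance on $S$.

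The main obstacle is reconciling (b) and (c) on the shell components that meet $S$: one cannot perturb the $j$-th coordinate appreciably along the sub-arc of $S$ lying in the shell without destroying the Carleman bound on $S$. Here the preparatory arrangement $\|F\|>r_n$ on $S\setminus L_n^\circ$ pays off, because (b) then already implies $\|F_n(p)\|>r_n-\tfrac{1}{2}\epsilon(p)$ along $S$ in the shell, so the coordinate-raise can be localized in an open subset of the shell disjoint from (a neighbourhood of) $S$; since $S$ is a closed divergent arc and the shell components meeting $S$ are collars, such a localization is straightforward. Taking the limit $\wt F=\lim_n F_n$, which exists uniformly on compacts by (a) and is therefore an $\Sgot$-immersion, (b) yields the $\epsilon$-Carleman approximation of $F$ on $S$, while (c) together with the summability of the $\epsilon_n$ gives $\|\wt F(p)\|\geq r_n-\sum_{k>n}\epsilon_k\to\infty$ whenever $p\notin L_n$, so $\wt F$ is proper. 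Injectivity is preserved by interleaving the general-position perturbations already used in the proof of Theorem~\ref{th:easyCarleman}.
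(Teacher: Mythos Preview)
Your inductive framework (normal exhaustion, a Carleman/Mergelyan step producing $G_n$, then a properness push on each shell, with the arcs of $S$ crossing the shell handled by the assumed properness of $F|_S$) is the same as the paper's. But the core technical step and the bookkeeping are both off.

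The description of the ``coordinate-raise'' inverts the role of the distinguished index. The hypotheses that $\Sgot\cap\{z_j=1\}$ is Oka and that $\pi_j$ admits a local section $h_j$ with $h_j(0)\neq 0$ do \emph{not} let you ``add an arbitrarily large multiple to the $j$-th coordinate'' of an $\Sgot$-immersion; what they buy, via \cite[Proposition~5.2]{AlarconCastro2018}, is approximation of an $\Sgot$-immersion on an admissible set while keeping its $j$-th component \emph{fixed}. The paper (Lemma~\ref{lem:propernesslemma}) uses this in the opposite direction to yours: the inner boundary of the annulus is partitioned into subarcs $\alpha_i$ according to which coordinate $a$ is already large there; on the disc $D_i$ bounded by $\alpha_i$, a radial arc, and a piece of $bL$, one \emph{fixes} coordinate $a$ and enlarges some other coordinate on $D_i\setminus\Upsilon_i$, so that $|\pi_a|$ stays large near $\alpha_i$ and $\|\cdot\|_\infty$ becomes large on the rest. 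A single index $j$ per shell component cannot work, since no one coordinate of $G_n$ need be uniformly bounded below along the whole inner boundary; this is exactly why the hypothesis is required for \emph{every} $j=1,\dots,n$ and why the construction iterates over $m=1,\dots,n$.

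Separately, condition (c) as written, $\|F_n(p)\|>r_n$ for all $p\in\Rcal\setminus L_n^\circ$, is neither attainable by a shell-localized modification (you have no control of a globally defined $\Sgot$-immersion on the unbounded set $\Rcal\setminus L_n$) nor does it combine with (a) to give your final estimate: if $p\in L_m\setminus L_{m-1}$ with $m>n$, property (a) says nothing about $F_k(p)-F_{k-1}(p)$ for $n<k\le m$, so $\|F_n(p)\|>r_n$ does not propagate to $\|\wt F(p)\|>r_n-\sum_{k>n}\epsilon_k$. The paper instead records two shell-local conditions, $\|F_j\|_\infty>\lambda_j$ on $\mathring K_j\setminus K_{j-1}$ and $\|F_j\|_\infty>\tau_j$ on $bK_j$, where $\lambda_j,\tau_j\to\infty$ are built from the values of $F$ on $S$ and of $F_{j-1}$ on $bK_{j-1}$; these do combine with ({\rm I}$_j$) to force properness of the limit.
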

A proper map is complete but a complete map is not necessarily proper. Therefore Theorem \ref{th:properness} does not imply Theorem \ref{th:completeness} since  the initial data in Theorem \ref{th:completeness} need not be proper.

The additional assumptions on \( \Sgot \) in Theorems \ref{th:completeness} and \ref{th:properness} are in particular satisfied by the null quadric $\Agot$ and hence Theorem \ref{th:easyCarleman} is applicable to {\em null curves}, that is, to holomorphic immersions directed by the null quadric. Hence, Theorem \ref{th:easyCarleman} applies to conformal minimal immersions $X$ with vanishing flux; see \textsection \ref{connections} for details.
Moreover, by modifying the proof slightly we may prove a more general result that ensures also control of the flux map, jet interpolation on a closed and discrete subset of $\Rcal$, properness of $\wt X$ if the image of $M$ by $X$ is proper, and injectivity of $\wt X$ if $n\ge 5$, see Theorem \ref{th:Carlemanminimal}.

The usual way, based on the ideas of Jorge and Xavier \cite{JorgeXavier1980AM} and Nadirashvili \cite{Nadirashvili1996IM}, to ensure completeness when constructing a conformal minimal immersion  is to approximate a concrete map defined on a labyrinth which has certain desirable properties. This method does not seem applicable here because $M$ impedes our ability to construct such a labyrinth. 
Therefore, we develop a new technique to prove completeness in Theorem \ref{th:completeness} that makes use of a different type of approximating set.
This new approach may  also be used to simplify the proofs of the previous results.

Several applications of our results are presented in \textsection \ref{applications}. These applications include an approximate solution to the Plateau problem for divergent paths or an example of a minimal surface approximately containing every conformal minimal surface \( \overline \D \to \R^n \) to any degree of accuracy, see Corollary \ref{cor:plat} and Corollary \ref{cor:oneminimalsurfacetorulethemall} respectively. 
We will also show, Corollary \ref{co:stratified}, that the basic case of our theorem remains true for (stratified) totally real sets.
In particular, combining our results with arguments from \cite{magnusson2016} we obtain the following.
\begin{enumerate}[\rm (1)]
	\item A totally real set \( M \subset \Rcal \) admits Carleman approximation by \( \Sgot \)-immersions if and only if \( M \) is holomorphically convex and has bounded E-hulls. 
	
	\item Suppose \( \Sgot \) satisfies the additional hypotheses of  Theorem \ref{th:easyCarleman}. A totally real set \( M \subset \Rcal \) admits Carleman approximation by complete \( \Sgot \)-immersions if and only if \( M \) is holomorphically convex and has bounded E-hulls. 
\end{enumerate}
Since the null quadric satisfies the additional hypotheses of Theorem \ref{th:easyCarleman}, see \cite[\textsection 2.3]{AlarconCastro2018}, we point out that item {\rm (2)} holds for null curves.

The main tools we need to prove our Carleman theorem are a  Mergelyan type approximation theorem and a method of gluing. 
Mergelyan's theorem and the tools needed to prove our gluing lemma, Lemma \ref{lem:gluing}, are provided by \cite{AlarconCastro2018,AlarconForstneric2014IM,AlarconForstnericLopez2016MZ}.
In particular, we find \cite[Lemma 3.3]{AlarconCastro2018} especially useful.
{We point out that} it would not suffice to simply glue the derivatives and then integrate because, amongst other problems, the integral is \emph{not} going to be a gluing in general.

Note that Carleman approximation by holomorphic functions was already  well understood;  approximating by holomorphic functions is clearly easier than approximating by \( \Sgot \)-immersions since the latter is a more constrained version of the former.
Call a subset \( E \) of an open Riemann surface \(\Rcal\) a Carleman approximation set if for every continuous function \( f \in \Cscr(E) \) which is holomorphic on the interior of \( E \) and every strictly positive continuous function \( \epsilon \in \Cscr(E) \), there exists \( g \in \Oscr(\Rcal)\)  such that 
\begin{equation}\label{carlemansense}
|f(x)-g(x) | < \epsilon(x), \quad \text{ for every \( x \in E \).}
\end{equation}
Nersesjan \cite{nersesjan1971} characterised Carleman approximation sets in the planar case in 1971, and later in 1986 Boivin \cite{boivin1986} obtained a characterisation for arbitrary open Riemann surfaces.
Furthermore, sets of Carleman approximation by harmonic functions have  been studied  and are well understood for connected open subsets of \( \R^n \); see  \cite{gardiner1995}.

Carleman's theorem has also been generalised to higher dimensions.
Manne, \O{}vrelid, and Wold \cite{manne2011} obtained a satisfying characterisation for a  totally real set \( M \) contained in a Stein manifold \( X \) to admit \( \Cscr^k \)-Carleman approximation, \( k \geq 1 \).
Magnusson and Wold \cite{magnusson2016} showed that if \( X = \C^n \) then the same characterisation holds more generally for statified totally real sets and for \( k \geq 0 \).
The second author \cite{Chenoweth2018Carleman}  has also proven Carleman results for maps from Stein manifolds into Oka manifolds.

The  reader may wonder the following. If \cite{Chenoweth2018Carleman}  gives us Carleman approximation of maps \( X \to Y \) where \( X \) is Stein and \( Y \) is Oka, and if \( \Sgot_* \) is an Oka manifold, then is it not the case that Theorem \ref{th:easyCarleman} (or at least the basic case) follows from this theorem?
The answer to this question seems to be no.
The naive idea would be to approximate the derivative and then integrate to get something approximating the original \( \Sgot \)-immersion.
However, assuming you were somehow able to fix the periods, one should not expect the error of the integral function to tend towards zero as \(  x \rightarrow \infty\) as the error might be cumulative. Hence this method would not give us Carleman approximation (however, it would give uniform approximation on \( S \)).

We conclude this introduction by framing some of our results in function theoretic terms.
Let \( M \subset \Rcal \) be a totally real set, see Definition \ref{def:totallyrealset}.
For continuous functions \( f\colon M \to \C^n\)  and \( \epsilon\colon M \to \R_+ \) define
\[
[f,\epsilon]=\{ g \in \Cscr(M,\C^n): \|f(x)-g(x)\|< \epsilon(x),\ \forall x\in M\}.
\]
Sets of this form form a basis for the strong  topology on \( \Cscr(M,\C^n)\).
Denote the collection of continuous maps \( M \to \C^n \) endowed with this topology by \( \Cscr_{S}(M,\C^n) \).
Let \( \text{PROP}(M,\C^n)\subset \Cscr_{S}(M,\C^n) \) be the set of all proper continuous maps  \( M \to \C^n \). One can prove that \( \text{PROP}(M,\C^n)\) is an open subset of \(  \Cscr_{S}(M,\C^n) \), see \cite[Theorem 1.5]{hirsch1976}.
Let \( \rho\colon\Cscr(\Rcal,\C^n)\to \Cscr_{S}(M,\C^n)  \) be the restriction operator.
By applying \cite[Lemma 3.3]{AlarconCastro2018} infinitely often one can prove that the space of \( \Sgot \)-immersions   is dense in \( \Cscr_S(M,\C^n)\).
\label{rem:sectioncontainingdense}
Our main result  thus  implies  the following:

\begin{enumerate}\item[a)] The set \( \rho(A) \) is dense in \(\Cscr_{S}(M,\C^n)\), where \( A \) is  the set of all \( \Sgot \)-immersions \( \Rcal \to \C^n \).
\end{enumerate}

\noindent For \( \Sgot \) satisfying the additional hypotheses of Theorem \ref{th:easyCarleman}:
\begin{enumerate}
	\item[b)] The set \( \rho(B) \) is dense in \(\Cscr_{S}(M,\C^n)\), where \( B \) is  the set of all complete \( \Sgot \)-immersions \( \Rcal \to \C^n \)
	\item[c)] The  set \( \rho(C)\)  is dense in  is dense in \(\text{PROP}(M,\C^n)\), where \( C \) is the set of all proper \( \Sgot \)-immersions \( \Rcal \to \C^n \).
\end{enumerate}

\section{Preliminaries}\label{sec:prelim}
Let $\n=\{1,2,3,\ldots\}$, $\z_+=\n \cup \{ 0 \}$, and $\r_+=(0,\infty)$. Given $n\in\n$ and $\k\in\{\r,\c\}$ we denote the Euclidean norm by $||\cdot||$, the distance between two points by ${\dist}(\cdot,\cdot)$,  the length of an arc in $\k^n$ by $\length(\cdot)$, and the absolute value of a real (or complex) number by $|\cdot|$.
Furthermore, define \(\| v \|_\infty = \max \{| v_1|, \dots, |v_n| \}\) for \(v = (v_1, \dots, v_n) \in \K^n\).

Given $p\in\k^n$ and a positive number $r>0$, we denote the ball of radius \( r \) centred at \( p \) by
\[
B(p,r):=\{x\in\k^n : ||x-p||<r \}.
\]
Let \( A \) be a subset of a topological space \( T \). We denote the closure of \( A \) in $T$ by $\overline A$, the interior of \( A \) in $T$ by $\mathring{A}$, and the boundary of \( A \) in \( T \)  by $bA$.
Given subsets $A$ and $B$ of $T$, we use the notation $A\Subset B$ to mean $\overline A\subset \mathring{B}$.

Given a smooth connected surface $S$ (with possibly nonempty boundary) and a smooth immersion $X\colon S\to\k^n$, we denote by ${\dist}_X\colon S\times S\to [0,+\infty)$ the Riemannian distance induced on $S$ by the Euclidean metric of $\k^n$ via $X$, that is:
\[
{\dist}_X(p,q):=\inf\{ \length(\alpha) : \alpha\subset S \text{ is an arc connecting $p$ to $q$} \},\quad p,q\in S.
\]
In addition, if $Q\subset S$ is a relatively compact subset of $S$, we define
\[
{\dist}_X(p,Q):=\inf \{{\dist}_X(p,q):q\in Q \}, \quad p\in S.
\]
{A divergent path on \( S \) is a continuous map \( \gamma\colon [0,+\infty) \to S \) such that for every compact set \( K \subset S \) there exists \( t_0 \in [0,+\infty) \) such that \( \gamma(t) \not \in K \) for \( t \geq t_0 \).}
An immersed open surface $X\colon S\to\k^n$, $n\ge 3$, is said to be {\em complete} if the Riemannian metric induced by ${\dist}_X$ is complete; equivalently if for any divergent path $\gamma\subset S$, we have that the Euclidean length $\length(\gamma)$ is infinite. On the other hand, an immersed open surface $X\colon S\to\k^n$ is said to be {\em proper} if for any divergent path $\gamma\subset S$, we have that $X(\gamma)$ is also a divergent path on $\k^n$.

Let \( A \) be a subset of an open Riemann surface $\Rcal$ . 
(Throughout this paper every Riemann surface will be assumed to be connected.)
We denote by $\Oscr(A)$ the space of holomorphic functions $A\to\c$ defined on some unspecified open neighbourhood of $A$ in $\Rcal$.
In particular, \( \Oscr(\Rcal)\) denotes the space of holomorphic functions \( \Rcal\to\c \).
Let  $\Ascr^r(A)$, $r\ge 0$, be the space of $\Cscr^r(A)$ functions which are holomorphic on the interior  of \( A \); here $\Cscr^r(A)$ denotes the space of differentiable functions $A\to\c$ up to order $r$. For simplicity we write  $\Ascr(A)$ for $\Ascr^0(A)$ and $\Cscr(A)$ for \( \Cscr^0(A)\).
Likewise, we define the spaces $\Oscr(A,Y)$, $\Ascr^r(A,Y)$, and $\Cscr^r(A,Y)$ of maps $A\to Y$, where $Y$ is a complex manifold.

Let  $K$ be a compact subset of an open Riemann surface \( \Rcal\). 
Define the holomorphically convex hull \( \wh K \) of \( K \) in \( \Rcal \)  by 
\[
\wh K:=\left\lbrace p\in\Rcal : |f(p)|\le \sup\limits_{q\in K} |f(q)| \text{ for any $f\in\Oscr(\Rcal)$}\right\rbrace.
\]
The set \( \wh K \) is also referred to as the \( \Oscr(\Rcal) \)-hull of \( K \).

A compact set $K\subset \Rcal$ is said to be {\em $\Oscr(\Rcal)$-convex} (sometimes called  {\emph{holomorphically convex}} or {\em Runge}) if $\wh K=K$.
Equivalent to \( \Oscr(\Rcal)\)-convexity is  the condition that \( K \) has no \emph{holes}, that is, the complement \( \Rcal \setminus K \) has no relatively compact connected components.
This is a  natural condition to consider in the context of approximation problems as evidenced by the Runge--Mergelyan Theorem \cite{Runge1885AM,Mergelyan1951DAN,Bishop1958PJM} which states that every function in $\Ascr(K)$
is uniformly approximable by entire functions if $K$ is $\Oscr(\Rcal)$-convex.
Observe that these functions are precisely those one  could hope to approximate on \( K \) because the uniform limit of holomorphic functions is holomorphic.

Now suppose that \( E \subset \Rcal \) is a closed, but not necessarily compact, subset. 
Following \cite{manne2011} we define the \emph{hull} of \( E \) to be \[ \wh E = \bigcup\limits_{j \in \N} \wh E^j,\]
where \( {\{E^j\}}_{j \in \N} \) is some compact exhaustion of \( E \).
This definition is independent of the choice of  exhaustion.
For if \( \{ E_1^j \} \) and \( \{ E_2^j \} \) were two exhaustions for \( E \) then simply interweaving the compact sets in the sequence leads to a new exhaustion \[ E_1^1 \subset E_2^{\mu_1} \subset E_1^{\nu_1} \subset E_2^{\mu_2} \subset  \dots \] which gives the same hull as  \( \{ E_1^j \}_{j \in \N} \) and \( \{ E_2^j \}_{j \in \N} \) do  since \( \wh K \subset \wh L \) whenever \( K \subset L \subset \Rcal\) are compacts. 
We say that a noncompact subset \( E \subset \Rcal \) is \( \Oscr(\Rcal )\)-convex (or holomorphically convex) if \( \wh E = E \).

For a closed set \( E \subset \Rcal \) we define 
\[
h(E)=\overline{\wh{ E} \setminus  E}.
\]
We shall say that a set \( E \) has \emph{bounded exhaustion hulls} (or {\em bounded E-hulls}) if for every compact set \( K \subset \Rcal \) we have that \( h(K \cup E) \) is compact.
In the case where \( \Rcal = \C \) this condition is equivalent to the complement \( \C \P^1 \setminus E \) being locally connected at \( \infty \).

The utility of the bounded E-hull condition for us  is that it provides us with nice  compact exhaustions of \( \Rcal \)  with respect to \( E \). Such exhaustions are used to prove our main results. More precisely,  if a subset \( E \subset \Rcal \) is \( \Oscr(\Rcal)\)-convex and has bounded E-hulls, then there is a compact exhaustion \( \{ K_j\}_{j \in \N}\) of \( \Rcal \) by \( \Oscr(\Rcal)\)-convex subsets such that \( K_j \cup E \) is \( \Oscr(\Rcal)\)-convex for every \( j \in \N \), see \cite[Lemma 2.2]{Chenoweth2018Carleman}.

A smooth Jordan arc inside of \( \Rcal \) is the image \( \gamma(I)\) of an interval \( I \subset \R \) under a smooth map \( \gamma\colon I \to \Rcal \)  which is injective except perhaps at the endpoints of \( I \); for example, a divergent arc or a closed Jordan curve.
A family \( \{A_j\}_{j \in J}\) of subsets of \( \Rcal \)  is called locally finite if for each \( p \in \Rcal \) there is a neighbourhood \( V\subset \Rcal \) of \( p \) such that \( V \cap A_j = \emptyset \) for all but finitely many \( j \in J \).

We proceed to define the type of subsets and maps we are going to deal with along the paper. 
A closed subset \( S \subset \Rcal \) is called an {\em admissible} subset if \( S = M \cup K \), where \( K \) is the union of a locally finite pairwise disjoint collection of smoothly bounded compact domains, and \( M \) is the union of a locally finite pairwise disjoint collection of smooth Jordan arcs, so that each Jordan arc intersects the boundary of each compact domain only at its endpoints, if at all, and so that all such intersections are  transverse.
This notion generalises the (compact) admissible subsets introduced by  \cite{AlarconForstneric2014IM,AlarconForstneric2015MA} to the unbounded setting. For the purpose of the paper we state the following definition.
\begin{definition} \label{def:Cadmissible}
	We will call an admissible subset which is holomorphically convex and has bounded exhaustion hulls a {\em Carleman admissible} subset. These are precisely the sets on which we can prove Theorem \ref{th:easyCarleman}.
\end{definition}

The following gives us many basic examples of Carleman admissible sets (with $K$ empty).
\begin{example}\label{ex:Rembedded}
	A smooth properly embedded image of $\R$ in an arbitrary open Riemann surface $\Rcal$ is holomorphically convex and has bounded E-hulls.
\end{example}
\noindent Note that this statement is false if we omit the word `properly', see \cite[p.141]{gaier1987} for examples.
\begin{proof}
	Let $\Rcal$ be an open Riemann surface, $\gamma\colon \R \to \Rcal$ be a smooth proper embedding, and  $M=\gamma(\R)$. For each $j \in \mathbb{N}$ define $M^j := \gamma([-j,j])$.
	Observe that $(M^j)_{j \in \mathbb{N}}$ is a compact exhaustion of $M$.
	For each $j \in \N$ we have that $\wh{M^j}=M^j$ because the complement of $M^j$ is connected and not relatively compact.
	Hence $M$ is $\Oscr(\Rcal)$-convex.
	
	Now, suppose that $K$ is a compact subset of $\Rcal$.
	Without loss of generality assume that $M \cap K \not = \emptyset$. 
	Since $\gamma$ is proper, the set $\gamma^{-1}(K)$ is compact. 
	Suppose that \( [a,b]\) is the convex hull of \(\gamma^{-1}(K) \).
	Let $K'$ be the holomorphically convex hull of $K \cup \gamma ([a,b])$ in $\Rcal$.
	Note that $K'$ is $\Oscr(\Rcal)$-convex and hence $\Rcal \setminus K'$ has no relatively compact connected components.
	
	One can easily verify using the properness and injectivity of $\gamma$ that whenever $t < a$ or $t > b$ we have that $\gamma(t) \not \in K'$.
	Let $U\subset \Rcal$ be the connected component of the complement of $K'$ containing $\gamma((-\infty,a))$.
	Clearly the set $U \setminus \gamma([a-j,a))$ is connected for every $j \in \mathbb{N}$ (one can see this using the tubular neighbourhood theorem  for example). Also, clearly $U \setminus \gamma([a-j,a))$ is not relatively compact since any divergent sequence in $U$ can be perturbed so as to avoid $\gamma([a-j,a))$.
	Hence the complement of $K' \cup \gamma([a-j,a])$ has no relatively compact connected components and therefore is $\Oscr(\Rcal)$-convex.
	We may apply the same reasoning to prove that 
	\[
	K^j:=K' \cup \gamma([a-j,a]) \cup  \gamma([b,b+j])
	\]
	is $\Oscr(\Rcal)$-convex, here we also need $ \gamma([a-j,a)) \cap  \gamma((b,b+j]) = \emptyset$ which is true by injectivity.
	
	The sequence $(K^j)_{j \in \mathbb{N}}$ is a compact exhaustion of $K' \cup M $ by holomorphically convex compact subsets. Therefore $K' \cup M$ is $\Oscr(\Rcal)$-convex and hence $h(K \cup M) \subset K'$, so the hull $h(K \cup M)$ is compact, as required.
\end{proof}

Let $\Sgot$ be a closed conical complex subvariety of $\c^n$, $n\ge 3$,
a {holomorphic} immersion \(  \mathcal{R} \to \mathbb{C}^n \) 
is said to be an {\em \( \mathfrak{S} \)-immersion} if its complex derivative \( F' \) with respect to any local holomorphic coordinates on \( \Rcal \) assumes values in \( \mathfrak{S}_* := \mathfrak{S} \setminus \{ 0 \} \).
If $L\subset \Rcal$ is a compact subset of $\Rcal$, by an $\Sgot$-immersion $L\to\c^n$ we mean a $\Ascr^1(L,\c^n)$ map in $L$ that is an $\Sgot$-immersion in $\mathring{L}$.
More generally, we can define the notion of an $\Sgot$-immersion on a Carleman admissible set as follows.
\begin{definition}\label{def:Simmersion}
	Let \( S = M\cup K \subset \mathcal{R} \) be a Carleman admissible subset, see Definition \ref{def:Cadmissible},
	a map \( F\colon S \to \mathbb{C}^n \) is said to be an {\em \( \Sgot \)-immersion} if $F$ is a map of class $\Ascr^1(S,\c^n)$ such that
	\( F|_K \) is an \( \Sgot \)-immersion and
	the derivative $F'(t)$ with respect to any local real parameter $t$ on $M$ belong to $\Sgot_*$.
\end{definition}
Strictly speaking we only need assume that \( F\) is of class  \(\mathscr{A}(S,\C^n) \) and  \( F|_K \) is an \( \Sgot \) immersion of class \( \Ascr^1(K,\c^n) \), by Mergelyan's theorem and  \cite[Lemma 3.3]{AlarconCastro2018}. 
However, making this assumption means that our notion of \( \Sgot \)-immersion is consistent with the notion introduced in \cite{AlarconForstneric2014IM,AlarconForstnericLopez2016MZ}.

\subsection{Contemporary complex analytic methods in the study of minimal surfaces}
\label{connections}

Minimal surface theory and complex analysis have enjoyed a fruitful relationship for many years due to the Weierstrass representation formula.
Recall that if $\Rcal$ is an open Riemann surface, a conformal immersion $X\colon \Rcal\to\r^n$ is minimal if and only if $X$ is a harmonic map. Moreover, denoting by $\partial$ the $\c$-linear part of the exterior differential $d$, we have that $\partial X=(\partial X_1,\ldots,\partial X_n)$ is a nowhere vanishing holomorphic $1$-form verifying
\begin{equation}\label{eq:partialX}
 \partial X_1^2 + \dots+ \partial X_n^2 =0 \quad \text{and}\quad  |\partial X_1|^2 + \dots+  |\partial X_n|^2 \neq 0.
\end{equation}
In this situation, $\Re(\partial X)$ is an exact $1$-form on $\Rcal$ and the flux map of $X$ is determined by the group morphism $\Flux_X\colon \Hcal_1(\Rcal;\z)\to\r^n$, of the first homology group of $M$ with integer coefficients, defined by
\begin{equation}\label{eq:flux}
\Flux_X(\gamma)=\int_\gamma \Im(\partial X)=-\imath\int_\gamma\partial X, \quad \gamma\in \Hcal_1(\Rcal;\z).
\end{equation}
Conversely, any holomorphic $1$-form $\Phi =(\phi_1,\ldots,\phi_n)$ 
never vanishing on $\Rcal$, verifying \eqref{eq:partialX}, and such that $\Re(\Phi)$ is an exact $1$-form on $\Rcal$ determines a conformal minimal immersion $X\colon \Rcal\to\r^n$ with $\partial X=\Phi$ via the Weierstrass representation formula
\begin{equation}\label{eq:Weierstrassformula}
X(p)=X(p_0)+2\int_{p_0}^p \Re(\Phi),\quad p\in\Rcal,
\end{equation}
for any fixed point $p_0\in \Rcal$ and initial condition $X(p_0)\in\r^n$. For a standard reference see \cite{Osserman-book}.
More recently, tools from holomorphic elliptic geometry have emerged and found their way into contemporary minimal surface theory, see the survey \cite{AlarconForstneric2015Survey}.

\subsection{Elliptic holomorphic geometry}
\label{Oka}

A Stein manifold, introduced in \cite{stein1951}, is a complex manifold with many holomorphic functions on it. More precisely, we say that \( X \) is Stein if
\begin{enumerate}[\rm (i)]
	\item For every \( x, y \in X \), \( x \neq y\), there exists a function \( f \in \mathscr{O}(X)\) such that \( f(x)\not = f(y)\).
	\item For every compact set \( K \subset X  \) the hull  \( \wh K \) is also compact.
\end{enumerate}
A Riemann surface is Stein if and only if it is noncompact, \cite[Corollary 26.8]{Forsterbook}.

The generalisation of Runge's theorem to Stein manifolds is called the Oka-Weil approximation theorem. 
In his seminal paper Gromov \cite{gromov1989} considered the Oka-Weil approximation theorem as expressing a property of the target, in modern language, the approximation property.
A complex manifold \( Y \) is said to have the approximation property if for every Stein manifold \( X \) with an \( \Oscr(X)\)-convex subset \( K \), every continuous map \( X \to Y\) which is holomorphic on a neighbourhood of \( K \) can be approximated uniformly on \( K \) by holomorphic maps \( X \to Y \). 

An Oka manifold is a complex manifold which satisfies the approximation property. There are more than a dozen nontrivially equivalent  characterisations of Oka manifolds; we chose to mention this characterisation because we are considering approximation problems.
For a comprehensive introduction to Oka theory, see Forstneri\v{c}'s book  \cite{forstneric2017} 
or Forstneri\v{c} and L\'{a}russon \cite{ForstnericLarusson2011NY}.

\section{Carleman approximation by directed immersions}\label{sec:Carleman}

We start this section with one of the main technical results in paper. 
The following result is the aforementioned gluing lemma that will be needed in the induction procedure. This gluing lemma together with Mergelyan's theorem will enable us to approximate well on a noncompact sets by directed holomorphic immersions.

\begin{lemma}\label{lem:gluing}
Let  \( \mathfrak{S} \) be a  closed conical complex subvariety which is not contained in a hyperplane of \( \mathbb{C}^n \) and  such that \( \mathfrak{S}_* := \mathfrak{S} \setminus \{ 0 \} \) is smooth and connected. 
Let \( \mathcal{R} \) be an open Riemann surface,  let \( L \subset \Rcal \) be a compact domain of \( \Rcal\)  and 
let $S=M\cup K$ be a Carleman admissible subset, see Definition \ref{def:Cadmissible}, such that $K\cap bL=\emptyset$ and $M$ intersects
\( L \) transversely (if at all), and let \( \epsilon \colon M\cup K \to \mathbb{R}_+ \) be a positive continuous function.
Suppose that \( F\colon M\cup K \to \mathbb{C}^n\) is a \( \mathfrak{S} \)-immersion.
If there is an \( \mathfrak{S} \)-immersion \( \widehat F \colon \mathcal{R} \to \mathbb{C}^n\) such that 
\begin{align}\label{altj}
\| F(p)- \widehat F(p)\| < \epsilon(p) \quad \text{for every \( p \in (M\cup K) \cap L \)},
\end{align}
then 
there exists a  \( \mathfrak{S} \)-immersion \( \wt F\colon M\cup K\cup L \to \mathbb{C}^n \) such that 
\(\widetilde F|_{L} = \widehat F|_{L}
\)
and 
\begin{align}\label{eq:solgluing}
	\| F(p)- \widetilde F(p)\| < \epsilon(p) \quad  \text {for all \( p \in M\cup K \).}
\end{align}
Furthermore, we may ensure that \( \wt F =   F \) outside of an arbitrarily small neighbourhood of  \( L \). 
\end{lemma}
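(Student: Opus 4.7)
The plan is to leave $F$ essentially unchanged on the parts of $M\cup K$ far from $L$, set $\wt F=\wh F$ on $L$, and use \cite[Lemma 3.3]{AlarconCastro2018} to interpolate smoothly on the finitely many subarcs of $M$ that bridge across $bL$. Hypothesis \eqref{altj} is precisely what makes such an interpolation feasible, while the structural assumptions $K\cap bL=\emptyset$ and the transversality of $M\cap bL$ keep the geometry of the bridging region tame.

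First I would localise the construction by choosing an open neighbourhood $U$ of $L$, with $L\Subset U$, such that every component of $K$ disjoint from $L$ is disjoint from $\overline U$ and the only arcs of $M$ meeting $\overline U$ are those already meeting $L$. This is possible because $K$ and $M$ are locally finite, $K\cap bL=\emptyset$, and $M$ meets $bL$ transversely. Since $\overline U$ is compact, only finitely many arcs $\gamma_1,\dots,\gamma_N$ of $M$ intersect $\overline U$, and each $\gamma_i$ meets $bL$ in finitely many transverse points. On $L$ I set $\wt F=\wh F$, and on every component of $K$ or $M$ disjoint from $\overline U$ I set $\wt F=F$; the pieces of $K$ forced to lie in $\mathring L$ are already handled, since there \eqref{altj} yields $\|F-\wh F\|<\epsilon$ automatically.

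The nontrivial step is the interpolation on the subarcs of $\gamma_1,\dots,\gamma_N$ lying in $\overline U\setminus \mathring L$. For each such short arc $\beta$, one endpoint lies on $bL$ (where $\wt F$ has been prescribed to equal $\wh F$) and either the other endpoint lies on $bU$ or $\beta$ continues along $\gamma_i$ past $\overline U$, where $\wt F=F$. Since $\|F-\wh F\|<\epsilon$ at the $bL$-endpoint by \eqref{altj}, I would apply \cite[Lemma 3.3]{AlarconCastro2018} on a neighbourhood of $\beta$ to produce an $\Sgot$-immersion that agrees with $\wh F$ at the $bL$-endpoint, agrees with $F$ near the opposite end, and is uniformly $\epsilon$-close to $F$ throughout $\beta$. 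Assembling these pieces yields a globally defined $\Sgot$-immersion $\wt F\in \Ascr^1(M\cup K\cup L,\C^n)$ satisfying $\wt F|_L=\wh F|_L$ and \eqref{eq:solgluing}; by construction $\wt F=F$ off $U$, and $U$ may be chosen to lie in an arbitrarily small neighbourhood of $L$.

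The main obstacle is this bridge interpolation: along $\beta$ the derivative of the new immersion must take values in $\Sgot_*$ everywhere, while the function itself is prescribed to equal $\wh F$ exactly at one endpoint and to $\epsilon$-approximate $F$ globally. A naive convex combination of $F$ and $\wh F$ will not preserve the $\Sgot_*$ constraint on the derivative, and integrating a combination of the derivatives will not match the desired endpoint values. The content of \cite[Lemma 3.3]{AlarconCastro2018} is that, because $\Sgot_*$ is smooth, connected, and not contained in a hyperplane, one can perform exactly this sort of period-fixing interpolation by a perturbation taking values in $\Sgot_*$; this is the ingredient on which the whole construction hinges.
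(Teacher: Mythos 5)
Your proposal is correct and follows essentially the same route as the paper: define $\wt F$ as $\wh F$ on $L$ and $F$ away from $L$, observe that the only problems are the finitely many transverse crossings of $M$ with $bL$, and repair each one by applying \cite[Lemma 3.3]{AlarconCastro2018} on a short subarc of $M$ crossing $bL$, using \eqref{altj} at the $bL$-endpoint together with continuity to keep the interpolating arc within the $\epsilon$-tube around $F$. The paper additionally spells out that the lemma matches first derivatives (not just values) at both endpoints so that the glued map is of class $\Ascr^1$ with derivative in $\Sgot_*$, a point you should make explicit, but it is supplied by the same lemma you invoke.
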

\noindent Under the additional hypothesis that \( L \) is \( \Oscr(\Rcal) \)-convex the following proof, and hence the lemma, holds for \( \widehat F \) merely defined on \( L \). For we may use Mergelyan theorem for \( \Sgot \)-immersions to approximate \( \widehat F \) by an \( \Sgot \)-immersion defined on all of \( \Rcal \).
\begin{proof}
	Let \( \widehat F\colon \mathcal{R}\to \mathbb{C}^n \) be an arbitrary  \( \mathfrak{S} \)-immersion satisfying \eqref{altj}. Naively we could define \( \widetilde F\colon M\cup K\cup L  \to \mathbb{C}^n\) as
\begin{align*}
\wt F(p) = \begin{cases}
					\widehat F(p) & \text{if } p \in L, \\
					F(p)  &\text{if } p \in (M\cup K) \setminus L. 
				\end{cases}
\end{align*}
This function has a finite number \( r\in\z_+ \) of \emph{trouble points}, that is, points which are discontinuities or critical points, since $K\cap bL=\emptyset$. Clearly, all trouble points are contained in \( M \cap b L \). 

If \( r = 0 \), then we are done. If \( r > 0 \), then we make \( \wt F \) into the desired \( \mathfrak{S} \)-immersion by gluing near each of the trouble points as described below.

Let \( p \) be a fixed trouble point of \( \widetilde F \). Let \( U\subset \Rcal \) be a small neighbourhood of \( p \) not intersecting \( K \) or any other trouble point.
Let \( \gamma\colon (-1,2) \to M \cap U \subset\mathcal{R}\) be an embedded arc in \( \mathcal{R}\) such that \( \gamma(1) = p \), \( \gamma((-1,1]) \subset \mathcal{R} \setminus L \) and \( \gamma((1,2)) \subset L \setminus K\).

Observe that by \eqref{altj}
\begin{align*}
	\|\widehat F(p) - F(p) \| < \epsilon(p)
\end{align*}
and therefore by continuity, there is \( \delta_1 > 0 \) small enough such that
\begin{align}
\rho_1 := \| \widehat F(p) - F(p) \| < \epsilon(\gamma(t)) & \quad \text{for }  t \in [1-\delta_1,1].
\end{align}
Let \(\rho_2=\frac{1}{2} \big(\rho_1+ \min\limits_{t \in [1-\delta_1,1]} \epsilon(\gamma(t))\big) \).
Note that \( \rho_ 2> \rho_1 > 0 \).
Choose \( \delta_1 >\delta_2 > 0 \) to be sufficiently small that 
\begin{align*}
\|  \widehat F(\gamma(1-\delta_2)) -  \widehat F(p) \| &<\rho_2-\rho_1, \ \text{ and} \\
\|  F(\gamma(t)) -  F(p) \| < \rho_2-\rho_1, \quad &\text{whenever } t \in [1-\delta_2,1]. 
\end{align*}
Reparameterising if necessary we may suppose that \( \delta_2 =1 \).
Let \(  N = B(F(p),\rho_2)  \). Observe that \( \widehat F(\gamma(1-\delta_2)), F(p) \in N \) by our choices above.
By \cite[Lemma 3.3]{AlarconCastro2018} there is \( G\colon \gamma([0,1]) \to \mathbb{C}^n \) such that
\begin{itemize}
		\item \( \displaystyle \frac{\partial}{\partial t} (G(\gamma(t))) \in \mathfrak{S}_* \),
		\smallskip
		\item \( G( \gamma(0)) = F(\gamma(0))\) and \( G(\gamma(1))  =\widehat F(\gamma(1))\),
		\smallskip
		\item \( \displaystyle\frac{\partial}{\partial t}(G(\gamma(t)))|_{t=0} =  \frac{\partial}{\partial t}( F(\gamma(t)))|_{t=0} \in \mathfrak{S}_* \),
		\smallskip
		\item \(  \displaystyle\frac{\partial}{\partial t}(G(\gamma(t)))|_{t=1} = \frac{\partial}{\partial t}(\wh F(\gamma(t)))|_{t=1} \in~\mathfrak{S}_*,\)
		\smallskip
		\smallskip 
		\item \( G( \gamma(t)) \subset N \) for every \( t \in [0,1] \).
	\end{itemize}
Redefining \( \wt F \) on \( \gamma([0,1])  \) as \( G \)  gives us a map \( M\cup K\cup L \to \mathbb{C}^n \) with one fewer trouble point.
Repeating this gluing procedure \( r \) times yields the desired \( \widetilde F \).
\end{proof}

We now prove the following result which is an analogue of the classical Carleman Theorem for directed immersions.

\begin{theorem}[Carleman Theorem for directed immersions]\label{th:Carleman}
Let \( \Sgot \) be an irreducible closed conical complex subvariety of \( \C^n \), \( n \geq 3\), which is contained in no hyperplane and such that \( \Sgot_* = \Sgot \setminus \{0\} \) is a (connected) Oka manifold. 
Let $\Rcal$ be an open Riemann surface.
Let $S=M\cup K\subset \Rcal $ be a Carleman admissible subset.
Let $\Lambda\subset\Rcal\setminus M$ be a discrete closed subset of $\Rcal$ and let $\Omega_p \ni p$ for  $p\in\Lambda$ be small pairwise disjoint compact neighbourhoods.
Set $\Omega=\cup_{p\in\Lambda}\Omega_p$.
	Given a map $k\colon \Lambda\to\z_+$, a positive function $\epsilon\colon M\cup K\to\r_+$ and an $\Sgot$-immersion $F\colon M\cup K\cup\Omega\to\c^n$,  there exists an $\Sgot$-immersion $\wt F\colon \Rcal\to\c^n$ that verifies the following properties:
	\begin{enumerate}[\rm (i)]
		\item $\| \wt F(p)- F(p)\|<\epsilon(p)$ for any $p\in M\cup K$.
		\item $\wt F- F$ has a zero of multiplicity $k(p)$ at any $p\in\Lambda$.
		\item If $F|_\Lambda$ is injective, then $\wt F$ is injective.
	\end{enumerate}
\end{theorem}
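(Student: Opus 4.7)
My plan is to build $\wt F$ as a uniform-on-compacta limit of $\Sgot$-immersions $F_j\colon\Rcal\to\C^n$ constructed inductively by alternating Lemma \ref{lem:gluing} with the Mergelyan theorem with jet interpolation for $\Sgot$-immersions from \cite{AlarconCastro2018,AlarconForstnericLopez2016MZ}. Because $S$ is Carleman admissible, the bounded E-hull hypothesis supplies a compact exhaustion $\{L_j\}_{j\in\N}$ of $\Rcal$ by $\Oscr(\Rcal)$-convex sets with $L_j\Subset L_{j+1}$ and $L_j\cup S$ $\Oscr(\Rcal)$-convex for every $j$. A small isotopy of the exhaustion lets me further assume that $bL_j$ meets $M$ transversally and is disjoint from $K\cup\Lambda$, and that each $\Omega_p$ is either contained in $\mathring{L}_j$ or disjoint from $L_j$; after enlarging the $L_j$ if necessary, the auxiliary sets $L_{j-1}\cup(S\cap L_j)$ appearing in the induction can be arranged to be compact, admissible, and $\Oscr(\Rcal)$-convex. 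Finally, fix positive continuous functions $\eta_j\colon S\to\R_+$ with $\sum_j\eta_j<\epsilon$ on $S$, e.g.\ $\eta_j=2^{-j-1}\epsilon$.

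The inductive step is the heart of the proof. Initialise $F_0\colon\Rcal\to\C^n$ by applying Mergelyan with jet interpolation to $F$ on the compact $\Oscr(\Rcal)$-convex admissible set $(S\cap L_0)\cup\Omega$, so that $F_0-F$ vanishes to order $k(p)$ at every $p\in\Lambda$ and $\|F_0-F\|<\eta_1$ on $S\cap L_0$. Suppose $F_{j-1}$ has been produced with the inductive control $\|F_{j-1}-F\|<\sum_{k\le j-1}\eta_k$ on $S\cap L_{j-1}$ and with the prescribed jets at $\Lambda$. Apply Lemma \ref{lem:gluing} with $L=L_{j-1}$, $\widehat F=F_{j-1}$, and tolerance $\eta_j$, to obtain an $\Sgot$-immersion $G_j\colon S\cup L_{j-1}\to\C^n$ equal to $F_{j-1}$ on $L_{j-1}$, equal to $F$ outside a small neighbourhood of $L_{j-1}$, and with $\|G_j-F\|<\sum_{k\le j}\eta_k$ on $S$. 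Then invoke Mergelyan with jet interpolation on the compact $\Oscr(\Rcal)$-convex admissible set $L_{j-1}\cup(S\cap L_j)$ to produce $F_j\colon\Rcal\to\C^n$, agreeing with $F$ to order $k(p)$ at every $p\in\Lambda$, approximating $G_j$ within $\eta_{j+1}$ on $S\cap L_j$, and approximating $F_{j-1}$ within $2^{-j}$ on $L_{j-1}$.

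By construction $(F_j)$ is Cauchy on each $L_j$, so $\wt F=\lim_j F_j$ exists and is holomorphic on $\Rcal$. Choosing the Mergelyan approximations sufficiently fine on a slightly larger compact (so that Cauchy estimates give $\Cscr^1$ control on $L_{j-1}$) guarantees that $\wt F'$ stays in the open set $\Sgot_*$, hence $\wt F$ is an $\Sgot$-immersion. Telescoping yields (i), and the jets at $\Lambda$ preserved at each stage yield (ii). For (iii), when $F|_\Lambda$ is injective I strengthen the induction to require $F_j$ injective on $L_j$: since $n\ge 3$, $\Sgot_*$ is Oka and contained in no hyperplane, a generic small perturbation inside each Mergelyan step gives an injective $\Sgot$-immersion on $L_j$ (cf.\ \cite{AlarconForstneric2014IM}), and making subsequent approximations fine enough preserves injectivity in the limit.

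The main obstacle I anticipate is organising the gluing step precisely enough that the accumulated errors, the jet interpolation at $\Lambda$, and (in case (iii)) the injectivity condition are all maintained simultaneously through infinitely many steps. Concretely, the proof of Lemma \ref{lem:gluing} creates finitely many ``trouble points'' on $M\cap bL_{j-1}$ that must be healed inside a small neighbourhood of $bL_{j-1}$ disjoint from $\Lambda$ and from earlier healings; the chosen transversality and disjointness properties of the exhaustion make this feasible. Carleman admissibility is used essentially twice: the bounded E-hulls supply an exhaustion with $L_j\cup S$ $\Oscr(\Rcal)$-convex, and the $\Oscr(\Rcal)$-convexity of $S$ is what lets Mergelyan close each induction step so that the telescoping closes the $\epsilon$-bound.
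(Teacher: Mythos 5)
Your proposal follows essentially the same route as the paper's proof: an induction over a compact exhaustion adapted to $S$ via the bounded E-hull property, alternating the Mergelyan theorem with jet interpolation from \cite{AlarconCastro2018} with Lemma \ref{lem:gluing}, and passing to a limit with telescoping error estimates; the only structural difference is that you glue before applying Mergelyan in each step (the paper does the reverse, keeping the intermediate maps defined only on $S\cup\Omega\cup K_j$), which is immaterial. Two small points to tidy: the gluing step must be invoked with a tolerance function at least as large as the accumulated error $\sum_{k\le j-1}\eta_k$ on $S\cap L_{j-1}$ (not $\eta_j$), exactly as the paper arranges with its piecewise function $\epsilon'$, and you should carry $\Omega$ along in the domain of the glued maps so that the jet data of $F$ at points of $\Lambda$ newly absorbed by $L_j$ is available to the Mergelyan step.
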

\begin{proof}
Consider an exhaustion $\{K_j\}_{j\in\n}$ of $\Rcal$ by compact \( \Oscr(\Rcal) \)-convex subsets  such that
\begin{equation}\label{eq:exhaustion}
	 K_1\Subset K_2 \Subset  K_3 \Subset \cdots\Subset\bigcup_{j\in\n}K_j=\Rcal,
\end{equation}
and that $(K \cup \Omega)\cap bK_j=\emptyset$ for any $j\in\n$.
Recall that since $M$ has bounded E-hulls it is possible to find the sequence of  $\Oscr(\Rcal)$-convex  compacts such that
$(M\cup K )\cap K_j$ is again a $\Oscr(\Rcal)$-convex compact subset of \( \Rcal \)
and such that $M$ intersects $bK_j$ transversely a finite number of times
for each $j\in\n$.
	
Set $F_0:=F\colon M\cup K\cup\Omega\to\c^n$ and \( K_0 = \emptyset \). To prove the theorem we are going to construct a sequence of $\Sgot$-immersions $F_j\colon M \cup K \cup \Omega \cup K_j\to\c^n$ with the following properties for any $j\in\n$:
\begin{enumerate}[\rm (I$_j$)]	
	\item $\| F_{j}(p)- F_{j-1}(p)\|<1/2^j$ for any $p\in K_{j-1}$.
	\smallskip
	\item  $\| F_{j}(p)- F_{j-1}(p)\|<{\epsilon(p)}/{2^j}$ for any $p\in M\cup K$.
	\smallskip
	\item $F_{j}- F_{j-1}$ has a zero of multiplicity (at least) $k(p)$ at each point $p\in\Lambda$.
	\smallskip
	\item If $F|_\Lambda$ is injective, then $F_j|_{K_j}$ is also injective.
\end{enumerate}

Assume first that we have constructed such a sequence. Then properties {\rm (I$_j$)} for $j\in\n$ ensure that there exists a limit map $\wt F\colon \Rcal\to\c^n$, given by
\[
\wt F=\lim\limits_{j\to+\infty} F_j,
\]
and that this map is an $\Sgot$-immersion. 
Properties {\rm (II$_j$)} guarantee condition {\rm (i)}, 
whereas condition {\rm (ii)} follows from properties {\rm (III$_j$)}.
Finally, if $F|_\Lambda$ is injective, then properties {\rm (IV$_j$)} imply condition {\rm (iii)}.

Now we will recursively construct the sequence \( \{ F_j \}_{j \in \N } \).
It is clear that $F_0=F$ verifies the base of the induction. 
Assume now that we have an $\Sgot$-immersion $F_{j-1}\colon M\cup K\cup\Omega \cup K_{j-1}\to\c^n$ satisfying  properties {\rm (I$_{j-1}$)}--{\rm (IV$_{j-1}$)} for some \( j \geq 1 \).

Let $\epsilon'\colon M\cup K \cup K_{j-1}\to\r_+$  be the positive function defined by
\begin{equation}\label{eq:epsilon1}
\epsilon'(p)=	\begin{cases}
\frac{1}{2^j}\epsilon(p) &\quad\text{if } p\in (M \cup K)\setminus K_{j-1},\\
\frac{1}{2^j}\min\{1,\epsilon(p)\} &\quad \text{if } p\in (M\cup K)\cap K_{j-1},\\
\frac{1}{2^j} &\quad\text{if }p\in K_{j-1}\setminus (M\cup K).
\end{cases}
\end{equation}
Since  \( (M\cup K)\cap K_{j}\) is an $\Oscr(\Rcal)$-convex compact admissible subset of $\Rcal$, by \cite[Theorem 1.3]{AlarconCastro2018}, given numbers
\[
0<\delta<\min\{ \epsilon'(q): q \in  (M\cup K)\cap K_{j}\}, \quad \text{and}\quad \max\{k(p):p\in\Lambda\cap K_j \}\in\z_+,
\]
we may approximate $F_{j-1}$ on  \( (M\cup K\cup K_{j-1})\cap K_{j}\) by an $\Sgot$-immersion $\wh F\colon \Rcal\to\c^n$ that satisfies the following:
\begin{enumerate}[\rm (a)]
	\item $\| \wh F(p)-F_{j-1}(p) \|<\delta<\epsilon'(p)$ on \( (M\cup K\cup K_{j-1})\cap K_{j}\),
	\item $\wh F-F_{j-1}$ has a zero of multiplicity (at least) $k(p)$ at each point of $p\in\Lambda$.
	\item If \( F_{j-1}|_{\Lambda}\) is injective, $\wh F$ is injective.
\end{enumerate}

Next, we can apply Lemma \ref{lem:gluing} to a positive continuous function $ M\cup  K \cup K_{j-1}\to\r_+$ less than \( \epsilon' \) and the maps $F_{j-1}\colon M\cup K \cup \Omega \cup K_{j-1}\to \c^n$ and $\wh F\colon \Rcal\to\c^n$ to obtain an $\Sgot$-immersion $F_{j}\colon M\cup K \cup \Omega \cup K_{j}\to\c^n$ such that
\begin{enumerate}[\rm (c)]
	\item[\rm (d)] $F_{j}=\wh F$ on $K_{j}$.\smallskip
	\item[\rm (e)] $\|F_{j}(p)-F_{j-1}(p)\|<\frac{1}{2^j}$ for any $p\in K_{j-1}$.\smallskip
	\item[\rm (f)] $\|F_{j}(p)-F_{j-1}(p)\|<\frac{\epsilon(p)}{2^{j}}$ for any $p\in M\cup K$.\smallskip
	\item[\rm (g)] \( F_{j} = F_{j-1} \) on \( \Omega \setminus K_{j} \).
\end{enumerate}
Property {\rm (g)} is  a consequence of the fact that \( \Omega \) and \( M \cup K \) are separated,  hence we may simply define \( F_{j} =F_{j-1} \) on \( \Omega \setminus K_j \).
Then, we claim that the $\Sgot$-immersion $F_{j}$ verifies the conditions {\rm (I$_{j}$)}, {\rm (II$_{j}$)}, {\rm (III$_{j}$)}, and {\rm (IV$_{j}$)}. Indeed, property {\rm (e)} gives us {\rm (I$_{j}$)} whereas {\rm (II$_{j}$)} follows from {\rm (f)}. Properties {\rm (b)}, {\rm (d)}, and {\rm (g)} imply that condition {\rm (III$_{j}$)} holds. Finally, if we assume that $F|_\Lambda$ is injective, then condition {\rm (IV$_j$)} is a consequence of {\rm (IV$_{j-1}$)} and properties {\rm (c)} and {\rm (d)}.

This finishes the construction of the sequence and concludes the proof of the theorem.
\end{proof}

\begin{remark}\label{rem:avoidtopology} \label{fattening}
{The approximation and interpolation result  \cite[Theorem 1.3]{AlarconCastro2018} 
is stated for smoothly bounded compact domains but also applies for general admissible subsets. For we may apply \cite[Proposition 5.2]{AlarconCastro2018} in order to get an $\Sgot$-immersion defined on a smoothly bounded compact domain that contains the  initial admissible subset and then \cite[Theorem 1.3]{AlarconCastro2018}  gives the desired $\Sgot$-immersion.

Notice also that although \cite[Theorem 1.3]{AlarconCastro2018} gives us jet-interpolation of a fixed order \( k \)  at each point of \( \Lambda \), slight modifications in the proof of this result would yield jet-interpolation to an arbitrary order  $k(p)\in\z_+$ depending on the point $p\in\Lambda$. However,  it is enough for our reasoning to apply the original result at each step of the induction to the largest order for the points treated up to this step.
}
\end{remark}

\begin{remark}\label{rem:necesity}
{	Following the proof of  \cite[Theorem 1.2]{magnusson2016} we   can see that the conclusion of Theorem \ref{th:Carleman} implies \( S \) has bounded  E-hulls and therefore this hypothesis is necessary.
	Suppose \( S \) does not have bounded E-hulls.  Then there is some compact set \( L \) so that \( h(L \cup S )\) is non-compact.
	Therefore there exists a sequence \( \Lambda =\{ x_j\}_{j \in \N}\subset h(L \cup S) \) without a point of accumulation.
	Let \(  \Omega \supset \Lambda \) be the union of disjoint coordinate neighbourhoods  which each contain precisely one element of \( \Lambda \)  such that the closure of \( \Omega \) does not intersect \( S\).
	
	Define a map \( F\colon S \cup \Omega \to \C^n \) as follows.
	On the connected component of \( \Omega \) containing \( x_j \) we define \( F \) as some \( \Sgot\)-immersion with \( \|F(x_j)\|>j\). On each connected component of \( K \) we define \( F \) to be some \( \Sgot \)-immersion such that \( \| F(x)\| < 1/4\) for every \( x \in K \)  and such that \( F \) extends as an \( \Sgot\)-immersion to a small neighbourhood of \( K \). Finally using the gluing Lemma we extend to a map on \( S \) with \( \| F(x)\| \leq 1/2 \) for every \( x \in S \).  We now apply Theorem  \ref{th:Carleman} to obtain an \( \Sgot \)-immersion \( \wt F\colon \Rcal \to \C^n \) such that \( \| F(p)\| < 1\) for every $p\in S$ and \( \| F(x_j)\| > j\) for every \( j \in \N\). 
	Let \( C = \max_{x \in L} \| F(x)\| \).
	Given a natural number \( j > C \) we have that \( \|F(x_j)\|> j> C\) which contradicts the assumption that \( \Lambda \subset h(L \cup S) \). Therefore \( S \) must have bounded exhaustion hulls for the conclusion of Theorem \ref{th:Carleman} to hold.}
\end{remark}

Theorem \ref{th:Carleman} gives an approximation result for directed immersions in {Carleman admissible subsets}, see Definition \ref{def:Cadmissible}, but we may prove a similar approximation result for more general subsets.

\begin{definition}\label{def:totallyrealset}
	We say that \( M \subset \Rcal \) is a totally real set (of class \( \Cscr^\infty \)) if \( M \) is closed and locally contained in a smooth real submanifold of \(\Rcal \), 
	that is, for every point \( x \in M \) there exist an open neighbourhood \( U \) of \( x \) and a 
	closed \( \mathscr{C}^\infty \)   real submanifold \( \wt M \subset U \) such that \(  M \cap U \subset \wt M \).
\label{def:stratified}
A closed subset $M\subset \Rcal$ is called a stratified totally real set if $M$ is the increasing union $M_0\subset M_1\subset \cdots\subset M_k=M$ of closed subsets of $\Rcal$, such that ${M_j\setminus M_{j-1}}$ is a totally real set for any $j=0,\ldots,k\in\n$, where \( M_{-1} := \emptyset \).
	
	We will refer to the set \( \overline{{M_j\setminus M_{j-1}}} \)   for  \( j =0, \dots, k \) as the strata of \( M \) with respect to the stratification \( \{ M_j\}_{0 \leq j \leq k} \). 
	For convenience, whenever we say $M$ is a stratified set, we will assume that we have fixed a stratification like the one above so that we may refer to the strata of \( M \) without having to introduce more notation.
\end{definition}

Let \( M \subset \Rcal \) be a totally real set. We say that \( F\colon M \to \C^n \) is an \( \Sgot \)-immersion if locally \( F \) looks like the restriction of an \( \Sgot \) immersion, that is, for each \( p \in M \) there is a neighbourhood \( U \ni p \), a 
closed \( \mathscr{C}^\infty \)   real submanifold \( \wt M \subset U \) such that
\[
	M \cap U  \subset \wt M \subset U,
\]
and an \( \Sgot \)-immersion \( \wt F \colon \wt M \to \C^n \) such that \(F(z) = \wt F(z) \) for every \( z \in M \cap U \). 
An \( \Sgot \)-immersion on a stratified totally real set \( M \subset \Rcal \) is simply a map \( F\colon M \to \C^n \) such that $F$ restricted to a strata of \( M \) is an \( \Sgot \)-immersion in the sense described above.
If $M\cup K\subset \Rcal$ is a subset where \( K \) is a locally finite union of smoothly bounded pairwise disjoint connected compact subsets and \( M \) is a stratified totally real subset then a map \( F\colon M \cup K \to \C^n \) is said to be an $\Sgot$-immersion if \( F|_{M}\) and \( F|_{K}\) are \( \Sgot\)-immersions.

\begin{corollary}\label{co:stratified}
	Theorem \ref{th:Carleman} holds if $M$ is a stratified real $\Oscr(\Rcal)$-convex subset of $\Rcal$ for which the intersection of any two strata is discrete, and $S=M\cup K$ has still bounded E-hulls.
\end{corollary}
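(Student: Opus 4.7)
The plan is to repeat the inductive construction in the proof of Theorem \ref{th:Carleman} essentially verbatim, adjusting only the Mergelyan step to accommodate the fact that the top stratum of $M$ is now a finite union of smooth arcs meeting at a discrete set of crossing points rather than a pairwise disjoint union of Jordan arcs.

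First, since $S=M\cup K$ is $\Oscr(\Rcal)$-convex with bounded E-hulls we construct, just as in the proof of Theorem \ref{th:Carleman}, an exhaustion $\{K_j\}_{j\in\n}$ of $\Rcal$ by $\Oscr(\Rcal)$-convex compacts such that each $(M\cup K)\cap K_j$ is again $\Oscr(\Rcal)$-convex, and such that $bK_j$ misses $K\cup\Omega$, is transverse to the $1$-dimensional stratum of $M$, and avoids the (discrete) set of crossing points of strata. Such a choice is possible because totally real subsets of the Riemann surface $\Rcal$ have real dimension at most one and the strata-crossings form a discrete set, so a generic small perturbation of each $K_j$ achieves the desired transversality. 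With this exhaustion in hand, the gluing Lemma \ref{lem:gluing} applies without modification: gluing is performed only near the finitely many transversal intersection points $M\cap bK_j$, which by construction lie away from all strata-crossings.

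The main obstacle is upgrading the Mergelyan-with-jet-interpolation step \cite[Theorem 1.3]{AlarconCastro2018} from admissible to stratified totally real subsets of the form $(M\cup K\cup K_{j-1})\cap K_j$. To handle this, around each of the finitely many crossing points $p\in K_j$ we choose a small smoothly bounded coordinate disc $D_p\Subset \mathring K_j$, disjoint from $K\cup\Omega\cup bK_j$ and from the other crossings, and we use \cite[Lemma 3.3]{AlarconCastro2018} along each arc emanating from $p$ to modify $F_{j-1}$ near $p$ so that it extends to an $\Sgot$-immersion on $D_p$ agreeing with $F_{j-1}$ outside a slightly smaller disc. The enlarged set
\[
L_j := \big((M\cup K\cup K_{j-1})\cap K_j\big)\cup\bigcup_p D_p
\]
is then, for sufficiently small $D_p$, a compact admissible $\Oscr(\Rcal)$-convex subset in the sense already used in Theorem \ref{th:Carleman}, and the modified $F_{j-1}$ is an $\Sgot$-immersion on $L_j$. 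Applying \cite[Theorem 1.3]{AlarconCastro2018} to $L_j$ (and invoking Remark \ref{fattening}) yields the required global approximation $\wh F\colon\Rcal\to\c^n$ with jet interpolation along $\Lambda\cap K_j$ and injectivity when $F|_\Lambda$ is injective.

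The remainder of the argument---gluing via Lemma \ref{lem:gluing} to produce $F_j\colon M\cup K\cup\Omega\cup K_j\to\c^n$, verification of the inductive conditions $(\mathrm{I}_j)$--$(\mathrm{IV}_j)$, and passage to the limit to obtain $\wt F$---is then identical to that of Theorem \ref{th:Carleman}, and properties (i)--(iii) follow exactly as there.
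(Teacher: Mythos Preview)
Your argument is correct, but the paper takes a shorter route. Both proofs rest on the same key observation: at each crossing point of two strata one can fatten $M$ by a small smoothly bounded disc so that the resulting set becomes (Carleman) admissible in the original sense, and then extend the given $\Sgot$-immersion across these discs using \cite[Lemma 3.3]{AlarconCastro2018}. The difference lies in \emph{when} this fattening is done. You re-enter the inductive proof of Theorem \ref{th:Carleman} and, at each step $j$, fatten the finitely many crossings inside $K_j$ before invoking Mergelyan. The paper instead performs the fattening once, globally: it lets $Q$ be the (discrete) set of all crossing points, chooses small simply connected compacts $K_q$ around each $q\in Q$, sets $S'=M\cup K\cup\bigcup_{q\in Q}K_q$, extends $F$ to an $\Sgot$-immersion on $S'\cup\Omega$, and then applies Theorem \ref{th:Carleman} to $S'$ as a black box. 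This reduces the corollary to a one-line application of the theorem already proved, whereas your version effectively reproves the theorem with a modification at each step. Your approach has the minor advantage of only ever handling finitely many crossings at a time; the paper's buys a much shorter argument.
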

\begin{proof}
First observe that the proof of Theorem \ref{th:Carleman} goes through in  precisely the same way if we assume \( M \) is a totally real set rather than a finite disjoint union of closed real submanifolds.
For in the induction procedure we could simply choose a compact exhaustion of \( \Rcal \) with many  compact sets so that  \( M \cap (K_j \setminus K_{j-1}) \) is contained within a real submanifold for each \( j \in \N \) and then proceed as we did before.

Now let us consider the case where $M$ is a stratified real subset of $\Rcal$ for which the intersection of any two strata is discrete.
	
	Let \(Q\) be the set of points at which two (or more) strata intersect.
By assumption \( Q \) is discrete.
	Around each point $q\in Q$ consider a small simply connected compact neighbourhood $K_q\subset \Rcal$ such that $M \cup K_q$ is a closed $\Oscr(\Rcal)$-convex subset and such that $K_q\cap \Omega=\emptyset$.
	Then the subset defined by
	\[
		S':=M\cup K\cup \bigcup_{q\in Q}K_q
	\]
	is a Carleman admissible subset. 
	Next, we extend $F$ to $S'$ being an $\Sgot$-immersion $S'\cup\Omega\to\c^n$. For instance, we may choose an appropriate disc at each $K_q$ and then use Lemma \ref{lem:gluing} to glue around the {\em trouble points} of $M\cap K_q$. If necessary, shrink the subset $K_q$ for any $q\in Q$ and recall that since $M$ is a stratified subset then the number of {\em trouble point} at each $K_q$ is finite.
	Finally Theorem \ref{th:Carleman} provides the desired $\Sgot$-immersion $\wt F\colon \Rcal\to\c^n$.
\end{proof}

\section{Carleman approximation by complete immersions}\label{sec:completeness}

This section is dedicated to proving Theorem \ref{th:completeness2} from which Theorem \ref{th:completeness} is a direct consequence. As mentioned already, the techniques based on the ideas of Jorge and Xavier \cite{JorgeXavier1980AM} and Nadirashvili \cite{Nadirashvili1996IM} which are usually used in order to get completeness  do not apply in our particular case; therefore we develop a new approach to ensure completeness of the solutions.

\begin{theorem}\label{th:completeness2}
	The $\Sgot$-immersion $\wt F\colon \Rcal\to\c^n$, $n\ge 3$, constructed in  Theorem \ref{th:Carleman} may be chosen to be complete 
	provided that $\Sgot$ satisfies the hypotheses of Theorem \ref{th:completeness}.
\end{theorem}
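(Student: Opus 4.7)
The plan is to refine the recursive construction of Theorem~\ref{th:Carleman} so that each stage also increases, by a definite amount, the Euclidean length of the image of the immersion on a \emph{barrier} that every divergent path must eventually cross. At stage $j$, in addition to the usual approximation on $K_{j-1}$ and $(M\cup K)\cap K_j$, we proceed as follows. Inside the annular region $K_j\setminus K_{j-1}$ we choose a finite, pairwise disjoint collection $\Gamma_j$ of short smooth Jordan arcs disjoint from $M\cup K\cup\Omega$, together with, whenever $M\cap(K_j\setminus K_{j-1})\neq\emptyset$, finitely many short sub-arcs $\Gamma_j^M\subset M$ in the same annulus. These are selected so that every continuous path in $\Rcal$ from $bK_{j-1}$ to $bK_j$ either meets an element of $\Gamma_j$ or contains an element of $\Gamma_j^M$; such a barrier exists because $M$ is a smooth proper embedding of $\r$ and $K_j\setminus K_{j-1}$ is a compact region (after mildly fattening the $K_j$'s if necessary).

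Next, we extend the current $\Sgot$-immersion to $\Gamma_j\cup\Gamma_j^M$ so that the image of each such arc has Euclidean length at least $2$. The hypothesis that $\pi_1\colon \Sgot\to\c$ admits a local holomorphic section $h$ near $0\in\c$ with $h(0)\neq 0$ yields, after scaling, points in $\Sgot_*$ of arbitrarily large norm; combining this with the Oka hypotheses on $\Sgot_*$ and on $\Sgot\cap\{z_1=1\}$, we can prescribe an $\Sgot$-immersion along each short arc whose derivative has norm as large as desired, so that its image has prescribed large Euclidean length. Lemma~\ref{lem:gluing} then matches the boundary data with the $\Sgot$-immersion already constructed on $M\cup K\cup K_{j-1}$.

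Setting $S':=S\cup\bigcup_{j\in\n}(\Gamma_j\cup\Gamma_j^M)$, one verifies that $S'$ remains a Carleman admissible subset of $\Rcal$: the collection of added arcs is locally finite, the resulting set is still holomorphically convex, and bounded E-hulls are preserved since each added arc sits in a controlled region of $K_j\setminus K_{j-1}$. We then apply Theorem~\ref{th:Carleman} to $S'$, requiring the error function $\epsilon'$ to satisfy $\epsilon'\le 2^{-j-2}$ on every arc introduced at stage $j$; the cumulative perturbation on such an arc is then smaller than $1$, so its $\wt F$-length is at least $1$.

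Completeness follows easily: a divergent path $\alpha$ in $\Rcal$ leaves every compact set, so for all sufficiently large $j$ it contains a sub-arc from $bK_{j-1}$ to $bK_j$; by construction each such sub-arc meets an arc in $\Gamma_j\cup\Gamma_j^M$, and thus contributes length at least $1$ to $\wt F\circ\alpha$. Summing over $j$ gives $\length(\wt F\circ\alpha)=\infty$. The principal obstacle is the length-enlarging extension described in the second paragraph: one must prescribe a derivative in $\Sgot_*$ of arbitrarily large norm along short arcs while remaining compatible with the subsequent Carleman approximation, and this is precisely where the two additional hypotheses on $\Sgot$ enter, explaining why Theorem~\ref{th:Carleman} by itself does not automatically yield completeness.
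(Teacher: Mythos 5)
Your proposal has a genuine gap in its central mechanism. You claim that inside the annulus $K_j\setminus \mathring K_{j-1}$ one can choose a \emph{finite, pairwise disjoint} collection of short Jordan arcs $\Gamma_j\cup\Gamma_j^M$ such that every path from $bK_{j-1}$ to $bK_j$ meets one of them. This is false: a finite union of pairwise disjoint arcs never separates the two boundary components of an annulus (its complement always contains a path joining them), so a divergent path can simply avoid your barrier. This is exactly why the Jorge--Xavier/Nadirashvili labyrinths consist of nested, nearly closed curves rather than short disjoint arcs, and it is the reason the paper states that the labyrinth method is not available here (the presence of $M$ obstructs it). Even if a separating barrier existed, there is a second, independent error: a path that merely \emph{meets} an arc of $\Gamma_j$ at a point acquires no length from the fact that the image of that arc under $\wt F$ is long. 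Length of $\wt F\circ\alpha$ is controlled by how far the \emph{image of $\alpha$} must travel, not by the length of $\wt F$ on some arc that $\alpha$ happens to intersect. Only for the sub-arcs of $M$ that $\alpha$ actually \emph{contains} would your estimate apply, and a general divergent path need not travel along $M$ at all.

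The paper's proof avoids both problems with a dichotomy rather than a barrier. It fixes the first coordinate: using the hypothesis that $\pi_1\colon\Sgot\to\c$ has a local section and that $\Sgot\cap\{z_1=1\}$ is Oka, the approximation step (via \cite[Proposition 5.2]{AlarconCastro2018}) produces $F_j$ with $\pi_1\circ F_j=\pi_1\circ F_{j-1}$ \emph{exactly} on $K_j$. One first arranges that $\length(\pi_1\circ F_{j-1}(\alpha))>1$ for every arc $\alpha\subset M\cap A$ crossing the annulus, hence for every crossing arc in a thin neighbourhood $U$ of $bL\cup(M\cap A)$. Then one places finitely many \emph{two-dimensional} simply connected compact blocks $D\subset A\setminus M$ so that any crossing arc avoiding $D$ must lie in $U$, and on $D$ one shifts the coordinates $2,\dots,n$ by a large constant $\zeta$. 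A crossing arc that enters $D$ then has a long image because its image must jump from the unshifted to the shifted values; a crossing arc that avoids $D$ lies in $U$ and its first coordinate projection alone is long, and this survives to $F_j$ because $\pi_1$ is preserved exactly (not just approximately). Note that the role of the extra hypotheses on $\Sgot$ is precisely this coordinate-fixing, not the production of large derivatives along arcs as in your sketch (for that, the conical structure of $\Sgot$ already suffices). If you wish to salvage a barrier-type argument, you would need separating sets of the paper's kind (two-dimensional blocks carrying a large constant shift), together with exact control of one coordinate on the complementary thin region.
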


\begin{proof}
	Consider an exhaustion $\{K_j\}_{j\in\n}$ of $\Rcal$ by compact \( \Oscr(\Rcal) \)-convex subsets as in the proof of Theorem \ref{th:Carleman}. Recall that the sequence verifies \eqref{eq:exhaustion} and that $(K \cup \Omega)\cap bK_j=\emptyset$ for any $j\in\n$. In addition, since $M$ has bounded E-hulls we may assume that $(M\cup K )\cap K_j$ is again an $\Oscr(\Rcal)$-convex compact subset of \( \Rcal \) and also that $M$ intersects $bK_j$ transversely a finite number of times for each $j\in\n$. 
	
	Fix any point $p_0\in \mathring K_1\setminus \Omega$. Set $F_0:=F\colon M\cup K\cup\Omega\to\c^n$ and \( K_0 = \emptyset \). To prove the theorem we construct a sequence of $\Sgot$-immersions $F_j\colon M \cup K \cup \Omega \cup K_j\to\c^n$ with the following properties for any $j\in\n$:
	\begin{enumerate}[\rm (I$_j$)]	
		\item $\| F_{j}(p)- F_{j-1}(p)\|<1/2^j$ for any $p\in K_{j-1}$.
		\smallskip
		\item  $\| F_{j}(p)- F_{j-1}(p)\|<{\epsilon(p)}/{2^j}$ for any $p\in M\cup K$.
		\smallskip
		\item $F_{j}- F_{j-1}$ has a zero of multiplicity (at least) $k(p)$ at each point $p\in\Lambda$.
		\smallskip
		\item If $F|_\Lambda$ is injective, then $F_j|_{K_j}$ is also injective. 
		\smallskip
		\item The distance ${\dist}_{F_j}(p_0,bK_j)>j-1$.
	\end{enumerate}
	
	Clearly, reasoning as in Theorem \ref{th:Carleman}, properties {\rm (I$_j$)}--{\rm (IV$_j$)} guarantee the existence of an $\Sgot$-immersion $\wt F\colon \Rcal\to\c^n$ that verifies Theorem \ref{th:Carleman}. Furthermore, due to condition {\rm (V$_j$)}, any divergent path starting at $p_0$ has infinite length, and hence $\wt F$ is complete.

	To construct the sequence and prove the theorem we reason inductively. Assume that we have constructed an $\Sgot$-immersion $F_{j-1}\colon M \cup K \cup \Omega \cup K_{j-1}\to\c^n$ that satisfies properties {\rm (I$_{j-1}$)}--{\rm (V$_{j-1}$)} for some $j\in\n$ and let us construct $F_j$.

	First, by Theorem \ref{th:Carleman} we may assume that $F_{j-1}$ is defined in all $\Rcal$.
	Next, consider an $\Oscr(\Rcal)$-convex compact subset $L$ with $K_{j-1}\Subset L\Subset K_j$ such that $(\Omega\cup K)\cap K_j\subset \mathring L$, $L$ is a strong deformation retract of $K_j$,  and 
	\begin{equation}\label{eq:goodintersections}
	\begin{array}{c}
	\text{		$M\cap (K_j\setminus \mathring L)$ consists of finitely many pairwise disjoint Jordan arcs}\\
	\text{connecting $bL$ to $bK_j$ and meeting each boundary transversely.}
	\end{array}
	\end{equation}
	This may be achieved by enlarging the subset $L$ as much as necessary.
	
	Then $\chi(K_j\setminus \mathring L)=0$ and so $K_j\setminus \mathring L$ consists of a finite union of pairwise disjoint annuli.
	To make the proof more readable we assume $K_j\setminus \mathring L$ is connected, hence a single annulus. If that is not the case, just apply the following reasoning to each of the annuli in $K_j\setminus \mathring L$. Set $A:=K_j\setminus \mathring L$.
	
	We may assume that if $\alpha$ is an arc contained in $M\cap A$ that connects $bL$ to $bK_j$ (take into account \eqref{eq:goodintersections}), then the length of the first coordinate projection of $\alpha$ is as large as desired. Concretely we would assume that 
	if $\alpha$ is an arc connecting $bL$ to $bK_j$ then 
	\begin{equation}\label{eq:length1}
	\length\big(\pi_1\circ F_{j-1}(\alpha)\big)
	>1.
	\end{equation}
	See Figure \ref{fig:complete}.
	If this were not the case, then we could approximate $F_{j-1}|_{M \cap A}$ in the $\Cscr^0(M\cap A)$-topology by a sufficiently long  arc,   glue with $F_{j-1}$ at the endpoints using \cite[Lemma 3.3]{AlarconCastro2018}, and  apply Theorem \ref{th:Carleman} to obtain a globally defined map with the desired property.

	By continuity of $F_{j-1}$, there exists a neighbourhood $U\subset A$ of $bL\cup (M\cap A)$ such that any arc $\alpha\subset U$ connecting $bL$ to $bK_j$, 
	still verifies \eqref{eq:length1}.
	Let $D\subset A\setminus M$ be a collection of pairwise disjoint simply connected compact subsets such that if $\alpha\subset A$ is an arc connecting $bL$ to $bK_j$ that does not intersect $D$ then $\alpha\subset U$ and so it verifies \eqref{eq:length1}.
	See Figure \ref{fig:complete}.
	\begin{figure}[h]
		\centering
		\includegraphics[width=13.5cm]{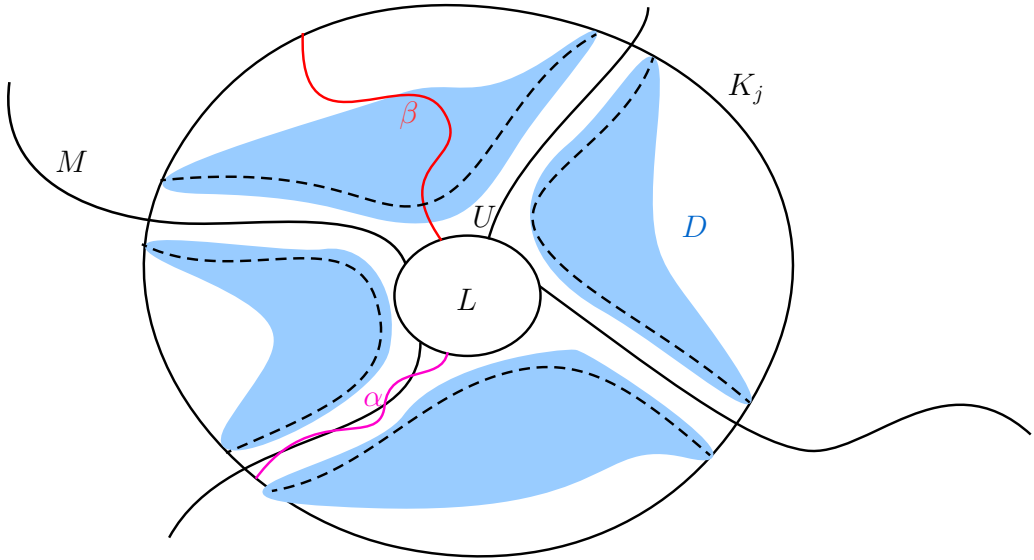}
		\caption{The set $D$ with $4$ connected components and an arc $\beta$ intersecting $D$ and an arc $\alpha$ that does not intersect $D$.}
		\label{fig:complete}
	\end{figure}
	
	Notice that since $D\subset A\setminus M$ is a union of pairwise disjoint simply connected compact subsets then $M\cup L\cup D$ is $\Oscr(\Rcal)$-convex. Let $H\colon (M\cup L\cup D)\cap K_j\to\c^n$ be an $\Sgot$-immersion such that
	\begin{enumerate}[\rm (a)]
		\item $H=F_{j-1}$ on $(M\cup L)\cap K_j$,
		\item the first coordinate $\pi_1(H)=\pi_1(F_{j-1})$ on $D$,
	\end{enumerate} 
and for $2\le a \le n$ we have $\pi_a(H)=\pi_a(F_{j-1})+\zeta$ where $\zeta\in\c$ is a constant  sufficiently large such that if $\alpha\subset A$ is an arc connecting $bL$ to $bK_j$ that intersects $D$ then
		\begin{equation}\label{eq:length}
		\length \big(H(\alpha)\big)
		>1.
		\end{equation}
	
	Observe that the first coordinate of $H$ extends to $K_j$ since $\pi_1(H)=\pi_1(F_{j-1})$. Consider the positive function $\rho\colon M\cup L \cup D \to\r_+$ defined by
	\begin{equation}\label{eq:rho}
	\rho(p)=	\begin{cases}
	\frac{1}{2^j}\epsilon(p) &\quad\text{if } p\in (M\cup L\cup D)\setminus K_{j-1},\\
	\frac{1}{2^j}\min\{1,\epsilon(p)\} &\quad \text{if } p\in (M\cup L\cup D)\cap K_{j-1},\\
	\frac{1}{2^j} &\quad\text{if }p\in K_{j-1}\setminus (M\cup L\cup D).
	\end{cases}
	\end{equation}
	Given a positive number $0<\delta<\rho(p)$ for any $p\in (M\cup L\cup D)\cap K_j$, \cite[Proposition 5.2]{AlarconCastro2018} applied to the data
	\[
		(M\cup L\cup D)\cap K_j\subset K_j,\quad \Lambda\cap K_j\subset L,\quad\text{and}\quad \max\{k(p):p\in\Lambda \}\in\z_+
	\]
	and the $\Sgot$-immersion $H$, provides an $\Sgot$-immersion $\wh H\colon K_j\to\c^n$ that verifies the following properties:
	\begin{enumerate}[\rm (a)]
		\item[\rm (c)] $\|\wh H(p)-H(p)\|<\delta<\rho(p)$ for any $p\in (M\cup L\cup D)\cap K_j$.
		\item[\rm (d)] $\wh H-H$ has a zero of multiplicity $k(p)\in\n$ for any $p\in \Lambda\cap K_j\subset L$.
		\item[\rm (e)] $\pi_{1}(\wh H)=\pi_1(H)=\pi_1(F_{j-1})$ on $K_j$.
	\end{enumerate}
	Note that \cite[Proposition 5.2]{AlarconCastro2018} provides jet-interpolation of fixed order so we have applied the result to the integer $\max\{k(p):p\in\Lambda \}\in\z_+$ to ensure property {\rm (e)}.
	Moreover, by \cite[Theorem 1.3]{AlarconCastro2018} we may suppose also that
	\begin{enumerate}[\rm (a)]
		\item[\rm (f)] If $H|_\Lambda$ is injective, then $\wh H$ is also injective.
	\end{enumerate}
	
	Finally, by properties {\rm (a)} and {\rm (c)} we have that Lemma \ref{lem:gluing}, applied to glue $\wt H$ with $F_{j-1}$, provides an $\Sgot$-immersion $F_{j}\colon M \cup K \cup \Omega \cup K_j\to\c^n$ that satisfies the following properties.
	\begin{enumerate}[\rm (a)]
		\item[\rm (g)] $F_j=\wh H$ on $K_j$.
		\item[\rm (h)] $\|F_j(p)-  F_{j-1}(p)\|<\rho(p)$ for any $p\in M \cup K \cup \Omega\cup K_j$.
	\end{enumerate}
	Note that since $(\Omega\cup K)\setminus K_j$ and $K_j$ are separated we have extended $F_j=F_{j-1}$ on $(\Omega\cup K)\setminus K_j$.
	
	We claim that the $\Sgot$-immersion $F_j$ recently constructed satisfies the desired conditions {\rm (I$_j$)}--{\rm (V$_j$)}. Indeed, condition {\rm (I$_j$)} and {\rm (II$_j$)} follows from {\rm (h)} and \eqref{eq:rho}. Properties {\rm (a)}, {\rm (d)}, and {\rm (g)} imply that condition {\rm (III$_j$)} holds. 
	On the other hand, if $F$ is injective then it is clear that condition {\rm (IV$_j$)} is a consequence of {\rm (f)}, {\rm (g)}, {\rm (a)}, and {\rm (IV$_{j-1}$)}.

	Finally, we will verify that the completeness condition {\rm (V$_j$)} is a consequence of {\rm (V$_{j-1}$)}, properties {\rm (b)}, {\rm (c)} and {\rm (e)}, and equations \eqref{eq:length1} and \eqref{eq:length}. 	
 To this end, take an arc $\gamma\subset K_j$ connecting $p_0$ to $bK_j$. There exists a connected subarc $\alpha\subset \gamma$ that connects $bL$ to $bK_j$ and $\alpha\subset K_j\setminus \mathring{L}$. 
	
	If $\alpha\cap D\neq\emptyset$, then provided that the approximation in {\rm (h)} is good enough we have that:
	\begin{eqnarray*}
	\length(F_j(\gamma)) & = & \length\big(F_j(\gamma\setminus\alpha)\big) +\length\big(F_j(\alpha)\big) \\
		& \stackrel{\text{{\rm (h)}\&$(K_{j-1}\subset L)$}}{\ge} & {\dist}_{F_{j-1}}(p_0,bK_{j-1}) +\length\big(F_j(\alpha)\big)\\
		& \stackrel{\text{\rm (IV$_{j-1}$)}}{>} & (j-2) + \length\big(F_j(\alpha)\big)\\
		& \stackrel{\text{{\rm (g)}\&{\rm (c)}\&\eqref{eq:length}}  }{>} & (j-2)+1 = j-1.
	\end{eqnarray*}
	On the other hand, if $\alpha\cap D=\emptyset$ then $\alpha\subset U$, hence:
	\begin{eqnarray*}
	\length(F_j(\gamma)) & = & \length\big(F_j(\gamma\setminus\alpha)\big) +\length\big(F_j(\alpha)\big) \\
	& \stackrel{\text{{\rm (h)}\&$(K_{j-1}\subset L)$}}{\ge} & {\dist}_{F_{j-1}}(p_0,bK_{j-1}) +\length\big(F_j(\alpha)\big)\\
	& \stackrel{\text{\rm (IV$_{j-1}$)}}{>} & (j-2) + \length\big(\pi_1\circ F_j(\alpha)\big)\\
	& \stackrel{\text{{\rm (a)}\&{\rm (c)}\&{\rm (g)}\&\eqref{eq:length1}}  }{>} & (j-2)+1 = j-1.
	\end{eqnarray*}
	This proves {\rm (V$_j$)} and concludes the proof of the existence of the sequence.
\end{proof}

\section{Carleman approximation by proper immersions}\label{sec:properness}

This section is devoted to proving Theorem \ref{th:properness2}, concerning properness of the approximating maps, from which Theorem \ref{th:properness} is an immediate consequence. Here we adapt the ideas that appear in \cite{AlarconLopez2012JDG,AlarconCastro2018} to our particular case.
Theorem \ref{th:properness2} is going to be proven using a recursive process
similar to that used to prove our previous results.
The new idea is to construct a sequence of $\Sgot$-immersions, defined on an increasing sequence of domains as before, whose values on compact bands $K_j \backslash \mathring K_{j-1}$ grow like the proper map to be approximated  $F: K \cup M \rightarrow \C^n$.
To achieve it we need the following lemma.

\begin{lemma} \label{lem:propernesslemma}
	Assume that $\Sgot\subset \c^n$, $n\ge 3$, verifies the assumptions of Theorem \ref{th:properness}.
	Let $\Rcal$ be an open Riemann surface. Let $K\Subset L \subset \Rcal$ be smoothly bounded compact \( \Oscr(\Rcal) \)-convex subsets such that $K$ is a strong deformation retract of $L$, let also $\Lambda\subset \mathring K$ be a finite subset, and let $S:=M\cup K\subset \Rcal$ be a Carleman admissible subset 
	{and such that $M\cap (L\setminus \mathring K)$ consists on finitely many pairwise disjoint Jordan arcs connecting $bK$ to $bL$ transversely}.
	Then, given an $\Sgot$-immersion $F\colon M\cup K\to\c^n$, 
	a map $k\colon \Lambda\to\z_+$, and a positive function $\epsilon\colon M\cup K\to\r_+$,
	there exists an $\Sgot$-immersion $\wt F\colon M\cup L\to\c^n$
 such that
	\begin{enumerate}[\rm (a)]
		\item $\| \wt F(p)- F(p)\|<\epsilon(p)$ for any $p\in M\cup K$.
		\smallskip
		\item $\wt F- F$ has a zero {of multiplicity $k(p)$} at each point $p\in\Lambda$.
		\smallskip
		\item $\|\wt F(p)\|_\infty > \frac12\min \{ \|F(q)\|_\infty : q\in bK\cup(M\cap(L\setminus K)) \},\quad p\in \mathring L\setminus K$.
		\smallskip
		\item $\|\wt F(p)\|_\infty > \frac12\min \{ \|F(q)\|_\infty : q\in M\cap bL\},\quad p\in bL.$
	\end{enumerate}
\end{lemma}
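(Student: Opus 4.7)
The plan is to construct $\wt F$ in two stages. First, extend $F$ from $M\cup K$ to an $\Sgot$-immersion $F_0$ on $M\cup L$ via Mergelyan-type approximation with jet interpolation. Second, modify $F_0$ in the band $L\setminus \mathring K$ to enforce the lower bounds (c) and (d) on $\|\wt F\|_\infty$, by pumping up one coordinate at a time on each disk-piece of the band, using the local section hypothesis on the coordinate projections $\pi_j\colon\Sgot\to\c$.

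For the first stage, since $(M\cup K)\cap L$ is a compact admissible subset sitting inside the $\Oscr(\Rcal)$-convex compact $L$, we apply \cite[Proposition 5.2]{AlarconCastro2018} to pass to a smoothly bounded compact neighbourhood and then \cite[Theorem 1.3]{AlarconCastro2018} (see Remark \ref{rem:avoidtopology}) to obtain an $\Sgot$-immersion $F_0\colon M\cup L\to\c^n$ approximating $F$ within $\epsilon/2$ on $M\cup K$ and interpolating to order $k(p)$ at each $p\in\Lambda$. Writing $C_1:=\tfrac12\min\{\|F(q)\|_\infty:q\in bK\cup(M\cap(L\setminus K))\}>0$ and $C_2:=\tfrac12\min\{\|F(q)\|_\infty:q\in M\cap bL\}>0$ and shrinking the approximation if necessary, the lower bounds (c) and (d) already hold on a thin tubular neighbourhood $U\subset L$ of $bK\cup(M\cap(L\setminus K))\cup(M\cap bL)$ by continuity.

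For the second stage, decompose the band $L\setminus\mathring K$ into its finitely many closed annular components and subdivide each further along the arcs of $M$ into closed topological disks $\{D_\ell\}$. On each $D_\ell$, choose a coordinate index $j=j(\ell)\in\{1,\ldots,n\}$. Using the local holomorphic section $h_j$ of $\pi_j$ near $0\in\c$ (with $h_j(0)\neq 0$) together with the Oka property of the slice $\Sgot\cap\{z_j=1\}$, we invoke \cite[Lemma 3.3]{AlarconCastro2018} iteratively along a finite collection of arcs covering $D_\ell\setminus U$ to reroute the derivative of $F_0$ through the ``large $|z_j|$'' region of $\Sgot_*$; this produces an $\Sgot$-immersion $G_\ell\colon D_\ell\to\c^n$ which agrees with $F_0$ (along with its $1$-jet) on $bD_\ell\cap U$ and satisfies $|\pi_j(G_\ell(p))|>\max\{C_1,C_2\}+1$ for all $p\in D_\ell\setminus U$. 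Gluing each $G_\ell$ to $F_0$ via Lemma~\ref{lem:gluing} yields the desired $\Sgot$-immersion $\wt F$ on $M\cup L$. Since all modifications occur strictly inside $\mathring L\setminus K$, disjoint from $M\cup K$ and from $\Lambda\subset\mathring K$, conditions (a) and (b) are inherited from $F_0$, while (c) and (d) follow by combining the pumping estimate on $D_\ell\setminus U$ with the continuity estimate on $U$.

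The principal obstacle is ensuring that the modifications performed on adjacent disks $D_\ell$ are mutually compatible across their shared arcs in $M$ without disturbing the approximation of $F$ on $M\cup K$; this is handled by performing all reroutings strictly inside each $\mathring D_\ell$ and appealing once more to Lemma~\ref{lem:gluing} to absorb residual discrepancies along the shared arcs. Because $n\geq 3$ and any coordinate direction satisfies the hypothesis, the choice of $j(\ell)$ gives more than enough freedom to carry out this pumping simultaneously on all disks.
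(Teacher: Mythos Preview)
Your second stage contains a genuine gap: the mechanism you describe does not actually produce an $\Sgot$-immersion on the two-dimensional disk $D_\ell$ with the claimed lower bound.

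First, a structural point. Cutting $L\setminus\mathring K$ only along the arcs of $M$ need not yield disks at all (an annular component containing no arc of $M$ remains an annulus). The paper instead decomposes $bK$ into subarcs $\alpha_j$ on each of which some coordinate $\pi_a(F)$ already exceeds $\lambda$ (property (a4)), and then introduces auxiliary arcs $\gamma_j$ (extending the $\alpha_j$-endpoints to $bL$, and containing the arcs of $M$ among them) to cut the band into disks $D_j$. The coordinate index is thus not chosen freely but is \emph{dictated} by the boundary data.

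Second, and more seriously, \cite[Lemma 3.3]{AlarconCastro2018} produces $\Sgot$-immersions on \emph{arcs}. ``Iterating along a finite collection of arcs covering $D_\ell\setminus U$'' gives at best a map on an admissible skeleton; to obtain $G_\ell$ on the disk you must then apply a Mergelyan-type result, which only approximates on the skeleton and gives no pointwise lower bound on $|\pi_{j(\ell)}|$ off it. Since $G_\ell$ is also required to agree with $F_0$ on $bD_\ell\cap U$, where $\pi_{j(\ell)}(F_0)$ may well be small, nothing prevents $|\pi_{j(\ell)}(G_\ell)|$ from dipping below $C_1$ in the interior.

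The paper's trick circumvents this. For $j\in I_m$, the coordinate $\pi_m(G)$ is already large on $\alpha_j\cup\gamma_{j-1}\cup\gamma_j$ and hence on a collar $\Upsilon_j\subset D_j$. One then \emph{fixes $\pi_m$ exactly} (this is precisely what the Oka hypothesis on $\Sgot\cap\{z_m=1\}$ and the local section of $\pi_m$ buy, via \cite[Proposition 5.2]{AlarconCastro2018}) and translates some \emph{other} coordinate by a large constant $\zeta$ on the separated compact $\overline{D_j\setminus\Upsilon_j}$. After applying Proposition~5.2, $\pi_m$ is preserved verbatim, so its lower bound on $\Upsilon_j$ survives; the remaining coordinates are uniformly approximated, so their largeness on $D_j\setminus\Upsilon_j$ survives. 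The union gives $\|G_m\|_\infty>\lambda$ on all of $D_j$. This two-region argument---fixing the coordinate that is \emph{already} large near the boundary while shifting another in the core---is the missing idea in your proposal.
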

\begin{proof}
We may suppose  without loss of generality that $F$ is a holomorphic map on $\Rcal$ by Theorem \ref{th:Carleman}. Note that $L\setminus K$ is a finite pairwise disjoint union of annuli. For simplicity of exposition, we assume that $A:=L\setminus \mathring K$ is connected and hence there is only one annulus to consider; for the general case it is enough to apply the following argument to each component separately. 

Let $\alpha$ denote the component of the boundary of $A$ contained in $bK$ and $\beta$ the other component which is contained in $bL$.
Set
\begin{align*}
\lambda&:=\frac12\min \{ \|F(q)\|_\infty : q\in bK\cup(M\cap(L\setminus K)) \},\\
\tau&:= \frac12\min \{ \|F(q)\|_\infty : q\in M\cap bL\},
\end{align*} 
and $N:=M\cap A$. Note that $\tau \geq \lambda$.
We may assume without loss of generality that \( \lambda, \tau > 0 \).
For if this were not the case we could modify \( K \), \( L \), and \( M \) slightly.
	
To prove the lemma we are going to construct an $\Sgot$-immersion $\wh F\colon \Rcal\to\c^n$ that verifies the following properties: 
	\begin{enumerate}[\rm (i)]
		\item $\| \wh F(p)- F(p)\|<\epsilon(p)$ for any $p\in N\cup K$.
		\smallskip
		\item $\wh F- F$ has a zero of multiplicity $k$ at each point of $\Lambda$.
		\smallskip
		\item $\|\wh F(p)\|_\infty > \lambda
,\quad p\in \mathring L\setminus K$.
		\smallskip
		\item $\|\wh F(p)\|_\infty >\tau
,\quad p\in bL.$
	\end{enumerate}
Then using Lemma \ref{lem:gluing} we will glue $\wh F$ to $F$ at the {\em trouble points} of $bL\cap M$ and get the desired $\Sgot$-immersion.
Suppose that $F=(F_1,\ldots,F_n)$ and $M\cap bK=\{p^1,\ldots,p^r\}$ for some $r\in\n$.
	
Let $\pi_a\colon \c^n\to\c$ denote the $a$-th coordinate projection. Since $F$ satisfies $\|F(p)\|_\infty>\lambda$ for $p\in bK$  there exist an integer $l\ge r$, disjoint subsets $I_1,\ldots,I_n$ of $\z_l=\{0,1,\ldots,l-1\}$ (which denotes the additive cyclic group of integers modulo $l$), and connected subarcs $\{\alpha_j: j\in \z_l \}$ of $bK$, satisfying the following properties: 
\begin{enumerate}[\rm ({a}1)]
	\item $\bigcup_{j\in\z_l} \alpha_j=\alpha$.
		\smallskip
	\item $\alpha_j$ and $\alpha_{j+1}$ have a common endpoint $p_j$, are otherwise disjoint, and the points $p^1,\ldots,p^r\in\{p_1,\ldots,p_l\}$.  In particular, \( \alpha_j \) connects \( p_{j-1} \) to \( p_{j} \).
		\smallskip
	\item $\bigcup_{a=1}^n I_a=\z_l$ and $I_a\cap I_b=\emptyset$ for all $a\neq b\in\{1,\ldots,n\}$.
		\smallskip
	\item If $j\in I_a$ then $|\pi_a(F(p))|>\lambda$ for all $p\in\alpha_j$, $a=1,\ldots,n$.
\end{enumerate}
Note that $I_a$ may be empty  for some $a\in\{1,\ldots,n\}$.
If for some \( j \in \Z_l\) 
there are multiple \( a \in \{ 1, \dots, n\}\) for which $|\pi_{a}(F(p))|>\lambda$ for all \( p \in \alpha_j \), then we simply choose one of these one \( I_a \) for \( j \) to belong to.

Next, for each $j\in\z_l$ consider a smooth Jordan arc $\gamma_j\subset A$ such that:
\begin{itemize}
	\item $\gamma_j$ connects $p_j\in\alpha$ to one point $q_j\in\beta$ and is otherwise disjoint from $bA$.
		\smallskip
	\item If $\gamma_j\cap\alpha=\{p^i\}$ for some $i=1,\ldots,r$ then $\gamma_j\subset N$. Hence $N\subset \bigcup_{j\in\z_l}\gamma_j$.
		\smallskip
	\item The arcs $\gamma_j$ are pairwise disjoint.
\end{itemize}
We may assume that if $\gamma_j\subset N$, then  there is some fixed  \( a \in \{1, \dots, n \}\) such that $|\pi_a(F(p))|>\lambda$ for \( p \in \gamma_j \) and $|\pi_a(F(q_j))|>\tau$. If this were not the case,  then we could take a finite exhaustion of \( L \) starting with \( K \) so that  each pair of consecutive sets in the sequence has this property. Then applying the following reasoning a finite number of times yields the desired result.

Let $S$ be the admissible set given by
\[
	S:=M\cup K\cup (\bigcup_{j\in\z_l}\gamma_j)
\]
Next, we extend $F$ to $S$ such that:
\begin{enumerate}[\rm ({b}1)]
	\item If $j\in I_a$ then $|\pi_a(F(p))|>\lambda$ for all $p\in\gamma_{j-1}\cup\gamma_j$, $a=1,\ldots,n$.
		\smallskip
	\item If $j\in I_a$ then $|\pi_a(F(q_{j-1}))|>\tau$ and $|\pi_a(F(q_j))|>\tau$, $a=1,\ldots,n$.
\end{enumerate}
Recall  we have assumed that if $\gamma_j\subset N$ then $F$  automatically satisfies these properties on \( \gamma_j\).

By  Remark \ref{fattening} and Lemma \ref{lem:gluing}, we may assume that \( F \) is defined on a slightly larger  set of the form \( K' \cup M \), where \( K' \) is a smoothly bounded compact domain.
Then Theorem \ref{th:Carleman} provides an $\Sgot$-immersion $G=(G_1,\ldots,G_n)\colon \Rcal\to\c^n$ with the following properties:
\begin{enumerate}[\rm ({c}1)]
	\item $\|G(p)-F(p)\|<\epsilon(p)$ for all $p\in S$. (Recall we have extended $F$ to $S$.)
		\smallskip
	\item $G-F$ has a zero of multiplicity $k(p)$ at each point $p\in\Lambda$.
		\smallskip
	\item If $j\in I_a$ then $|\pi_a(G(p))|>\lambda$ for all $p\in\gamma_{j-1}\cup\alpha_j\cup\gamma_j$, $a=1,\ldots,n$.
		\smallskip
	\item If $j\in I_a$ then $|\pi_a(G(q_{j-1}))|>\tau$ and $|\pi_a(G(q_{j}))|>\tau$, $a=1,\ldots,n$.
\end{enumerate}
For \( j \in \z_l \) let $\beta_j\subset \beta$ be an arc  connecting $q_{j-1}$ to $q_j$ and not intersecting \( \{ q_1, \dots, q_{l-1} \} \) other than at its endpoints. 
Notice that
\begin{equation}\label{eq:beta}
	\beta=\bigcup_{j\in\z_l}\beta_j.
\end{equation}
Let $D_j\subset A$ be the compact disc bounded by $\alpha_j$, $\beta_j$, $\gamma_j$ and $\gamma_{j-1}$. 
Clearly
\begin{equation}\label{eq:discA}
	A=\bigcup_{j\in\z_l} D_j.
\end{equation}
Set $G_0:=G|_L\colon L\to\c^n$. 
To prove the lemma we are going to construct a sequence of $\Sgot$-immersions $G_m\colon L\to\c^n$ for $m=1,\ldots,n$ that satisfies the following properties:
\begin{enumerate}[\rm ({d}1$_m$)]
	\item $G_m$ approximates $G_{m-1}$ well\footnote{More precisely, \(\displaystyle \| G_m(p) - G_{m-1}(p) \| <  \min_{p \in N \cup K} \frac{\epsilon(p) - \| G(p) - F(p) \|}{n} \) for every \( p \in L \setminus \cup_{j \in I_m}\mathring D_j \).}   on 
$L\setminus (\bigcup_{j\in I_m}\mathring D_j)$.
	\smallskip
	\item $G_m-G_{m-1}$ has a zero of multiplicity $k(p)$ at any point $p\in\Lambda$.
		\smallskip
	\item If $j\in \bigcup_{i=1}^m I_i$ then $||G_m(p)||_\infty>\lambda$ for all $p\in D_j$.
		\smallskip
	\item If $j\in \bigcup_{i=1}^m I_i$ then $||G_m(q)||_\infty>\tau$ for all $q\in \beta_j$.
		\smallskip
	\item If $j\in I_a$ then $|\pi_a(G_m(p))|>\lambda$ for all $p\in\gamma_{j-1}\cup\alpha_j\cup\gamma_j$, $a=1,\ldots,n$.
		\smallskip
	\item If $j\in I_a$ then $|\pi_a(G_m(q))|>\tau$ for $q\in\{q_{j-1},q_j \}$, $a=1,\ldots,n$.
\end{enumerate}

Suppose that we have already constructed such a sequence \( G_1, \dots, G_n \). Then by {\rm (c1)} and {\rm (d1$_{1}$)}--{\rm (d1$_n$)} we may apply Theorem \ref{th:Carleman} 
to \( G_n \) to obtain a map \( \widehat F\colon \Rcal\to\c^n \) that satisfies {\rm (i)} and such that
\begin{equation}\label{eq:jetinter}
	\text{$\wh F-G_n$ has a zero of multiplicity $k$ at each point of $\Lambda$.}
\end{equation}
Condition {\rm (ii)} is then guaranteed by {\rm (d2$_1$)}--{\rm (d2$_n$)}, {\rm (c2)}, and \eqref{eq:jetinter}. 
Condition {\rm (iii)} follows from {\rm (d3$_n$)}, {\rm (a3)}, and \eqref{eq:discA}. Finally condition {\rm (iv)} is implied by {\rm (d4$_n$)}, {\rm (a3)}, and \eqref{eq:beta}. 

To finish the proof let us construct the sequence $\{G_1,\ldots,G_n \}$. We reason by induction. It is clear that $G_0=G|_L$ verifies properties {(\rm d5$_0$)} and {(\rm d6$_0$)} whereas {(\rm d1$_0$)}--{(\rm d4$_0$)} are vacuous. Now, assume that we have constructed $G_{m-1}\colon L\to\c^n$ with $1 \leq m\leq n$ and let us construct $G_{m}\colon L\to\c^n$.

By the continuity of $G_{m-1}$ and the properties {\rm (d5$_{m-1}$)} and {\rm (d6$_{m-1}$)} we have that for any $j\in I_m$ there exists a small compact tubular neighbourhood $\Upsilon_j\subset D_j$ relative to $D_j$ of $\gamma_{j-1}\cup\alpha_j\cup\gamma_j$ such that 
\begin{enumerate}[\rm A)]
	\item[\rm A)] $|\pi_m(G_{m-1}(p))|>\lambda$ for all $p\in \Upsilon_j$.
	\item[\rm B)] $|\pi_m(G_{m-1}(q))|>\tau$ for all $q\in \overline{\beta_j\cap\Upsilon_j}$.
\end{enumerate}
Define
\[
	S_m:=  L\setminus(\bigcup_{j\in I_m}\mathring \Upsilon_j).
\] 
Let $H\colon S_m\to\c^n$ be an $\Sgot$-immersion such that
\begin{enumerate}[\rm C)]
	\item[\rm C)] $H=G_{m-1}$ on $L\setminus(\bigcup_{j\in I_m}D_j)$.
	\item[\rm D)] $\pi_m(H)=\pi_m(G_{m-1})|_{S_m}$.
	\item[\rm E)] $||H(p)||_{\infty}>\tau\geq\lambda$ on $\bigcup_{j\in I_m} (\overline{D_j\setminus \Upsilon_j})$
\end{enumerate}
For instance, one could define $\pi_m(H)=\pi_m(G_{m-1})|_{S_m}$ and for $b\neq m$,
\[
 \pi_{b}(H(z))=\begin{cases}
\pi_{b}(G_{m-1}(z)), & z\in L\setminus(\bigcup_{j\in I_m}D_j).\\
\zeta, & z\in  \bigcup_{j\in I_m} D_j\setminus \Upsilon_j.
\end{cases}
\]
where $\zeta\in\c$ is sufficiently large so that {\rm E)} holds. Notice that $H$ is a holomorphic $\Sgot$-immersion since ${D_j\setminus \Upsilon_j}$ and $L\setminus(\bigcup_{j\in I_m}D_j)$ are separated.

Observe that  $\pi_m(H)$ clearly extends to all of $L$ since $\pi_m(H)=\pi_m(G_{m-1})$. Thus, we may apply \cite[Proposition 5.2]{AlarconCastro2018} to $H$ to obtain a holomorphic $\Sgot$-immersion $G_m\colon L\to\c^n$ such that
\begin{enumerate}[\rm A)]
	\item[\rm F)] $G_m$ approximates $H$ uniformly on $S_m$.
	\item[\rm G)] $G_m- H$ has a zero of multiplicity $k(p)$ at each point $p\in\Lambda$.
	\item[\rm H)] $\pi_m(G_m)=\pi_m(H)=\pi_m(G_{m-1})$ on $L$.
\end{enumerate}
Notice that \cite[Proposition 5.2]{AlarconCastro2018} provides jet-interpolation of fixed order. To ensure property {\rm G)} we apply the result to the integer $\max\{k(p):p\in\Lambda \}\in\z_+$.

We claim that $G_m$ satisfies {\rm (d1$_m$)}--{\rm (d6$_m$)}. Let us check it, {\rm (d1$_m$)} follows from {\rm C)} and {\rm F)}. Properties {\rm C)} and {\rm G)} imply condition {\rm (d2$_m$)}. {\rm (d3$_m$)} is a consequence of {\rm A)} with {\rm H)}, {\rm C)}, {\rm E)}, and {\rm F)}. Similarly, {\rm (d4$_m$)} is guaranteed by {\rm B)} with {\rm H)}, {\rm C)}, {\rm E)}, and {\rm F)}. Finally {\rm (d5$_m$)}  follows from {\rm C)}, {\rm F)}, and {\rm (d5$_{m-1}$}), while {\rm (d6$_m$)} follows from {\rm C)}, {\rm F)}, and {\rm (d6$_{m-1}$}).
\end{proof}

Once we have finished the previous lemma, we proceed with the proof of the main theorem of the section.

\begin{theorem}\label{th:properness2}
	The $\Sgot$-immersion $\wt F\colon \Rcal\to\c^n$, $n\ge 3$, constructed in Theorem \ref{th:Carleman} may be chosen to be proper provided that
	the restricted map $F|_{S \cup \Lambda}$ is proper and $\Sgot$ satisfies the hypotheses of Theorem \ref{th:properness}. 
\end{theorem}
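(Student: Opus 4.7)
The plan is to follow the inductive scheme of Theorem \ref{th:Carleman} and Theorem \ref{th:completeness2}, replacing the completeness argument based on controlling path lengths by an argument based on Lemma \ref{lem:propernesslemma} that forces the Euclidean norm $\|F_j(p)\|_\infty$ to grow on successive annuli of an exhaustion of $\Rcal$. Concretely, I would construct a sequence of $\Sgot$-immersions $F_j\colon S\cup \Omega\cup K_j\to\c^n$ (where $S=M\cup K$ is the Carleman admissible set and $\{K_j\}$ is a compact exhaustion of $\Rcal$) satisfying the approximation, interpolation, and injectivity conditions (I$_j$)--(IV$_j$) from the proof of Theorem \ref{th:Carleman}, together with two new properness-type bounds: (V$_j$) $\|F_j(p)\|_\infty > j-1$ for every $p\in K_j\setminus \mathring K_{j-1}$, and (W$_j$) $\|F_j(p)\|_\infty > 2j$ for every $p\in bK_j$. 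Condition (V$_j$) is what produces properness of $\wt F=\lim_j F_j$: the uniform estimates (I$_j$) ensure $\|\wt F(p)\|_\infty$ stays close to $\|F_j(p)\|_\infty$ on $K_j$, so any divergent sequence $p_n\in\Rcal$ lies in some $K_{k(n)}\setminus \mathring K_{k(n)-1}$ with $k(n)\to\infty$, whence $\|\wt F(p_n)\|_\infty\to\infty$. The strictly stronger bound (W$_j$) on the outer boundary is what drives the induction.

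For the setup I would fix an $\Oscr(\Rcal)$-convex compact exhaustion $\{K_j\}$ of $\Rcal$ as in Theorem \ref{th:Carleman}, arranged so that $K_{j-1}$ is a strong deformation retract of $K_j$ and $M\cap (K_j\setminus \mathring K_{j-1})$ consists of finitely many pairwise disjoint Jordan arcs connecting $bK_{j-1}$ to $bK_j$ transversely, which are precisely the hypotheses needed for Lemma \ref{lem:propernesslemma} at each step; topology changes between consecutive exhaustion levels can be absorbed into intermediate sub-stages handled by Theorem \ref{th:Carleman} alone. Using the hypothesis that $F|_{S\cup\Lambda}$ is proper, I would further arrange that $\|F(p)\|_\infty > 4j+2$ on $(S\cup\Lambda)\cap (\Rcal\setminus \mathring K_{j-1})$, providing the raw growth on $S\cup\Lambda$ that the lemma will propagate. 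The inductive step is then as follows: extend $F_{j-1}$ to all of $\Rcal$ by Theorem \ref{th:Carleman} (preserving injectivity when $F|_\Lambda$ is injective), then apply Lemma \ref{lem:propernesslemma} to the pair $K_{j-1}\Subset K_j$ with input $F_{j-1}$ restricted to $M\cup K_{j-1}$, interpolation data on $\Lambda\cap K_{j-1}$ of the appropriate orders, and a sufficiently small error function; finally define $F_j$ as the lemma's output on $M\cup K_j$, equal to $F_{j-1}$ on the parts of $S\cup\Omega$ outside $K_j$ (the sets are separated in the required sense). Conclusion (c) of the lemma, combined with (W$_{j-1}$) on $bK_{j-1}$ and with $\|F_{j-1}\|_\infty\approx \|F\|_\infty > 2(j-1)+1$ on $M\cap (K_j\setminus K_{j-1})$ (via (II$_{j-1}$)), yields (V$_j$); conclusion (d) combined with $\|F_{j-1}\|_\infty > 4j+1$ on $M\cap bK_j$ yields (W$_j$).

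The main obstacle is the quantitative bookkeeping of the growth conditions: because the factor $\tfrac12$ appearing in conclusions (c) and (d) of Lemma \ref{lem:propernesslemma} halves the input bound and thus prevents a single growth condition from being iterated across annuli, one must maintain both (V$_j$) and the strictly stronger (W$_j$) simultaneously, calibrating the boundary growth at each step to be at least twice the annular growth needed at the next step. Since the growth rate of $\|F\|_\infty$ along $M$ is at our disposal via the choice of exhaustion and is furnished by the properness hypothesis on $F|_{S\cup\Lambda}$, the induction closes. Injectivity of the limit, when $F|_\Lambda$ is injective, is enforced by the Theorem \ref{th:Carleman} extension step that precedes each application of the lemma, and assembling the sequence produces the desired proper $\Sgot$-immersion $\wt F=\lim_j F_j$.
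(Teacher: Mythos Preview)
Your approach is essentially the paper's: iterate Lemma \ref{lem:propernesslemma} across an exhaustion to force $\|F_j\|_\infty$ to grow on successive annuli while maintaining (I$_j$)--(IV$_j$). The paper uses adaptive targets $\lambda_j=\tfrac12\min\{\|F_{j-1}\|_\infty: q\in bK_{j-1}\cup((\Lambda\cup M\cup K)\cap(K_j\setminus\mathring K_{j-1}))\}$ and $\tau_j=\tfrac12\min\{\|F_{j-1}\|_\infty: q\in M\cap bK_j\}$ in place of your fixed $j-1$ and $2j$, but this is cosmetic: both choices diverge by properness of $F|_{S\cup\Lambda}$.

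There is, however, one point where your sketch is too brief and the paper does something substantive. You apply Lemma \ref{lem:propernesslemma} directly to the pair $K_{j-1}\Subset K_j$, but this overlooks three complications in the annulus $K_j\setminus\mathring K_{j-1}$: (i) $K_{j-1}$ need not be a strong deformation retract of $K_j$; (ii) there may be components of the original $K$ (the two-dimensional part of $S=M\cup K$) sitting there, on which the lemma gives no approximation, so (II$_j$) could fail; and (iii) there may be points of $\Lambda$ with their neighbourhoods $\Omega_p$ there, at which the lemma gives no interpolation, so (III$_j$) could fail. Your remark that topology changes can be ``absorbed into intermediate sub-stages handled by Theorem \ref{th:Carleman} alone'' does not address (ii)--(iii), and even for (i) you would still need to explain why (V$_j$) and (W$_j$) survive a Theorem-\ref{th:Carleman}-only stage.

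The paper's device handles all three at once: before invoking the lemma one attaches finitely many arcs to $K_{j-1}\cup((\Omega\cup K)\cap K_j)$ to form a connected $\Oscr(\Rcal)$-convex admissible set that is a strong deformation retract of $K_j$, extends $F_{j-1}$ along the new arcs with $\|F_{j-1}\|_\infty>\lambda_j$ there, globalises via Theorem \ref{th:Carleman}, and then applies Lemma \ref{lem:propernesslemma} to the pair $\wt S\Subset K_j$, where $\wt S$ is a thin tubular neighbourhood of this enlarged set. Since $\wt S\supset K_{j-1}\cup((\Omega\cup K)\cap K_j)\cup\Lambda\cap K_j$, the lemma's approximation and interpolation now cover everything needed, and the deformation-retract hypothesis is built in. Your scheme becomes correct once you insert this intermediate set.
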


\begin{proof}
Consider a compact exhaustion $\{K_j\}_{j\in\n}$ of $\Rcal$ by \( \Oscr({\Rcal}) \)-convex subsets
as in the proof of Theorem \ref{th:completeness2}.
Since $F|_S$ is proper we assume that $F$ is nonzero on $bK_j\cap M$ and \( \epsilon(p) < 1 \) for all \( p \in M \cup K \).

Set for any $j\in\n$.
 \begin{equation}\label{eq:Sj}
 S_j := bK_{j-1} \cup \big( (\Lambda \cup M \cup K ) \cap (K_j \setminus \mathring K_{j-1})\big).
 \end{equation}
Consider \( F_0:=F\colon M \cup K \cup \Omega \to \mathbb{C}^n \) and $K_0=\emptyset$.  To prove the theorem we will construct a sequence of \( \mathfrak{S} \)-immersions \( F_j\colon M \cup K \cup \Omega\cup K_j \to \mathbb{C}^n \) with the following properties for any $j\in\n$:
\begin{enumerate}[\rm (I$_j$)]	
	\item $\| F_{j}(p)- F_{j-1}(p)\|<1/2^j$ for any $p\in K_{j-1}$.
	\smallskip
	\item  $\| F_{j}(p)- F_{j-1}(p)\|<{\epsilon(p)}/{2^j}$ for any $p\in M\cup K$.
	\smallskip
	\item $F_{j}- F_{j-1}$ has a zero of multiplicity $k$ at each point of $\Lambda$.
	\smallskip
	\item If $F|_\Lambda$ is injective, then $F_j|_{K_j}$ is also injective.
	\smallskip
	\item \( \| F_j(p) \|_\infty > \lambda_j:=\frac{1}{2} \min \{ \| F_{j-1}(q) \|_\infty : q \in S_j \} \) for \( p \in \mathring K_{j} \setminus K_{j-1} \).
	\smallskip
	\item \( \| F_j(p) \|_{\infty} >\tau_j:= \frac{1}{2} \min \{ \| F_{j-1}(q) \|_\infty : q \in M \cap bK_{j} \} \) for \( p \in bK_j \).
\end{enumerate}
Assume for a moment that we have constructed such a sequence. As in the proof of Theorem  \ref{th:Carleman} properties {\rm (I$_{j}$)}--{\rm (IV$_{j}$)} for \( j \in \mathbb{N} \) ensure that there exists a limit map 
\begin{align*}
\widetilde F \colon \mathcal{R} \to \mathbb{C}^n,  \quad \widetilde F = \lim\limits_{j \rightarrow \infty} F_j
\end{align*}
that verifies conclusion of Theorem \ref{th:Carleman}.
On the other hand, it is clear that $\lambda_j$ and $\tau_j$ go to infinity when $j\to\infty$ since $F|_{M\cap K}$ is proper. Therefore, the image by $\wt F$ of a divergent path is also a divergent path. This implies that $\wt F$ is a proper map and would conclude Theorem \ref{th:properness2}.

In order to finish the proof of the theorem, we shall recursively construct the  sequence  \( \{F_j\}_{j \in \N} \).
The starting map is \( F_0 := F \).
Now, assume we are given an \( \mathfrak{S} \)-immersion \( F_{j-1} \colon M \cup K \cup \Omega  \cup K_{j-1}\to \C^n \) for any \( j \ge 1  \) and
let us construct a map \( F_{j} \) satisfying ({\rm I$_{j}$})--({\rm VI$_{j}$}).
First, by shrinking the sets $\Omega_p$ if necessary, we may assume that
\begin{equation}\label{eq:valueonOmega}
	||F_{j-1}(q)||_\infty >\lambda_j, \quad \text{for any $q\in \Omega \cap (K_{j} \setminus \mathring K_{j-1})$}.
\end{equation}
Let \( S \subset K_j\) be a connected $\Oscr(\Rcal)$-convex compact admissible subset obtained by attaching finitely many arcs to 
\[
K_{j-1} \cup \left( (\Omega  \cup K) \cap K_j\right)
\]
so that \( S \) is a  strong deformation retract of \( K_j \) and such that 
\begin{equation}\label{eq:goodintersections2}
\begin{array}{c}
\text{		$M\cap (K_j\setminus \mathring{S})$ consists of finitely many pairwise disjoint Jordan arcs}\\
\text{connecting $bS$ to $bK_j$ and meeting each boundary transversely.}
\end{array}
\end{equation}
Observe that some of the attached arcs
describe the topology of \( K_{j} \setminus K_{j-1} \), if nontrivial, whilst the others connect \( K_{j-1} \) to the connected components of \( K \) inside \(K_{j}\setminus \mathring K_{j-1} \), to the subsets \( \Omega_p \) for $p\in\Lambda \cap (K_j \setminus \mathring K_{j-1})$, and to the arcs of $M$ that do not intersect $K_{j-1}$. 
Note also that we allow the connecting arcs to be contained in  \( M \). See Figure \ref{fig:balloons}.

We may extend \( F_{j-1} \) to an  \( \Sgot \)-immersion on all of 
\( M \cup K \cup \Omega  \cup S \) such that
\begin{equation}\label{eq:valuecurves}
	||F_{j-1}(q)||_\infty>\lambda_j \quad \text{for any $q\in \big(S\cup (M\cap K_j)\big)\setminus \mathring K_{j-1}$,}
\end{equation}
see \eqref{eq:valueonOmega} and the definition of the number $\lambda_j$ in {\rm (V$_j$)}.

Next, we apply Theorem \ref{th:Carleman} (see Remark \ref{rem:avoidtopology}) to the $\Sgot$-immersion
	$F_{j-1}\colon  M \cup K \cup \Omega  \cup S \to\c^n$ 
and a sufficiently small positive continuous function $\epsilon'\colon M \cup K  \cup S \to\r_+$  
to obtain an $\Sgot$-immersion $G\colon \Rcal\to\c^n$ that, by \eqref{eq:valuecurves}, verifies 
\begin{enumerate}[\rm (a)]
	\item \( \| G(p) - F_{j-1}(p)  \|< \epsilon'(p) \) for any \( p \in M \cup K  \cup S  \).
	\item \( G - F_{j-1} \) has a zero of multiplicity \( k \) at each point of \( \Lambda \).
\end{enumerate}
From property {\rm (a)} we have that 
\begin{equation} \label{eq:bKjcondition}
		||G(q)||_\infty>\tau_j \quad \text{for any $q\in bK_j\cap M$}
\end{equation}
and that there exists a small smoothly bounded tubular neighbourhood \( \wt S  \subset K_{j}\) of \( S \)  such that 
\begin{equation} \label{eq:Ktildecondition}
		||G(q)||_\infty>\lambda_j \quad \text{for any \(q\in  (\wt S \setminus \mathring K_{j-1}) \cup \left(M \cap (K_j \setminus \wt S)\right) \)}
\end{equation}
See Figure \ref{fig:balloons}.
\begin{figure}[h]
	\includegraphics[width=11.5cm]{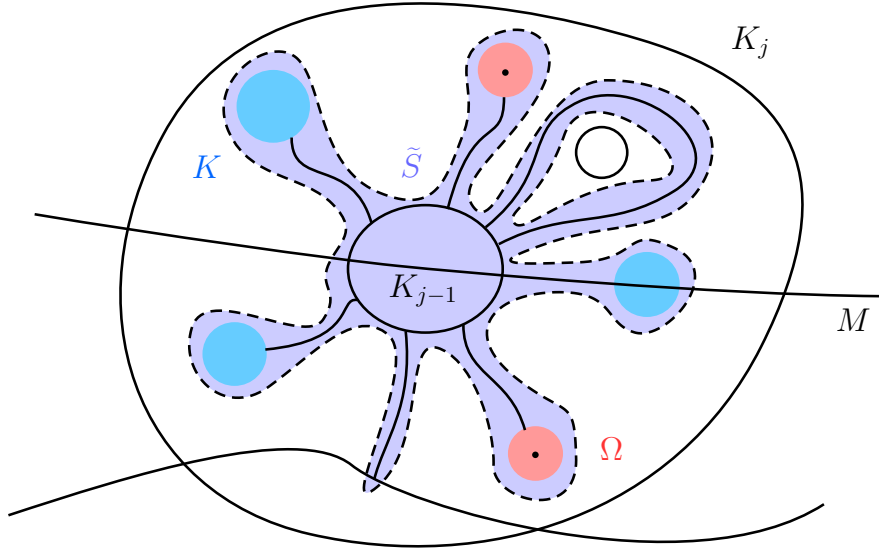}
	\caption{\small{The set $\wt S$ in the case that $K$ has $3$ connected components, $\Lambda$ has $2$ points in $K_j\setminus K_{j-1}$, $M$ has $2$ connected component, one intersects $K_{j-1}$ and the other does not, and $1$ curve is needed for the topology.}}
	\label{fig:balloons}
\end{figure}

Thus, we have that since $S$ is a strong deformation retract of $K_j$ and $\wt S$ is a  tubular neighbourhood of $S$, then  $\wt S$ is a strong deformation retract of $K_j$. Furthermore, by \eqref{eq:goodintersections2}, we may arrange that
$M\cap (K_j\setminus {\wt S}^{{}^\circ})$  also consists of finitely many pairwise disjoint Jordan arcs connecting $b\wt S$ to $bK_j$ and meeting each boundary transversely.
Then we apply Lemma \ref{lem:propernesslemma} to the data
 \[
   \wt S\Subset K_j,\quad \Lambda\cap K_j\subset {\wt S},\quad M, \quad G\colon M\cup \wt S\to\c^n, \quad\text{and}\quad \wt \epsilon\colon M\cup \wt S\to\r_+,
\]
where $\wt\epsilon$ is a positive continuous function such that
\[
	\wt\epsilon(p)<\delta(p)=
	\begin{cases}
	\frac{1}{2^j} & p\in \wt S\setminus (K\cup M),\\
	\min \{ \frac{1}{2^j},\frac{\epsilon(p)}{2^j} \} & p\in (K\cup M)\cap \wt S,\\
	\frac{\epsilon(p)}{2^j} & p\in (K\cup M)\setminus \wt S,
	\end{cases}
\]
to obtain an $\Sgot$-immersion $F_j\colon M\cup K_j\to\c^n$  such that
\begin{enumerate}[\rm (a)]
		\item[\rm (c)] $\| F_j(p)- G(p)\|<\wt \epsilon(p)$ for any $p\in M\cup \wt S$.
		\smallskip
		\item[\rm (d)] $ F_j- G$ has a zero {of multiplicity $k$} at each point of $\Lambda\cap K_j$.
		\smallskip
		\item[\rm (e)] $\| F_j(p)\|_\infty > \frac12\min \{ \|G(q)\|_\infty : q\in b \wt S \cup \big(M \cap (K_j \setminus \wt S)\big)  \},\quad p\in \mathring K_j\setminus \wt S$.
		\smallskip
		\item[\rm (f)] $\| F_j(p)\|_\infty > \frac12\min \{ \|G(q)\|_\infty : q\in M\cap bK_j\},\quad p\in bK_j.$
\end{enumerate}
Note that we may extend \( F_j \) to \( M\cup K\cup \Omega\cup K_j \) by defining  
\begin{align} F_{j}|_{(K\cup\Omega) \setminus K_j}=F_{j-1}  \label{extending}
\end{align}
 since $(K\cup\Omega) \setminus K_j$ is separated from $K_j$.
Moreover, by Theorem \ref{th:Carleman} we may assume also that
\begin{enumerate}[\rm (a)]
\item[\rm (g)] $F_j$ is injective if $G|_\Lambda$ is injective, which is by {\rm (b)}.
\end{enumerate}

We claim that the map $F_j\colon M\cup K\cup \Omega\cup K_j \to\c^n$  verifies the desired conditions {\rm (I$_j$)}--{\rm (VI$_j$)}. Clearly {\rm (I$_j$)}  and {\rm (II$_j$)} follow from {\rm (a)}, {\rm (c)}, and \eqref{extending} since \(  K_{j-1} \subset S \subset \wt S \) and provided that the approximation in {\rm (a)} is good enough.
Condition {\rm (III$_j$)} follows from {\rm (b)}, {\rm (d)}, and \eqref{extending}. Property {\rm (g)} equals condition {\rm (IV$_j$)}
Finally, condition {\rm (V$_j$)} is implied by {\rm (e)}, whereas 
condition   {\rm (VI$_j$)} follows from  {\rm (f)}, providing the approximation in {\rm (a)} is close enough.
\end{proof}

\section{Results for conformal minimal immersions}\label{sec:minimal}

As announced we are going to show how the ideas of Sections \ref{sec:Carleman}, \ref{sec:completeness}, and \ref{sec:properness} may be adapted in order to prove Carleman type approximation results for conformal minimal immersions. 

Recall that the punctured null quadric $\Agot_*=\Agot\setminus\{0\}\subset \c^n$, see \eqref{eq:pnullquadric}, for any $n\ge 3$ directs null curves and is an Oka manifold verifying the additional hypothesis of Theorem \ref{th:easyCarleman}. See \cite[\textsection 2.3]{AlarconCastro2018} for details. Therefore, the results in the previous sections apply and provide complete null curves (and hence conformal minimal immersions with vanishing flux map) that may be chosen proper when the initial one is. However, to get complete and proper conformal minimal immersion with any prescribed flux map some minor modifications have to be made in the proofs. We shall leave the obvious details of the proofs to the interested reader.

First, we are going to say that $X\colon S\to\r^n$, $n\ge 3$, is a {conformal minimal immersion} defined over a Carleman admissible subset $S=M\cup K$ if $X$ is a continuous map such that $X|_K$ is a conformal minimal immersion in the standard case. Notice that, as was the case for $\Sgot$-immersions, we do not require $X$ to be smooth on $M\setminus K$ since we are dealing with $\Cscr^0$ approximation; we could always include a preparatory first step where we approximate  by a smooth map satisfying the extra conditions.

Accordingly, we show the following result.
\begin{theorem}[Carleman theorem with jet interpolation for conformal minimal immersions]\label{th:Carlemanminimal}
	Let $\Rcal$, $S=M\cup K$, $\Lambda$, $\Omega$, $k\colon \Lambda\to\z_+$, and $\epsilon\colon M\cup K\to\r_+$, be as in Theorem \ref{th:Carleman}.
	Given a group morphism $\pgot\colon\Hcal_1(\Rcal;\z)\to\r^n$ and a conformal minimal immersion $X\colon M\cup K\cup\Omega\to\r^n$ such that
	${\mathrm{Flux}}_X=\pgot$ on $K$,
	there exists a complete conformal minimal immersion $\wt X\colon \Rcal\to\r^n$ that verifies the following properties:
	\begin{enumerate}[\rm (i)]
		\item $\| \wt X(p)- X(p)\|<\epsilon(p)$ for any $p\in M\cup K$.
		\item $\wt X$ and $X$ have a contact of order $k(p)$ at any $p\in\Lambda$.
		\item ${\mathrm{Flux}}_{\wt X}=\pgot$ on $\Rcal$.
		\item If $n\ge 5$ and the map $X|_\Lambda$ is injective, then $\wt X$ may be chosen to be injective.
		\item $\wt X$ may be chosen to be proper provided that $X|_{K\cup M\cup \Lambda}$ is proper.
	\end{enumerate}
\end{theorem}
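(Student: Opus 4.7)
The plan is to adapt the combined inductive schemes from Sections \ref{sec:Carleman}, \ref{sec:completeness}, and \ref{sec:properness} to the setting of conformal minimal immersions with a prescribed flux map. Recall that the punctured null quadric $\Agot_* = \Agot \setminus \{0\}$ is an Oka manifold satisfying the additional hypotheses of Theorems \ref{th:completeness} and \ref{th:properness}, so the three preceding theorems already give the result for null curves, which are precisely the conformal minimal immersions of vanishing flux. The only genuinely new point is therefore the flux bookkeeping: at each inductive step one must replace the Mergelyan result \cite[Theorem 1.3]{AlarconCastro2018} used for $\Sgot$-immersions by the corresponding Mergelyan theorem with jet interpolation for conformal minimal immersions with prescribed flux, which is available in \cite{AlarconCastro2018} (and implicitly in \cite{AlarconForstneric2014IM}).

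Concretely, the induction produces a sequence of conformal minimal immersions $X_j\colon M\cup K\cup\Omega\cup K_j\to\r^n$ satisfying the analogues of properties {\rm (I$_j$)}--{\rm (VI$_j$)} in Theorems \ref{th:Carleman}, \ref{th:completeness2}, \ref{th:properness2}, together with the additional condition $\Flux_{X_j}=\pgot|_{\Hcal_1(K_j;\z)}$. The gluing Lemma \ref{lem:gluing} is replaced by its analogue for conformal minimal immersions, built in exactly the same way from \cite[Lemma 3.3]{AlarconCastro2018} (which produces smooth arcs with prescribed endpoints, endpoint derivatives, and values in the null quadric). Because each gluing is performed inside a simply connected neighbourhood of a single trouble point on $M\cap bL$, it does not modify any generator of $\Hcal_1(K_j;\z)$ and so preserves the flux automatically; the Mergelyan step in turn prescribes the flux on the new homology generators of $\Hcal_1(K_j;\z)$ that are not already present in $\Hcal_1(K_{j-1};\z)$, which is possible precisely because $\pgot$ is defined on all of $\Hcal_1(\Rcal;\z)$.

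Completeness is enforced as in the proof of Theorem \ref{th:completeness2}: inside each annular component of $K_j\setminus\mathring L$ we ensure an intrinsic contribution of at least $1$ to ${\dist}_{X_j}(p_0,bK_j)$, either by first lengthening $X_{j-1}\circ\gamma$ along arcs $\gamma\subset M$ connecting $bL$ to $bK_j$, or by introducing an auxiliary family of pairwise disjoint simply connected discs $D\subset A\setminus M$ and altering $n-1$ coordinates of the map on $D$ by a large additive constant while keeping the first coordinate fixed; this preserves membership in $\Agot_*$ on $D$ because $\Agot$ satisfies the hypotheses of Theorem \ref{th:completeness}. Properness under the hypothesis that $X|_{M\cup K\cup\Lambda}$ is proper follows by the same strategy as Theorem \ref{th:properness2}, replacing the key Lemma \ref{lem:propernesslemma} by its conformal minimal variant, again using that $\Agot$ satisfies the hypotheses of Theorem \ref{th:properness}; the flux is simply carried along as an additional piece of data at every step.

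For injectivity in dimension $n\ge 5$ assuming $X|_\Lambda$ is injective, we invoke the standard general-position argument for conformal minimal immersions of a Riemann surface into $\r^n$: since $2\cdot\dim_\r \Rcal = 4 < n$, a generic small perturbation through conformal minimal immersions with the same flux is injective, and this is precisely the injectivity clause already present in the Mergelyan theorem of \cite{AlarconCastro2018} (see also \cite{AlarconForstnericLopez2016MZ}). The main obstacle is the simultaneous bookkeeping of jet interpolation, flux control, completeness and properness in a single induction; once the correct Mergelyan tool with jet interpolation and prescribed flux is in hand, each of these four properties is compatible with the others because the flux is a homological invariant insensitive to the local arc-by-arc gluings, the completeness construction only modifies the map on simply connected discs $D$ disjoint from $M$ and from the interpolation set, and the properness construction only increases the sup-norm on annular collars without creating new homology.
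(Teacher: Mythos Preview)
Your proposal is correct and follows essentially the same route as the paper: run the inductive schemes of Sections \ref{sec:Carleman}, \ref{sec:completeness}, and \ref{sec:properness} with the Mergelyan theorem for $\Sgot$-immersions (\cite[Theorem 1.3]{AlarconCastro2018}) replaced by its conformal minimal analogue with prescribed flux (\cite[Theorem 1.2]{AlarconCastro2018}), and the gluing Lemma \ref{lem:gluing} replaced by its conformal minimal version, noting that the local arc-by-arc gluings leave the flux untouched. Your flux bookkeeping is in fact somewhat more explicit than the paper's own sketch, which simply states the needed substitutions and leaves the details to the reader.
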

\noindent Notice that Theorem \ref{th:simpleminimal} follows trivially from Theorem \ref{th:Carlemanminimal}. In addition, we point out that condition on $X|_\Lambda$ and $X|_{K\cup M\cup \Lambda}$ in assertions {\rm (iv)} and {\rm (v)} are necessary.
Indeed, in general, one can not choose the conformal minimal immersion $\wt X$ in {\rm (iv)} to be an embedding, i.e. a homeomorphism onto $\wt X(\Rcal)$ endowed with the subspace topology inherited from $\r^n$, since there are injective maps $\Lambda\to\r^n$ that do not extend to a topological embedding. However, since proper injective immersions $\Rcal\to\r^n$ are embeddings, we can choose $\wt X$ in Theorem \ref{th:Carlemanminimal} to be a proper conformal minimal embedding provided that $n\ge 5$ and $X$ verifies both assumptions of {\rm (iv)} and {\rm (v)}.

Next, let us show how the ideas that appear in Section \ref{sec:Carleman}, \ref{sec:completeness}, and \ref{sec:properness} may be used to prove analogues of Theorem \ref{th:Carleman}, Theorem \ref{th:completeness2}, and Theorem \ref{th:properness2} for minimal surfaces.
Our proofs rely on applying Mergelyan type results to approximate in a compact subset and then using Lemma \ref{lem:gluing} to glue with the initial immersion we are given. Therefore, what we need in order to prove Theorem \ref{th:Carlemanminimal} are Mergelyan type results with interpolation for conformal minimal immersions, they can be found in \cite{AlarconCastro2018}, and an analogue to Lemma \ref{lem:gluing} for conformal minimal immersion. 

First, notice that Lemma \ref{lem:gluing} holds for conformal minimal immersions as follows.
\begin{lemma}\label{lem:gluingminimal}
	If we are given $M$, $K\subset L\subset \Rcal$, and $\epsilon\colon M\cup K\to\r_+$ as in Lemma \ref{lem:gluing}, and also conformal minimal immersions $X\colon M\cup K\to\r^n$ and $\wh X\colon\Rcal\to\r^n$ such that
	\[
	||X(p)-\wh X(p) ||<\epsilon(p), \quad p\in (M\cup K)\cap L,
	\]
	there exists a conformal minimal immersion $\wh X\colon M\cup K\cup L\to\r^n$ such that $\wt X|_L=\wh F|_L$ and
	\[
	||X(p)-\wt X(p) ||<\epsilon(p), \quad p\in M\cup K.
	\]
	Obviously, it is verified that $\mathrm{Flux}_{\wt X}=\mathrm{Flux}_{\wh X}$ on $L$.
\end{lemma}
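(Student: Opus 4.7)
\smallskip

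\noindent\textbf{Proof proposal.} The plan is to imitate the proof of Lemma \ref{lem:gluing}, replacing the arc-gluing tool from \cite[Lemma 3.3]{AlarconCastro2018} for $\Sgot$-immersions by its companion version for conformal minimal immersions (also provided in \cite{AlarconCastro2018}). The flux conclusion is automatic: since every modification we are about to make will take place outside $L$, the resulting $\wt X$ will coincide with $\wh X$ on $L$, so $\mathrm{Flux}_{\wt X}(\gamma)=\mathrm{Flux}_{\wh X}(\gamma)$ for every cycle $\gamma\subset L$.

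First I would set, in the naive way,
\[
\wt X(p) = \begin{cases} \wh X(p), & p\in L, \\ X(p), & p\in (M\cup K)\setminus L. \end{cases}
\]
Since $K\cap bL=\emptyset$ and $M$ meets $bL$ transversely, this map is continuous and a conformal minimal immersion off a finite set of trouble points, all lying in $M\cap bL$. If there are no trouble points we are done; otherwise enumerate them $p_1,\ldots,p_r$ and treat them one at a time.

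For a fixed trouble point $p=p_i$, choose a small coordinate neighborhood $U\subset\Rcal$ of $p$ disjoint from $K$ and from the other trouble points, and pick an embedded arc $\gamma\colon(-1,2)\to U\cap M$ with $\gamma(1)=p$, $\gamma((-1,1])\subset\Rcal\setminus L$ and $\gamma((1,2))\subset L\setminus K$. Exactly as in the proof of Lemma \ref{lem:gluing}, the approximation hypothesis $\|X(p)-\wh X(p)\|<\epsilon(p)$ together with the continuity of $X$, $\wh X$ and $\epsilon$ allows me to choose $\delta>0$ and radii $\rho_1<\rho_2$ so that, after reparameterising to $\delta=1$, the arc $\gamma([0,1])$ has the properties
\[
\wh X(\gamma(0)),\, X(\gamma(1))\in N:=B(X(p),\rho_2),
\]
and $\epsilon(\gamma(t))>\rho_2$ throughout the arc.

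The key step is to apply the conformal-minimal-immersion analogue of \cite[Lemma 3.3]{AlarconCastro2018} to $\gamma([0,1])$ to produce a conformal minimal immersion $G\colon\gamma([0,1])\to\r^n$ which takes the value $X(\gamma(0))$ at $t=0$ and $\wh X(\gamma(1))$ at $t=1$, matches the real tangent vectors $(X\circ\gamma)'(0)$ and $(\wh X\circ\gamma)'(1)$ at the two endpoints, and whose image stays inside $N$. Replacing $\wt X$ on $\gamma([0,1])$ by $G$ removes the trouble point $p$ while preserving the bound $\|X-\wt X\|<\epsilon$ on $M\cup K$ and the identity $\wt X|_L=\wh X|_L$. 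Iterating $r$ times over $p_1,\dots,p_r$ yields the desired map.

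The one substantive point is the availability of the arc-gluing lemma in the conformal minimal setting with matched real tangents at both endpoints and with prescribed target ball; this is exactly the minimal-surface version packaged in \cite{AlarconCastro2018}, after which the proof is a direct transcription of the argument for Lemma \ref{lem:gluing}. The flux statement then follows for free, as remarked at the outset.
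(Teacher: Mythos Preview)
Your proposal is correct and follows essentially the same approach as the paper. The paper's own argument is a single sentence: ``The same proof made in Lemma~\ref{lem:gluing} provides such a conformal minimal immersion since the punctured null quadric $\Agot_*$ directing minimal surfaces is an Oka manifold.'' You have simply written out that transcription in detail, invoking the conformal-minimal-immersion version of \cite[Lemma~3.3]{AlarconCastro2018} (equivalently, \cite[Lemma~3.3]{AlarconCastro2018} for $\Sgot=\Agot$) at the gluing step, and observing that the flux claim is automatic because $\wt X|_L=\wh X|_L$.

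One small slip: in your displayed inclusion you wrote $\wh X(\gamma(0)),\, X(\gamma(1))\in N$, but the roles are reversed---since $\gamma(0)\in\Rcal\setminus L$ and $\gamma(1)=p\in bL$, the endpoint values you must place inside $N$ are $X(\gamma(0))$ and $\wh X(\gamma(1))$, exactly as you state correctly in the subsequent ``key step'' paragraph.
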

Note that if $L$ is a $\Oscr(\Rcal)$-convex subset then it is enough for proving the theorem that $\wh X$ is defined on $L$, since we may approximate $\wh X$ by Mergelyan Theorem for conformal minimal immersion \cite[Theorem 1.2]{AlarconCastro2018}. The same proof made in Lemma \ref{lem:gluing} provides such a conformal minimal immersion since the punctured null quadric $\Agot_*$ directing minimal surfaces is an Oka manifold.

Using Lemma \ref{lem:gluingminimal} we  can now  give  the proof of Theorem \ref{th:Carlemanminimal}.

\subsection{Proof of Theorem \ref{th:Carlemanminimal}}

Let us show how Theorem \ref{th:Carlemanminimal} is proved. We reason by induction. Consider an exhaustion $\{K_j\}_{j\in\n}$ of $\Rcal$ as in the proof of Theorem \ref{th:Carleman}.
Next, we take a fixed point $p_0\in \mathring K_1$. We would construct a sequence of conformal minimal immersions $X_j\colon M \cup K \cup \Omega \cup K_j\to\r^n$ that verifies the following properties for any $j\in\n$:
\begin{enumerate}[\rm (I$_j$)]	
	\item $\| X_{j}(p)- X_{j-1}(p)\|<1/2^j$ for any $p\in K_{j-1}$.
	\smallskip
	\item  $\| X_{j}(p)- X_{j-1}(p)\|<{\epsilon(p)}/{2^j}$ for any $p\in M\cup K$.
	\smallskip
	\item $X_{j}$ and $X_{j-1}$ have a contact of order $k(p)$ at each point $p\in\Lambda$.
	\smallskip
	\item $\mathrm{Flux}_{X_j}=\pgot$ on $K_j$.
	\smallskip
	\item If $n\ge 5$ and $X|_\Lambda$ is injective, then $X_j$ may be chosen to be injective.
	\smallskip
	\item	${\dist}_{X_j}(b_0,bK_j)>j-1$.
\end{enumerate}
Notice that properties {\rm (I$_j$)}--{\rm (III$_j$)}, and {\rm (VI$_j$)} are similar to the ones in the directed immersions case. Property {\rm (V$_j$)} needs the additional condition $n\ge 5$.

To prove the existence of such a sequence we follow the proof of Theorem \ref{th:completeness2} with small modifications.
Therefore, throughout the reasoning we use the analogues for conformal minimal immersions of the results \cite[Theorem 1.3]{AlarconCastro2018} and Lemma \ref{lem:gluing} that allow us to control the flux map, that is, we use  \cite[Theorem 1.2]{AlarconCastro2018} and Lemma \ref{lem:gluingminimal}.

Observe that, similarly as happen for the directed immersion case, if $n\ge 5$ then \cite[Theorem 1.2]{AlarconCastro2018} provides conformal minimal immersions which are injective. Hence, since Lemma \ref{lem:gluingminimal} involved gluing on arcs then the conformal minimal immersion $X_j$ may be chosen to be injective and then condition {\rm V$_j$} holds.
Thus we prove the existence of the sequence and conclude taking limits when $j\to\infty$. The limit map $\wt X$ is clearly a conformal minimal immersion and satisfies conditions {\rm (i)}--{\rm (iv)} of the theorem.

Let us show how condition {\rm (v)} is deduced. Assume that $X|_{M\cup K\cup \Lambda}$ is proper.
We follow the ideas in the proof of Theorem \ref{th:properness2}, that is, 
we would construct a sequence of conformal minimal immersions \( F_j\colon M \cup K \cup \Omega\cup K_j \to \r^n \) that satisfies conditions {\rm (I$_j$)}--{\rm (V$_j$)} before and also
\begin{enumerate}[\rm (I$_j$)]	
	\item[\rm (VII$_j$)] \( \| X_j(p) \|_\infty > \lambda_j:=\frac{1}{2} \min \{ \| X_{j-1}(q) \|_\infty : q \in S_j \} \) for \( p \in \mathring K_{j} \setminus K_{j-1} \).
	\item[\rm (VIII$_j$)]	 \( \| X_j(p) \|_{\infty} >\tau_j:= \frac{1}{2} \min \{ \| X_{j-1}(q) \|_\infty : q \in M \cap bK_{j} \} \) for \( p \in bK_j \).
\end{enumerate}
Where $S_j$ is the subset defined in \eqref{eq:Sj}.

Once we have constructed the sequence, conditions {\rm (I$_j$)}--{\rm (V$_j$)} give us the existence of a conformal minimal immersion $\wt X:=\lim_{j \to \infty} X_j$ that interpolates at the points of $\Lambda$, has $\pgot$ as flux map, and is injective if $n\ge 5$. On the other hand, conditions {\rm (VII$_j$)} and {\rm (VIII$_j$)} ensures that the conformal minimal immersion $\wt X$ is proper.
Finally, to show the existence of the sequence we need to prove an analogue to Lemma \ref{lem:propernesslemma} for conformal minimal immersions and apply it recursively. Such a lemma is done following the proof of Lemma \ref{lem:propernesslemma} but using the Mergelyan type results proven in \cite[Section 7]{AlarconCastro2018}.

\section{Examples/Applications}
\label{applications}

In this section we give  some preliminary applications of our theorems.
Of particular interest are our solution to the approximate Plateau problem on divergent arcs in \( \R^n \) (Corollary \ref{cor:plat})  and the existence, for every open Riemann surface \( \Rcal \), of a conformal minimal immersion  \( X \colon \mathcal{R} \to \mathbb{R}^n \)  that passes by every conformal minimal immersion \( Y\colon \overline{D} \to \mathbb{R}^n \). (Corollary \ref{cor:oneminimalsurfacetorulethemall}).

Our first corollary is  the following strengthening of Carleman's theorem.

\begin{corollary}
	Given \( f_1, \dots, f_n, \epsilon \in \Cscr{(\R)} \) with \( \epsilon \) strictly positive, there exists a complete null curve \( F=(F_1,\dots,F_n)\colon \C \to \C^n \) such that 
	\begin{align*}
	|F_j(x)-f_j(x) | < \epsilon(x), \quad \text{ for every \( x \in \R \).}
	\end{align*}
\end{corollary}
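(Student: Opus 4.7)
The plan is to derive this directly from Theorem \ref{th:completeness} (or from Theorem \ref{th:Carlemanminimal} in its null-curve form) applied with the data $\Rcal = \C$, $S = \R \subset \C$, and $\Sgot = \Agot$ the null quadric from \eqref{eq:pnullquadric}. The target map is $f := (f_1,\dots,f_n)\colon \R \to \C^n$, which is continuous, and the positive error function is $\epsilon$.

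First I would verify that all the hypotheses are met. The real line $\R$ sits inside $\C$ as a smooth properly embedded image of $\R$, so by Example \ref{ex:Rembedded} it is $\Oscr(\C)$-convex and has bounded E-hulls; hence $S = \R$ is a Carleman admissible subset in the sense of Definition \ref{def:Cadmissible} (with $K = \emptyset$ and $M = \R$). The null quadric $\Agot \subset \C^n$ is an irreducible closed conical subvariety contained in no hyperplane, $\Agot_* = \Agot \setminus \{0\}$ is a connected Oka manifold, and the additional hypotheses in the hypothesis of Theorem \ref{th:completeness} (Oka property of $\Agot \cap \{z_1 = 1\}$ and existence of a nonvanishing local holomorphic section of $\pi_1\colon \Agot \to \C$ near the origin) are known to hold; see \cite[\textsection 2.3]{AlarconCastro2018} as cited in the paper.

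Next I would handle the regularity of the initial data. Strictly speaking, Theorem \ref{th:easyCarleman}/\ref{th:completeness} is stated for $\Sgot$-immersions, but as the remark immediately following Theorem \ref{th:easyCarleman} explains, by applying \cite[Lemma 3.3]{AlarconCastro2018} infinitely often one can first approximate the merely continuous map $f\colon \R \to \C^n$ by an $\Agot$-immersion $\wt f\colon \R \to \C^n$ in the strong (Whitney) topology, so that $\|\wt f(x) - f(x)\| < \epsilon(x)/2$ for all $x \in \R$. Applying Theorem \ref{th:completeness} to $\wt f$ with the error function $\epsilon/2$ then produces a complete null curve $F = (F_1,\dots,F_n)\colon \C \to \C^n$ (a null curve is exactly an $\Agot$-immersion) with $\|F(x) - \wt f(x)\| < \epsilon(x)/2$ on $\R$, whence by the triangle inequality $\|F(x) - f(x)\| < \epsilon(x)$ for every $x \in \R$.

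Finally, since the Euclidean norm on $\C^n$ dominates each coordinate modulus, we obtain
\[
|F_j(x) - f_j(x)| \leq \|F(x) - f(x)\| < \epsilon(x), \qquad x \in \R,\ j = 1,\dots,n,
\]
which is exactly the conclusion. There is really no obstacle here; the corollary is a direct specialization of the main completeness theorem to $\Rcal = \C$, $M = \R$, and $\Sgot = \Agot$. The only (minor) point worth spelling out is the passage from the continuous $f$ to an $\Agot$-immersion approximant, which is handled by the reference to \cite[Lemma 3.3]{AlarconCastro2018} as in Remark on page \pageref{rem:sectioncontainingdense}.
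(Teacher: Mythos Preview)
Your proposal is correct and is precisely the argument the paper has in mind: the corollary is stated without proof in the paper as an immediate specialization of Theorem~\ref{th:completeness} (equivalently Theorem~\ref{th:easyCarleman} plus completeness) to $\Rcal=\C$, $S=\R$, and $\Sgot=\Agot$, using Example~\ref{ex:Rembedded} for the Carleman admissibility of $\R$ and the remark after Theorem~\ref{th:easyCarleman} (via \cite[Lemma 3.3]{AlarconCastro2018}) to upgrade the merely continuous $f$ to an $\Agot$-immersion. Your final step passing from the Euclidean-norm estimate to the coordinatewise estimate is the only thing one needs to add explicitly.
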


We will refer to the image of a continuous map from an interval \( I\subset\r \) into \( \R^n \) as a continuous curve.
The following corollary is an immediate consequence of Theorem \ref{th:Carlemanminimal} and asserts that every continuous curve in $\r^n$, $n\ge 3$, is approximately contained in a minimal surface with any conformal structure.
\begin{corollary}
	Given a continuous curve $\gamma\colon I\to C\subset\r^n$, $n\ge 3$, a positive function $\epsilon\colon I\to\r_+$, and an open Riemann surface $\Rcal$, there exist a complete conformal minimal immersion $X\colon \Rcal\to\r^n$ and a continuous curve on $\Rcal$, $\alpha\colon I\to\Rcal$, such that
	\[
		||\gamma(t) - X (\alpha (t))||<\epsilon(t), \quad t\in I.
	\]
\end{corollary}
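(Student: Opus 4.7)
The plan is to reduce the statement to a direct application of Theorem \ref{th:simpleminimal}, by transplanting the continuous curve $\gamma$ onto a suitable properly embedded real line inside $\Rcal$ and then approximating.

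First, I would produce a smooth proper embedding $\beta\colon \R \to \Rcal$ with image $M := \beta(\R)$. Every open Riemann surface admits such an embedding: take two smooth divergent rays in $\Rcal$ whose intersection is compact and splice them near their starting points into a proper smooth embedding of $\R$ (if $\Rcal$ has only one end, choose the two rays so that they eventually leave every compact set through disjoint regions). By Example \ref{ex:Rembedded}, the resulting set $M$ is holomorphically convex and has bounded E-hulls, so it is a Carleman admissible subset satisfying the hypotheses of Theorem \ref{th:simpleminimal}.

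Next, I would construct a continuous injection $\alpha\colon I \to M$ whose image $\alpha(I)$ is closed in $M$. Up to homeomorphism $I$ is one of a point, $[0,1]$, $[0,\infty)$, or $\R$; in the first three cases one uses a proper inclusion $I \hookrightarrow \R$ followed by $\beta$, and in the case $I\cong \R$ one uses any homeomorphism $I\to\R$ followed by $\beta$. In every case $\alpha$ is a homeomorphism onto $\alpha(I)$ and $\alpha(I)$ is closed in $M$. The map $Y_0 := \gamma\circ \alpha^{-1}\colon \alpha(I)\to \R^n$ is then continuous on a closed subset of $M$, so the Tietze extension theorem (applied coordinatewise) supplies a continuous extension $Y\colon M \to \R^n$. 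Likewise, the positive continuous function $\epsilon\circ \alpha^{-1}$ on $\alpha(I)$ admits a positive continuous extension $\tilde\epsilon\colon M \to \R_+$, obtained, for instance, by interpolating with positive functions across each open interval of the complement $M\setminus \alpha(I)$.

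Finally, applying Theorem \ref{th:simpleminimal} to $M\subset\Rcal$, the map $Y$, and the positive function $\tilde\epsilon$ produces a complete conformal minimal immersion $X\colon \Rcal\to \R^n$ with $\|X(p)-Y(p)\|<\tilde\epsilon(p)$ for every $p\in M$. Specialising to $p=\alpha(t)$ gives the desired bound $\|X(\alpha(t))-\gamma(t)\|<\epsilon(t)$ for all $t\in I$. The only non-formal ingredient is the construction of the properly embedded real line in $\Rcal$; this is a standard topological fact for noncompact connected surfaces, and is essentially the only (mild) obstacle.
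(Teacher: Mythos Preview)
Your proof is correct and is essentially the approach the paper has in mind: the paper simply states that the corollary ``is an immediate consequence of Theorem \ref{th:Carlemanminimal}'' without further detail, and your argument supplies exactly the missing routine steps (choosing a properly embedded real line in $\Rcal$, transporting $\gamma$ and $\epsilon$ onto it via Tietze, and invoking the Carleman approximation theorem). The one point you flag---existence of a smooth properly embedded copy of $\R$ in an arbitrary open Riemann surface---is indeed standard and is implicitly assumed by the paper as well.
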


Recall that the classical Plateau problem solved by Douglas and Rad\'o in  \cite{Douglas1932TAMS,Rado1930AM} consists on finding a minimal surface bounded by a given closed Jordan curve.
The next corollary provides an approximated solution to a certain Plateau problem when the curve is a \emph{divergent arc} in \( \R^n \).

\begin{corollary}\label{cor:plat}
	Let $\Rcal $ be a Riemann surface (without boundary), $D \subset \Rcal$ be a smoothly embedded (closed) disc, and a point in its boundary $p \in b D$. For every continuous divergent curve $\gamma \colon \R \to \R^n$, $n \geq 3,$ and every positive continuous function $\epsilon\colon \mathbb{R} \to \R^n$, there exists a complete conformal minimal immersion $X \colon \Rcal \setminus ( \mathring D \cup \{ p\}) \to \R^n$ and a parametrisation $\alpha \colon \R \to bD \setminus \{ p\} $ such that 
	\[ \| X(\alpha(t))- \gamma(t)\| < \epsilon(t),\quad \text{for every }t \in \R.\]
	Furthermore, if $n \geq 5$, then we can ensure that $X$ is an embedding.
	
\end{corollary}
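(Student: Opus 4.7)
The plan is to reduce Corollary \ref{cor:plat} to Theorem \ref{th:Carlemanminimal} applied to the punctured open Riemann surface $\Rcal' := \Rcal \setminus \{p\}$. In this way the boundary arc $bD \setminus \{p\}$ plays the role of the properly embedded copy of $\R$ on which the initial data for the Carleman theorem lives.

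First I would check that the smoothly embedded Jordan curve $bD \subset \Rcal$ becomes, after removal of the single point $p \in bD$, a smooth properly embedded image of $\R$ inside the open Riemann surface $\Rcal'$. Indeed, $bD \setminus \{p\}$ carries a smooth parametrisation $\alpha \colon \R \to bD \setminus \{p\} \subset \Rcal'$, and its properness is forced by the fact that both ends $\alpha(t)$ as $t \to \pm\infty$ accumulate only at the excluded point $p$ and therefore leave every compact subset of $\Rcal'$. This puts us exactly in the hypotheses of Theorem \ref{th:simpleminimal} and Theorem \ref{th:Carlemanminimal}.

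Next I would pull the data of the corollary back through $\alpha$: set $X_0 := \gamma \circ \alpha^{-1}$ and $\epsilon_0 := \epsilon \circ \alpha^{-1}$ on $bD \setminus \{p\}$, and apply Theorem \ref{th:Carlemanminimal} with $M = bD \setminus \{p\}$, $K = \emptyset$, $\Lambda = \emptyset$, zero flux morphism, initial map $X_0$ and tolerance $\epsilon_0$. This produces a complete conformal minimal immersion $\wt X \colon \Rcal' \to \R^n$ approximating $X_0$ to within $\epsilon_0$ on $bD \setminus \{p\}$. I would then define $X := \wt X|_{\Rcal \setminus (\mathring D \cup \{p\})}$; the approximation inequality at $q = \alpha(t)$ reads exactly $\|X(\alpha(t)) - \gamma(t)\| < \epsilon(t)$. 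Since $\Rcal \setminus (\mathring D \cup \{p\})$ is closed in $\Rcal'$, every divergent path in this subset is a divergent path in $\Rcal'$ and therefore has infinite Euclidean length under $\wt X$; completeness of $X$ follows.

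For the embedding statement when $n \geq 5$ I would additionally invoke assertions (iv) and (v) of Theorem \ref{th:Carlemanminimal}. The divergence of $\gamma$ says exactly that $\gamma \colon \R \to \R^n$ is proper, which together with the fact that $\alpha^{-1}$ is a homeomorphism shows that $X_0$ is proper; and $\Lambda = \emptyset$ makes the injectivity hypothesis of (iv) vacuous. We thus obtain a $\wt X$ that is additionally proper and injective. A proper injective continuous immersion into a Hausdorff space is a homeomorphism onto its image, and restriction to the closed subset $\Rcal \setminus (\mathring D \cup \{p\})$ preserves both properties; hence $X$ is a complete conformal minimal embedding. The single step requiring real care is the proper-embedding verification above: one must use both that $p \in bD$ and that $bD$ is an embedded Jordan curve to conclude that the two ends of $bD \setminus \{p\}$ accumulate only at $p$ and nowhere else in $\Rcal'$. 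Once this topological observation is in place the corollary is essentially a direct translation of Theorem \ref{th:Carlemanminimal}.
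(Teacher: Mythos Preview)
Your proposal is correct and follows essentially the same route as the paper's proof: pass to $\Rcal' = \Rcal \setminus \{p\}$, recognize $bD \setminus \{p\}$ as a properly embedded copy of $\R$ there (the paper cites Example~\ref{ex:Rembedded} for this), apply Theorem~\ref{th:Carlemanminimal} to $\gamma \circ \alpha^{-1}$ with error $\epsilon \circ \alpha^{-1}$, and then restrict. Your treatment of the $n \geq 5$ case is in fact more careful than the paper's, which simply asserts that Theorem~\ref{th:Carlemanminimal} yields an embedding; you correctly invoke properness via assertion~(v), using that the divergent curve $\gamma$ gives a proper initial datum, so that the resulting injective immersion is proper and hence a genuine embedding after restriction to the closed subset $\Rcal \setminus (\mathring D \cup \{p\})$.
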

\begin{proof}
	Let $\alpha\colon \R \to  b D \setminus \{ p\}$ be a parametrisation of the boundary $b D \setminus \{ p\}$.
	Then $\gamma \circ \alpha^{-1}\colon b D \setminus \{ p\} \to \R^n$  is a continuous function from the divergent arc $b D \setminus \{ p\} \subset \Rcal \setminus \{ p\}$ which we know to be a Carleman admissible set by Example \ref{ex:Rembedded}. Therefore using Theorem \ref{th:Carlemanminimal} we may approximate $\gamma \circ \alpha^{-1}$ along $b D \setminus \{ p\}$ by a conformal minimal immersion $X\colon\Rcal \setminus \{ p\} \to \R^n$ better than $\epsilon \circ \alpha^{-1}$. The restriction of $X$ to $\Rcal \setminus (\mathring D \cup \{ p\})$ yields the desired complete conformal minimal immersion. If $n \geq 5$, then by Theorem \ref{th:Carlemanminimal} we may ensure that $X$ is an embedding.
\end{proof}

Let \( M \) be a Riemann surface of finite conformal type with boundary $bM \not = \emptyset $, and let $\rcal$ be an open Riemann surface. 
Following \cite{Lopez2014JGA} we make the following definitions. 
A  conformal minimal immersion $X\colon \Rcal \to \R^n$ is said to pass $Y\colon M \to \R^n$ if there exist proper regions $\{D _j \}_{j \in \mathbb{N}}$ in $\Rcal$ and biholomorphisms $h_{j}\colon M \to D_j$, $j \in \mathbb{N},$ such that $\{X \circ h_j\}_{j \in \mathbb{N}}\rightarrow Y$ in the $\mathscr{C}^0(M)$-topology.
A conformal minimal immersion  $X\colon \Rcal \to \mathbb{R}^n$ is said to be universal if for every compact Riemann surface $M$ with non-empty boundary and any conformal minimal immersion $Y\colon M \to \R^n$, $X$ passes $Y$.

Lopez \cite{Lopez2014JGA} proved that there exist parabolic complete universal minimal surfaces with weak finite total curvature in $\R^3$.
Note that there is no universal minimal surface corresponding to $\C$ since the biholomorphisms  $h_j$ in general do not exist.
If we instead insist on a weaker local version of universality,  then we can prove that   each open Riemann surface $\Rcal$ has a weakly universal minimal surface $X\colon \Rcal \to \R^n$. More precisely, we say that $X\colon\Rcal \to \R^n$ is weakly universal if for every conformal minimal immersion $Y\colon\overline{\mathbb{D}}\to \R^n$, $X$ passes $Y$. Clearly universality implies weak universality, so existence of weakly universal minimal surfaces follows from Lopez \cite{Lopez2014JGA}. 
However, as far as the authors are aware, their existence for an arbitrary Riemann surface $\Rcal$ has not been investigated until now.

\begin{corollary}\label{cor:oneminimalsurfacetorulethemall}
	For each open Riemann surface $\Rcal$ and each natural number $n \geq 3$ there exists a complete weakly universal minimal surface $X\colon \Rcal \to \R^n$.
\end{corollary}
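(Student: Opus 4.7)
The plan is to realize $X$ as a Carleman approximation (via Theorem \ref{th:Carlemanminimal}) of a piecewise map that prescribes values from a countable dense family of disc immersions on pairwise disjoint discs diverging to infinity in $\Rcal$. A density argument will then upgrade the resulting dense approximation to weak universality.

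First I would observe that the space $\mathrm{CMI}(\overline{\D},\R^n)$ of conformal minimal immersions $\overline{\D}\to\R^n$, equipped with the uniform topology, is separable, being a subspace of the separable metric space $\Cscr(\overline{\D},\R^n)$. Fix a countable dense subset $\{Y_k\}_{k\in\N}\subset\mathrm{CMI}(\overline{\D},\R^n)$ and a surjection $\phi\colon\N\to\N$ each of whose fibers is infinite. Using Example \ref{ex:Rembedded}, choose a smoothly properly embedded image $M\subset\Rcal$ of $\R$, and place pairwise disjoint smoothly bounded closed discs $D_j\subset\Rcal$ at points of $M$ diverging to infinity, arranging that $\{D_j\}$ is locally finite, that $M$ meets each $bD_j$ transversely in zero or two points (and is otherwise disjoint from the other $D_i$), and that the resulting set $S:=M\cup\bigsqcup_j D_j$ is Carleman admissible in the sense of Definition \ref{def:Cadmissible}. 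By uniformisation, fix biholomorphisms $h_j\colon\overline{\D}\to D_j$ for each $j$.

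Next I would define a conformal minimal immersion $F\colon S\to\R^n$ by setting $F|_{D_j}:=Y_{\phi(j)}\circ h_j^{-1}$ on each disc and extending continuously along $M$ so as to respect Definition \ref{def:Simmersion}; such an extension exists by the discussion following Theorem \ref{th:easyCarleman}. Applying Theorem \ref{th:Carlemanminimal} with $\Lambda=\emptyset$, trivial flux $\pgot=0$, and a positive continuous weight $\epsilon\colon S\to\R_+$ satisfying $\epsilon<1/j$ on $D_j$ yields a complete conformal minimal immersion $X\colon\Rcal\to\R^n$ with $\|X-F\|<1/j$ uniformly on $D_j$ for every $j$. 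For weak universality, let $Y\colon\overline{\D}\to\R^n$ be arbitrary and $m\in\N$. Density provides $k_m$ with $\|Y_{k_m}-Y\|_\infty<1/m$, and the infinitude of $\phi^{-1}(k_m)$ lets us choose $j_m\in\phi^{-1}(k_m)$ with $j_m>m$. The biholomorphisms $h_{j_m}\colon\overline{\D}\to D_{j_m}$ map onto pairwise disjoint relatively compact divergent domains (proper regions in the sense of the corollary), and $\|X\circ h_{j_m}-Y\|_\infty \le \|X\circ h_{j_m}-Y_{k_m}\|_\infty + \|Y_{k_m}-Y\|_\infty < 1/j_m + 1/m \to 0$, so $X$ passes $Y$.

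The main obstacle will be the careful construction of the Carleman admissible set $S$. Bounded E-hulls are inherited from $M$ via Example \ref{ex:Rembedded} together with the compactness of the individual discs, but holomorphic convexity of every finite truncation $M\cup D_1\cup\cdots\cup D_N$ is the delicate point: one must choose each $D_j$ inside a small $\Oscr(\Rcal)$-convex coordinate chart crossed transversally by $M$, so that adjoining $D_j$ introduces no bounded complementary component. Everything else is a direct application of Theorem \ref{th:Carlemanminimal} combined with the separability argument.
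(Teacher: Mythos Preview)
Your proposal is correct and follows the same overall strategy as the paper: place pairwise disjoint coordinate discs $D_j$ diverging to infinity in $\Rcal$, prescribe on $D_j$ the $j$-th element of a countable dense family in $\mathrm{CMI}(\overline{\D},\R^n)$ (with infinite repetition), and apply Theorem~\ref{th:Carlemanminimal} with error $<1/j$ on $D_j$; the density argument then yields weak universality.

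The one notable difference is that you thread the discs along a properly embedded arc $M$ and take $S=M\cup\bigsqcup_j D_j$, whereas the paper dispenses with the arc entirely: it fixes a normal $\Oscr(\Rcal)$-convex exhaustion $\{K_j\}$, chooses each $D_j\subset \mathring K_j\setminus K_{j-1}$ to be $\Oscr(\Rcal)$-convex, and applies Theorem~\ref{th:Carlemanminimal} to the Carleman admissible set $K=\bigcup_j D_j$ with $M=\emptyset$. This makes holomorphic convexity and bounded E-hulls immediate (each finite union $D_1\cup\cdots\cup D_N\subset K_N$ is already $\Oscr(\Rcal)$-convex, and any compact $L$ meets only finitely many $D_j$), so the ``main obstacle'' you identify simply disappears. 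Your route works, but the extra arc and the gluing along it are unnecessary; the paper's placement via a normal exhaustion is the cleaner execution of the same idea.
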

\begin{proof}
	Let $\{K_{j}\}_{j \in \N}$ be a normal exhaustion of $\Rcal$ by $\Oscr(\Rcal)$-convex compact subsets or $\Rcal$. For each $j \in \mathbb{N}$ let $D_j \subset \mathring K_j \setminus K_{j-1}$ be a compact $\Oscr(\Rcal)$-convex subset that is mapped onto the closed unit disc $\overline{\mathbb{D}}$ by a coordinate chart $\varphi_j$ defined on a neighbourhood of $D_j$.
	Define \[h_j = \varphi^{-1}_j|_{\overline{\mathbb{D}}}\colon \overline{\mathbb{D}} \to D_j.\]
	Note that \( \bigcup_{j \in \N} D_j \) is holomorphically convex and has bounded \( E \) hulls.
	
	The space \( \Cscr(\overline \D,\R^n)\) is separable,  meaning it contains a countable dense subset,  and metrisable.
	Therefore, the subset \( \Fcal:=\text{CMI}(\overline{\mathbb{D}},\R^n) \subset \Cscr(\overline\D,\R^n)\) consisting of all conformal minimal surfaces \( \overline \D \to \R^n \) is also separable. For the notion of separability coincides with that of second countability for metrisable spaces, and the latter is hereditary.
	Let \( \{ f_j: j \in \N \}\subset \Fcal\) be a countable dense subset of \( \Fcal \).

	Define
	\[
	f \colon K= {\bigcup\limits_{j \in \N} \overline{D_j} } \to \R^n , \quad
	f(z)=f_j \circ h_{j}^{-1} \text{ for } z \in  D_j,
	\]
	and
	\[
		\varepsilon \colon K \to \R^+,\quad \varepsilon(z)= 1/j \text{ for } z \in D_j.
	\]
	By Theorem \ref{th:Carlemanminimal} we may find a complete conformal minimal immersion  \( X\colon \Rcal \to \R^n\) such that
	\[
	 \| X(z) - f(z) \| < \varepsilon(z), \quad \text{for \( z \in K \).}
	 \]
	
	Suppose \( Y \colon \overline \D \to \R^n \) and \( \epsilon > 0 \) are given.
	By Mergelyan's theorem for conformal minimal immersions we may suppose that \( Y \)  extends to a conformal minimal immersion on \( \R^2 \).
	Let \( B(Y, \epsilon/2) \subset \Fcal \) be the set of all conformal minimal immersions \( g \colon \overline \D \to \R^n \) such that \( \| Y(z) - g(z) \| < \epsilon /2\) for every \( z \in \overline \D \).
	Clearly this set is infinite.
	Moreover, note that there are infinitely many natural numbers \( j \in \N\) such that \( f_j \in B(Y,\epsilon/2) \) by density.
	Let \( j \in \N \) be a natural number  with \( f_j \in B(Y,\epsilon/2)\) which is sufficiently large so that
	\[
	\varepsilon(z)=1/j< \epsilon/2, \quad \text{	for \( z\in  D_j \).}
	\]

	Then,	for \( z \in \overline\D \) we have
	\begin{eqnarray*}
	\| X \circ h_j (z)- Y(z))\|& \leq& \| X\circ h_j (z) - f_j(z))\|  +\| f_j(z)-  Y(z)\|\\
	&=&\| X \circ h_j(z) - f\circ h_j (z)\| + \| f_j(z) - Y(z) \|\\
	& <& \epsilon/2 + \epsilon/2 = \epsilon.
	\end{eqnarray*}
	 It follows that we can construct a sequence $j_1, j_2, \dots$ such that $\{X \circ h_{j_k}\}_{k \in \mathbb{N}}\rightarrow Y$ in the $\mathscr{C}^0(M)$-topology, as required.

\end{proof}

\subsection*{Acknowledgements}
The first author's research is partially supported by the MINECO/FEDER grant no.\,MTM2017-89677-P, Spain.
The second author's research is supported by grant MR-39237 from ARRS, Republic of Slovenia, associated to the research program P1-0291 Analysis and Geometry.


{\bibliographystyle{abbrv} \bibliography{CarlemanApprox}}

\begin{thebibliography}{10}

\bibitem{AlarconCastro2018}
A.~Alarc\'{o}n and I.~Castro-Infantes.
\newblock Interpolation by conformal minimal surfaces and directed holomorphic
  curves.
\newblock {\em Anal. PDE}, 12(2):561--604, 2019.

\bibitem{AlarconForstneric2014IM}
A.~Alarc{\'o}n and F.~Forstneri{\v{c}}.
\newblock Null curves and directed immersions of open {R}iemann surfaces.
\newblock {\em Invent. Math.}, 196(3):733--771, 2014.

\bibitem{AlarconForstneric2015MA}
A.~Alarc{\'o}n and F.~Forstneri{\v{c}}.
\newblock The {C}alabi-{Y}au problem, null curves, and {B}ryant surfaces.
\newblock {\em Math. Ann.}, 363(3-4):913--951, 2015.

\bibitem{AlarconForstneric2015Survey}
A.~Alarc{\'o}n and F.~Forstneri{\v{c}}.
\newblock {\em Null holomorphic curves in $\mathbb{C}^3$ and the conformal
  {C}alabi-{Y}au problem}.
\newblock Complex Geometry and Dynamics (The Abel Symposium 2013, Forn\ae ss,
  J.\ E., Irgens, M., Wold, E.\ F., Eds.). Springer-Verlag, Berlin-Heidelberg,
  2015.

\bibitem{AlarconForstnericLopez2016MZ}
A.~Alarc{\'o}n, F.~Forstneri{\v{c}}, and F.~J. L{\'o}pez.
\newblock Embedded minimal surfaces in {$\Bbb{R}^n$}.
\newblock {\em Math. Z.}, 283(1-2):1--24, 2016.

\bibitem{AlarconForstneric2017Survey}
A.~Alarc\'{o}n and F.~Forstneri\v{c}.
\newblock N{ew} {complex} {analytic} {methods} {in} {the} {theory} {of}
  {minimal} {surfaces}: {A} {survey}.
\newblock {\em J. Aust. Math. Soc.}, 106(3):287--341, 2019.

\bibitem{AlarconLopez2012JDG}
A.~Alarc{\'o}n and F.~J. L{\'o}pez.
\newblock Minimal surfaces in {$\mathbb{R}^3$} properly projecting into
  {$\mathbb{R}^2$}.
\newblock {\em J. Differential Geom.}, 90(3):351--381, 2012.

\bibitem{Bishop1958PJM}
E.~Bishop.
\newblock Subalgebras of functions on a {R}iemann surface.
\newblock {\em Pacific J. Math.}, 8:29--50, 1958.

\bibitem{boivin1986}
A.~Boivin.
\newblock Carleman approximation on {R}iemann surfaces.
\newblock {\em Math. Ann.}, 275(1):57--70, 1986.

\bibitem{carleman1927}
T.~Carleman.
\newblock Sur un th\'{e}or\`{e}me de {W}eierstrass.
\newblock {\em Ark. Mat. Astr. Fys.}, 20B(4), 1927.

\bibitem{Chenoweth2018Carleman}
B.~Chenoweth.
\newblock Carleman approximation of maps into {O}ka manifolds.
\newblock {\em Proc. Amer. Math. Soc.}, 147(11):4847--4861, 2019.

\bibitem{Douglas1932TAMS}
J.~Douglas.
\newblock One-sided minimal surfaces with a given boundary.
\newblock {\em Trans. Amer. Math. Soc.}, 34(4):731--756, 1932.

\bibitem{Forsterbook}
O.~Forster.
\newblock {\em Lectures on {R}iemann surfaces}, volume~81 of {\em Graduate
  Texts in Mathematics}.
\newblock Springer-Verlag, New York, 1981.
\newblock Translated from the German by Bruce Gilligan.

\bibitem{forstneric2017}
F.~Forstneri{\v{c}}.
\newblock {\em Stein manifolds and holomorphic mappings. The homotopy principle
  in complex analysis (2nd edn)}, volume~56 of {\em Ergebnisse der Mathematik
  und ihrer Grenzgebiete. 3. Folge. A Series of Modern Surveys in Mathematics}.
\newblock Springer, Berlin, 2017.

\bibitem{ForstnericLarusson2011NY}
F.~Forstneri{\v{c}} and F.~L{\'a}russon.
\newblock Survey of {O}ka theory.
\newblock {\em New York J. Math.}, 17A:11--38, 2011.

\bibitem{gaier1987}
D.~Gaier.
\newblock {\em Lectures on complex approximation}.
\newblock Birkh\"{a}user Boston Inc., Boston, MA, 1987.
\newblock Translated from the German by Renate McLaughlin.

\bibitem{gardiner1995}
S.~J. Gardiner and M.~Goldstein.
\newblock Carleman approximation by harmonic functions.
\newblock {\em Amer. J. Math.}, 117(1):245--255, 1995.

\bibitem{gromov1989}
M.~Gromov.
\newblock Oka's principle for holomorphic sections of elliptic bundles.
\newblock {\em J. Amer. Math. Soc.}, 2(4):851--897, 1989.

\bibitem{hirsch1976}
M.~W. Hirsch.
\newblock {\em Differential topology}.
\newblock Springer-Verlag New York Inc., 1976.

\bibitem{JorgeXavier1980AM}
L.~P. d.~M. Jorge and F.~Xavier.
\newblock A complete minimal surface in {$\mathbb{R}^{3}$} between two parallel
  planes.
\newblock {\em Ann. of Math. (2)}, 112(1):203--206, 1980.

\bibitem{Lopez2014JGA}
F.~J. L{\'o}pez.
\newblock Exotic minimal surfaces.
\newblock {\em J. Geom. Anal.}, 24(2):988--1006, 2014.

\bibitem{magnusson2016}
B.~S. Magnusson and E.~F. Wold.
\newblock A characterization of totally real {C}arleman sets and an application
  to products of stratified totally real sets.
\newblock {\em Math. Scand.}, 118(2):285--290, 2016.

\bibitem{manne2011}
P.~E. Manne, E.~F. Wold, and N.~\O{}vrelid.
\newblock Holomorphic convexity and {C}arleman approximation by entire
  functions on {S}tein manifolds.
\newblock {\em Math. Ann.}, 351(3):571--585, 2011.

\bibitem{Mergelyan1951DAN}
S.~N. Mergelyan.
\newblock On the representation of functions by series of polynomials on closed
  sets.
\newblock {\em Doklady Akad. Nauk SSSR (N.S.)}, 78:405--408, 1951.

\bibitem{Nadirashvili1996IM}
N.~Nadirashvili.
\newblock Hadamard's and {C}alabi-{Y}au's conjectures on negatively curved and
  minimal surfaces.
\newblock {\em Invent. Math.}, 126(3):457--465, 1996.

\bibitem{nersesjan1971}
A.~A. Nersesjan.
\newblock Carleman sets.
\newblock {\em Izv. Akad. Nauk Armjan. SSR Ser. Mat.}, 6(6):465--471, 1971.

\bibitem{Osserman-book}
R.~Osserman.
\newblock {\em A survey of minimal surfaces}.
\newblock Dover Publications Inc., New York, second edition, 1986.

\bibitem{Rado1930AM}
T.~Rad{\'o}.
\newblock On {P}lateau's problem.
\newblock {\em Ann. of Math. (2)}, 31(3):457--469, 1930.

\bibitem{Runge1885AM}
C.~Runge.
\newblock Zur {T}heorie der {A}nalytischen {F}unctionen.
\newblock {\em Acta Math.}, 6(1):245--248, 1885.

\bibitem{stein1951}
K.~Stein.
\newblock Analytische {F}unktionen mehrerer komplexer {V}er\"{a}nderlichen zu
  vorgegebenen {P}eriodizit\"{a}tsmoduln und das zweite {C}ousinsche {P}roblem.
\newblock {\em Math. Ann.}, 123:201--222, 1951.

\end{thebibliography}

\end{document}